 \newcommand{\commentgray}[1]{}  
\DeclareMathOperator{\wilfoper}{W} 
\DeclareMathOperator{\eliahouoper}{E} 
\DeclareMathOperator{\multiplicityoper}{m} 
\DeclareMathOperator{\conductoroper}{c} 
\DeclareMathOperator{\genusoper}{genus} 
\DeclareMathOperator{\primitivesoper}{P} 
\DeclareMathOperator{\leftsoper}{L} 
\DeclareMathOperator{\qoper}{q} 
\DeclareMathOperator{\rhooper}{\rho} 
\DeclareMathOperator{\Dqoper}{D_{\qoper}} 
\newtheorem{theorem}{Theorem} 
\newtheorem{lemma}[theorem]{Lemma}
\newtheorem{corollary}[theorem]{Corollary}
\newtheorem{proposition}[theorem]{Proposition}
\newtheorem{conjecture}[theorem]{Conjecture}
\theoremstyle{remark} 
\newtheorem{example}[theorem]{Example}
\newtheorem{remark}[theorem]{Remark}
\newtheorem{question}[theorem]{Question}
\title{On a question of Eliahou and a conjecture of Wilf}
\author{Manuel Delgado}
\date{\today}
\address{CMUP, Departamento de Matem\'atica, Faculdade de
  Ci\^encias, Universidade do Porto, Rua do Campo Alegre 687, 4169-007 Porto,
  Portugal} 
\email{mdelgado@fc.up.pt} 
\thanks{The author was partially supported by CMUP (UID/MAT/00144/2013), which is funded by FCT (Portugal) with national (MCTES) and European structural funds (FEDER), under the partnership agreement PT2020, and also by the project MTM2014-55367-P.}
\begin{document}
\keywords{Numerical semigroup, Wilf conjecture}

\subjclass[2010]{20M14, 05A20, 11B75, 20--04}
\begin{abstract}
To a numerical semigroup~$S$, Eliahou associated a number $\eliahouoper(S)$ and proved that numerical semigroups for which the associated number is non negative satisfy Wilf's conjecture. The search for counterexamples for the conjecture of Wilf is therefore reduced to semigroups which have an associated negative Eliahou number. Eliahou mentioned $5$ numerical semigroups whose Eliahou number is $-1$. The examples were discovered by Fromentin who observed that these are the only ones with negative Eliahou number among the over $10^{13}$ numerical semigroups of genus up to $60$. We prove here that for any integer $n$ there are infinitely many numerical semigroups~$S$ such that $\eliahouoper(S)=n$, by explicitly giving families of such semigroups. We prove that all the semigroups in these families satisfy Wilf's conjecture, thus providing not previously known examples of semigroups for which the conjecture holds.
\end{abstract}

\maketitle

\section{Motivation}
This work was motivated by a recent paper by Eliahou~\cite{eliahou}, which constitutes a remarkable contribution towards a better understanding of Wilf's conjecture. Much about the conjecture of Wilf, so as references to the many papers devoted to the subject, can be found in Eliahou's paper.

To each numerical semigroup~$S$, Eliahou associated a number, which we denote by $\eliahouoper(S)$ (Eliahou's notation was $\wilfoper_0(S)$), and proposed as an interesting problem the characterization of the class ${\mathcal E}=\{S\mid \eliahouoper(S)< 0\}$ of numerical semigroups. Suggesting such a problem is not surprising, since he proved that the set $\{S\mid \eliahouoper(S)\ge 0\}$ consists of semigroups that satisfy Wilf's conjecture. Furthermore, he made use of that number to prove that all the numerical semigroups whose conductor is not bigger than the triple of the multiplicity satisfy Wilf's conjecture, which is a great result.

Eliahou observed that it seems to be very rare that a numerical semigroup belongs to ${\mathcal E}$, where, as observed, any possible counterexample to Wilf's conjecture must belong. Despite giving infinite families of semigroups in ${\mathcal E}$, we tend to agree with him. A (pseudo-) random search through all the numerical semigroups would hardly give an example. According to Eliahou, Fromentin discovered the first $5$ examples through exhaustive search among the numerical semigroups of genus up to $60$, which are more than $10^{13}$. This shows that the probability of a numerical semigroup taken at random from the set of numerical semigroups of genus up to $60$ to have negative Eliahou number is approximately $5\cdot 10^{-13}$.

Our strategy has been to do a pseudo-random search, but making some naive guesses that allowed us to highly reduce the search space or even to discard huge amounts of candidates without even looking at them. 
For some of the guesses we were able to sketch simple proofs that no candidate was wrongly rejected, for others not so simple and for many others we have not even convinced ourselves that a proof can be done. Notice that in our strategy this is not an important point (we are not even concerned with some kind of uniform distribution): once we find an example we can easily verify that it is in fact an example.

We observe that the $5$ available examples have been crucial for the initial guesses we made. Nevertheless, finding the first not known examples has not been an easy task, and crucial role was played by computational tools, either in terms of hardware (the search for examples used a large number of hours in several computers) or in terms of software (the \textsf{GAP}~\cite{GAP4} package \textsf{numericalsgps}~\cite{numericalsgps} has been used). 
After, having a large amount of examples at our disposal we looked for patterns. The \textsf{GAP}~\cite{GAP4} package \textsf{intpic}~\cite{intpic} played an important role in this part, by allowing us to have an automatic pictorial view of the semigroups.

A particularly simple family emerged and it is the main subject of the present paper. 
\medskip

\commentgray{rever}
\subsection*{Structure of the paper}\label{subsec:structure-paper}

The structure of the paper briefly follows. Besides some motivation and introducing some terminology, to which the first two sections are dedicated, the paper has several other sections.

Section~\ref{sec:Sp} is the heart of the paper. In it we show the existence of numerical semigroups with arbitrary large negative Eliahou number. To be precise, for each even positive integer $p$ we give a numerical semigroup $S(p)$ whose Eliahou number is $\frac{p}{2}(1-\frac{p}{2})$. 

In Section~\ref{sec:Sptau} we slightly modify $S(p)$, by adding a kind of remainder $\tau$. The numerical semigroup obtained is denoted $S(p,\tau)$. This construction leads us to conclude that every integer is the Eliahou number of a numerical semigroup.

Then, in Section~\ref{sec:Sijptau}, we give an infinite family of numerical semigroups $(S^{(i,j)}(p,\tau))_{i,j\in \mathbb N}$ whose Eliahou number is the same as the Eliahou number of $S(p,\tau)$. We thus conclude that for each given integer there are infinitely many numerical semigroups whose Eliahou number is that integer.
We prove that for any even positive integer $p$ and non negative integers $i,j$ and $\tau$, the semigroup $S^{(i,j)}(p,\tau)$ satisfies Wilf's conjecture.

In an appendix section we discuss a problem concerning the minimum possible genus of a numerical semigroup having a given Eliahou number.

In another appendix section we give a variety of examples, which essentially may be seen as a source of counter-examples. We explicitly state some remarks that are merely counter-examples to some questions that could be seen as natural. On the other hand, the examples given may help in the process of finding right questions to work on.

Throughout the paper we derive some interesting consequences of our results. As an example, we refer that for given integers $n$ and $N$, there are infinitely many numerical semigroups~$S$ such that $\eliahouoper(S)=n$ and $\wilfoper(S)>N$, as stated in Corollary~\ref{cor:large-Wilf}.


\section{Distinguished numbers, a convenient partition and figures}\label{sec:numbers-figures}

This section is dedicated to the introduction of some terminology and to fix some notation. Most of it is borrowed from Eliahou's paper~\cite{eliahou}. From the same paper we borrow a convenient partition of the integers.
For commonly used terminology and well known concepts we refer to a book of Rosales and García-Sánchez~\cite{NSbook}.
\subsection{Terminology and notation}\label{subsec:terminology}
Let~$S$ be a numerical semigroup.

The minimal generators of~$S$ are also known as \emph{primitive elements} of~$S$. The set of primitive elements of~$S$ is denoted $\primitivesoper(S)$. When~$S$ is understood, we usually simplify the notation and write simply $\primitivesoper$ instead of $\primitivesoper(S)$. This kind of simplification in the notation used is done for all the other invariants whose notation we now introduce.

The notation $\lvert X\rvert$ is used to denote the number of elements of a set $X$.
For instance, the cardinality of $\primitivesoper(S)$ (usually called the \emph{embedding dimension} of~$S$) is denoted $\lvert \primitivesoper(S)\rvert $ (or simply~$\lvert\primitivesoper\rvert$). 

The \emph{conductor} of~$S$ is the smallest integer in~$S$ from which all the larger integers belong to~$S$. It is denoted $\conductoroper(S)$ (or simply $\conductoroper$).

The \emph{multiplicity} of~$S$ is the least positive element of~$S$ and is denoted $\multiplicityoper(S)$ (or simply $\multiplicityoper$).

The set of \emph{left elements} of~$S$ consists of the elements of~$S$ that are smaller than $\conductoroper(S)$. It is denoted $\leftsoper(S)$ (or simply $\leftsoper$).
\commentgray{We observe that the concept of \emph{left elements} (introduced by Eliahou) differs from the concept of \emph{small elements}, which in addition contains the conductor, used in the numericalsgps package: the second one determines the numerical semigroup while the first does not.} 

The notation $\langle m,g_1,g_2,\ldots,g_r\rangle_{t}$ is used to represent the smallest numerical semigroup that contains $\{m,g_1,g_2,\ldots,g_r\}$ and all the integers greater than or equal to~$t$. (Here $t$ is not necessarily the conductor of the semigroup.)

The notation used for an interval of integers such as $\{t\in\mathbb{Z}\mid a\le t <b\}$ is $\left[a,b\right[$ (occasionally, $\left[a,b\right[\cap \mathbb{Z}$ is used). The set of non-negative integers is denoted by $\mathbb N$. 
\subsection{A convenient partition and figures}\label{subsec:partition-and-figures}
Let $\qoper(S)=\lceil \conductoroper(S)/\multiplicityoper(S)\rceil$ be the smallest integer greater than or equal to $\conductoroper(S)/\multiplicityoper(S)$. This number will frequently be called the \emph{$\qoper$-number} of~$S$ and denoted  simply by $\qoper$.
The interval of integers starting in $\conductoroper$ (the conductor) and having $\multiplicityoper$ (the multiplicity) elements is denoted $I_{\qoper}$, that is, 
$$I_{\qoper} = \left[\conductoroper, \conductoroper + \multiplicityoper\right[\cap \mathbb{Z}=\{z\in\mathbb{Z}\mid \conductoroper\le z < \conductoroper+\multiplicityoper\}.$$
For $j\in\mathbb{Z}$, denote $I_j$ the translate of $I_{\qoper}$ by $\left( j − \qoper \right)\multiplicityoper$, that is, $$I_j = I_{\qoper} + \left( j − \qoper \right)\multiplicityoper.$$

The sets $I_j$, with $j\in\mathbb{Z}$, form a partition of the integers that will be used throughout the paper.
Several figures will be presented to give pictorial views of numerical semigroups. Each picture in this paper (aiming to represent a numerical semigroup) consists of a rectangular $(\qoper+1)\times \multiplicityoper$-table such that the $j$-th row ($0\le j\le \qoper$) is the set $I_j$, and the entries corresponding to elements of the semigroup are somehow highlighted. 

We refer to row $i$ in the table (equivalently to the set $I_i$) as the \emph{$i$-th level}. When referring to columns, we keep using the term \emph{column}.

\begin{example}\label{ex:gens-11-13-21-62}
Figure~\ref{fig:gens-11-13-21-62} is a pictorial representation of the numerical semigroup $\langle 11,13,21,62 \rangle$. The elements of the semigroup are highlighted and, among them, the primitive elements and the conductor are emphasized. When an element is to be highlighted for more than one reason, gradient colours are used. 
\begin{figure}
\begin{center}
\includegraphics[scale=0.8]{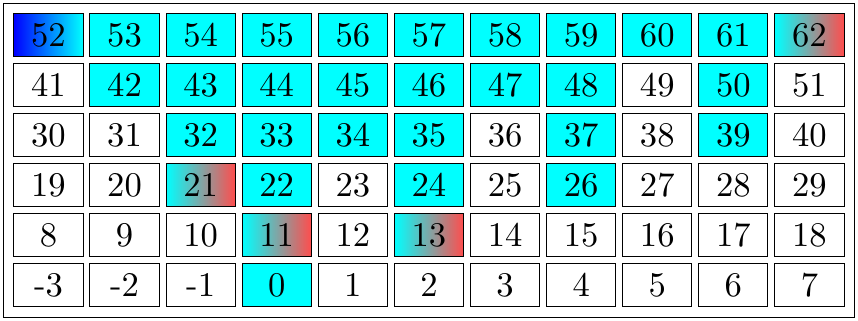}
\end{center}
\caption{Pictorial representation of the numerical semigroup $\langle 11,13,21,62 \rangle$. \label{fig:gens-11-13-21-62}}
\end{figure} 
\end{example}

We need some more notation:  $$\rhooper(S)=\qoper(S)\cdot \multiplicityoper(S)-\conductoroper(S).$$
As usual, most of the times we will simply use $\rho$, instead of $\rho(S)$.

For the semigroup considered in Example~\ref{ex:gens-11-13-21-62}, we have $\rho=3$, which is the number of columns to the left of $0$.
\subsection{Eliahou and Wilf's numbers}\label{subsec:wilf-eliahou-numbers}

To the numerical semigroup~$S$ one can associate the following number 
$$\wilfoper(S) = \lvert \primitivesoper \rvert\lvert \leftsoper \rvert-c,$$
which is called the \emph{Wilf number of~$S$}.
\smallskip

Wilf's conjecture~\cite{wilf78} may be stated as follows.
\begin{conjecture}[Wilf, 1978]\label{conj:wilf}
  Let~$S$ be a numerical semigroup. Then $\wilfoper(S)\ge 0$.
\end{conjecture}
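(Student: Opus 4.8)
Since this is Wilf's conjecture in full generality --- a statement that remains open --- any honest plan can only describe the line of attack I would pursue, not a complete argument. The one structural reduction already available is Eliahou's theorem, quoted above, that $\eliahouoper(S)\ge 0$ forces $\wilfoper(S)\ge 0$. So the first move is to discard that case entirely and concentrate on the class $\mathcal E=\{S\mid \eliahouoper(S)<0\}$, trying to establish $\wilfoper(S)=\lvert\primitivesoper\rvert\,\lvert\leftsoper\rvert-\conductoroper\ge 0$ directly for every $S\in\mathcal E$. Everything then depends on extracting enough of a lower bound on the product $\lvert\primitivesoper\rvert\,\lvert\leftsoper\rvert$ to dominate the conductor.

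The combinatorial tool for this is the level partition $\{I_j\}$ introduced above, with $\qoper=\lceil\conductoroper/\multiplicityoper\rceil$ and $\rhooper=\qoper\multiplicityoper-\conductoroper$. I would count the left elements level by level: the left elements are exactly the nongaps lying in levels $0$ through $\qoper-1$, and each such level is an interval of $\multiplicityoper$ consecutive integers. Writing $n_j$ for the number of nongaps in level $I_j$ and $p_j$ for the number of primitives there, one has $\lvert\leftsoper\rvert=\sum_{0\le j<\qoper}n_j$ and $\lvert\primitivesoper\rvert=\sum_{j\ge 0}p_j$, while $\conductoroper=\qoper\multiplicityoper-\rhooper$. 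The strategy is to show that a level carrying few nongaps must force many primitives elsewhere --- a level that is nearly full of gaps cannot be reached by sums of small left elements unless new generators appear --- thereby turning sparsity of $\leftsoper$ into abundance of $\primitivesoper$ and keeping the product above $\conductoroper$. This is essentially the bookkeeping that yields Eliahou's bound when $\qoper\le 3$, and the plan would be to push the same accounting to arbitrary $\qoper$.

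The main obstacle is exactly where the rest of this paper lives. The class $\mathcal E$ is nonempty, and in the sections that follow it is shown to contain families with $\eliahouoper$ arbitrarily negative; hence Eliahou's number supplies no uniform reserve of slack one could simply add back to reach $\wilfoper\ge 0$. Any successful argument must therefore locate a second, independent nonnegative invariant that compensates a negative $\eliahouoper(S)$ in the Wilf count, and no such invariant is known in general. For this reason I expect the level accounting to succeed only under structural hypotheses that control how gaps distribute across the high levels --- as in the families $S^{(i,j)}(p,\tau)$ treated later --- rather than to settle the conjecture outright; the general case is, to my knowledge, beyond reach by these methods, which is precisely why the paper restricts its unconditional claims to explicit families.
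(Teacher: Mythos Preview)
Your assessment is exactly right and matches the paper: the statement is Wilf's \emph{conjecture}, not a theorem, and the paper offers no proof of it. It is quoted only to motivate the Eliahou number and the subsequent constructions; the paper's unconditional results are confined to the explicit families $S^{(i,j)}(p,\tau)$, as you anticipate in your final paragraph. There is therefore no proof in the paper to compare your proposal against, and your decision to describe a line of attack rather than a purported argument is the correct response.
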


The non primitive elements of $S$ are called \emph{decomposable}. The decomposable elements at level $\qoper$, $I_{\qoper}\setminus P$, are of particular importance and deserve the introduction of a notation. We write $\Dqoper$ for $I_{\qoper}\setminus P$.

We refer to the number 
$$\eliahouoper(S) = \lvert P\cap L\rvert\lvert \leftsoper \rvert - {\qoper} \lvert \Dqoper\rvert +\rho$$
as the \emph{Eliahou number of~$S$}.

Eliahou~\cite{eliahou} proved the following result, which relates $\eliahouoper(S)$ to the conjecture of Wilf.
\begin{proposition}\label{prop:non-negative-eliahou-implies-wilf}
If $\eliahouoper(S)\ge 0$, then $\wilfoper(S)\ge 0$.
\end{proposition}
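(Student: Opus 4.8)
The plan is to show directly that $\eliahouoper(S)\ge 0$ forces $\wilfoper(S)\ge 0$ by comparing the two numbers term by term. Recall that $\wilfoper(S)=\lvert\primitivesoper\rvert\lvert\leftsoper\rvert-\conductoroper$ while $\eliahouoper(S)=\lvert\primitivesoper\cap\leftsoper\rvert\lvert\leftsoper\rvert-\qoper\lvert\Dqoper\rvert+\rho$. Since $\conductoroper=\qoper\multiplicityoper-\rho$ and $I_{\qoper}$ has exactly $\multiplicityoper$ elements, one can write $\conductoroper=\qoper\lvert I_{\qoper}\rvert-\rho=\qoper\bigl(\lvert I_{\qoper}\cap\primitivesoper\rvert+\lvert\Dqoper\rvert\bigr)-\rho$. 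Substituting this into $\wilfoper(S)$ and subtracting $\eliahouoper(S)$, the terms $\qoper\lvert\Dqoper\rvert$ and $\rho$ cancel, and what remains is
\[
\wilfoper(S)-\eliahouoper(S)=\bigl(\lvert\primitivesoper\rvert-\lvert\primitivesoper\cap\leftsoper\rvert\bigr)\lvert\leftsoper\rvert-\qoper\,\lvert I_{\qoper}\cap\primitivesoper\rvert.
\]
So the whole proposition reduces to establishing that the right-hand side is non-negative; then $\wilfoper(S)=\eliahouoper(S)+(\text{non-negative})\ge 0$.

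The key combinatorial step is to understand $\lvert\primitivesoper\rvert-\lvert\primitivesoper\cap\leftsoper\rvert$, i.e.\ the number of primitive elements that lie at or beyond the conductor. Every element of $S$ that is $\ge\conductoroper$ lies in some level $I_j$ with $j\ge\qoper$; but a minimal generator $x\ge\conductoroper+\multiplicityoper$ would satisfy $x-\multiplicityoper\in S$ (as everything past $\conductoroper$ is in $S$) and $x-\multiplicityoper\ge\conductoroper$, contradicting minimality — unless... actually more carefully, $x-\multiplicityoper$ is in $S$ and $\multiplicityoper$ is in $S$, so $x=\multiplicityoper+(x-\multiplicityoper)$ is decomposable. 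Hence all primitive elements outside $\leftsoper$ lie in $I_{\qoper}$ itself, which gives $\lvert\primitivesoper\rvert-\lvert\primitivesoper\cap\leftsoper\rvert=\lvert I_{\qoper}\cap\primitivesoper\rvert$. Plugging this in, the expression to be shown non-negative becomes $\lvert I_{\qoper}\cap\primitivesoper\rvert\bigl(\lvert\leftsoper\rvert-\qoper\bigr)$, so it suffices to check $\lvert\leftsoper\rvert\ge\qoper$.

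The inequality $\lvert\leftsoper\rvert\ge\qoper$ is the heart of the matter and I expect it to be the main obstacle (it is essentially a small piece of Wilf's conjecture itself, valid unconditionally). The argument is that $\leftsoper$ contains $0$ together with at least one element of $S$ strictly inside each of the nonzero levels $I_1,\dots,I_{\qoper-1}$: indeed if some level $I_k$ with $1\le k\le\qoper-1$ contained no element of $S$, then by the Apéry-set / "lift" structure one gets a gap arbitrarily far out, contradicting that $I_{\qoper}$ starts the conductor — more precisely, for each $k<\qoper$ the element $k\multiplicityoper\in S$ and $k\multiplicityoper<\qoper\multiplicityoper$; one must check $k\multiplicityoper<\conductoroper$, which holds because $k\le\qoper-1$ gives $k\multiplicityoper\le(\qoper-1)\multiplicityoper=\qoper\multiplicityoper-\multiplicityoper\le\conductoroper+\rho-\multiplicityoper<\conductoroper$ using $\rho<\multiplicityoper$. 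Thus $0,\multiplicityoper,2\multiplicityoper,\dots,(\qoper-1)\multiplicityoper$ are $\qoper$ distinct elements of $\leftsoper$, giving $\lvert\leftsoper\rvert\ge\qoper$ and completing the proof. (If $\rho=0$ a separate trivial check handles the boundary, since then $\conductoroper=\qoper\multiplicityoper$ and $(\qoper-1)\multiplicityoper<\conductoroper$ still holds.)
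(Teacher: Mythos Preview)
Your argument is correct. The paper does not actually supply a proof of this proposition; it is quoted as a result of Eliahou~\cite{eliahou} and left without proof. So there is nothing to compare against in the present paper, but your self-contained derivation is sound.

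A couple of minor remarks on presentation. The identity $\lvert I_{\qoper}\rvert=\lvert I_{\qoper}\cap\primitivesoper\rvert+\lvert\Dqoper\rvert$ is legitimate precisely because every element of $I_{\qoper}$ lies in $S$ (they are all $\ge\conductoroper$), so $I_{\qoper}$ splits cleanly into primitives and decomposables; you use this implicitly and it is worth stating. In the inequality chain for $(\qoper-1)\multiplicityoper<\conductoroper$, you actually have equality at the step $\qoper\multiplicityoper-\multiplicityoper=\conductoroper+\rho-\multiplicityoper$, and then $\rho<\multiplicityoper$ (which follows from $\qoper=\lceil\conductoroper/\multiplicityoper\rceil$) gives the strict inequality; the parenthetical about $\rho=0$ is therefore redundant, since $0<\multiplicityoper$ is already covered. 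Finally, the digression beginning ``indeed if some level $I_k$ \dots\ contained no element of $S$'' is a false start that you immediately abandon in favour of the direct observation that the multiples $0,\multiplicityoper,\dots,(\qoper-1)\multiplicityoper$ lie in $\leftsoper$; in a clean write-up just drop the false start.
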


\subsection{Fromentin-Eliahou examples}\label{subsec:fromentin}
The \emph{genus} of a numerical semigroup~$S$ is $\lvert\mathbb{N}\setminus S\rvert$.
The only numerical semigroups with negative Eliahou number among the over $10^{13}$ numerical semigroups of genus up to $60$ have been discovered by Fromentin through an exhaustive search. These semigroups, which are listed in Eliahou's paper~\cite{eliahou}, are: 
$\langle14,22,23\rangle_{56}$, $\langle16,25,26\rangle_{64}$, $\langle17,26,28\rangle_{68}$, $\langle17,27,28\rangle_{68}$ and $\langle18,28,29\rangle_{72}$. Figure~\ref{fig:fromentin-sgps} represents all of them.
Three of these numerical semigroups appear in the families we will be considering. Using names to be introduced, those semigroups are:  $S(4)=\langle14,22,23\rangle_{56}$, $S^{(0,1)}(4,0)=\langle16,25,26\rangle_{64}$ and $S^{(0,2)}(4,0)=\langle18,28,29\rangle_{72}$.
\begin{figure}
\includegraphics[scale=0.8]{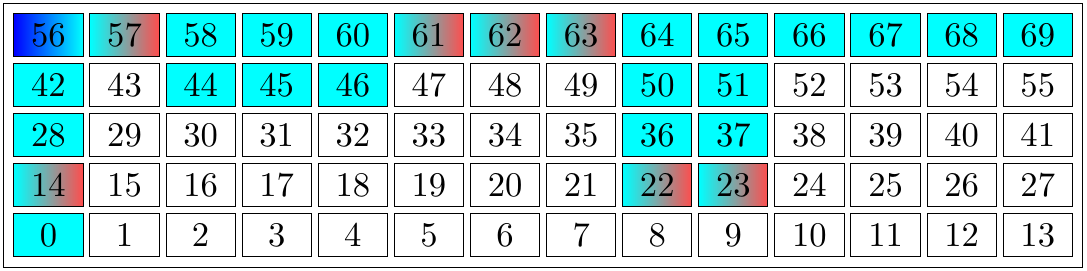}
\includegraphics[scale=0.8]{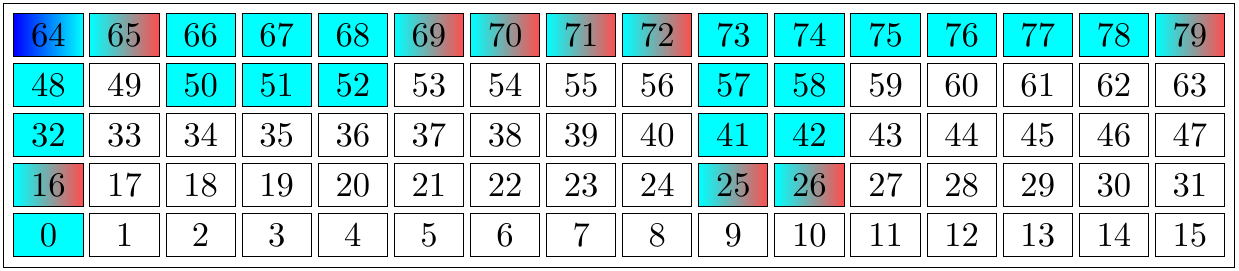}
\includegraphics[scale=0.8]{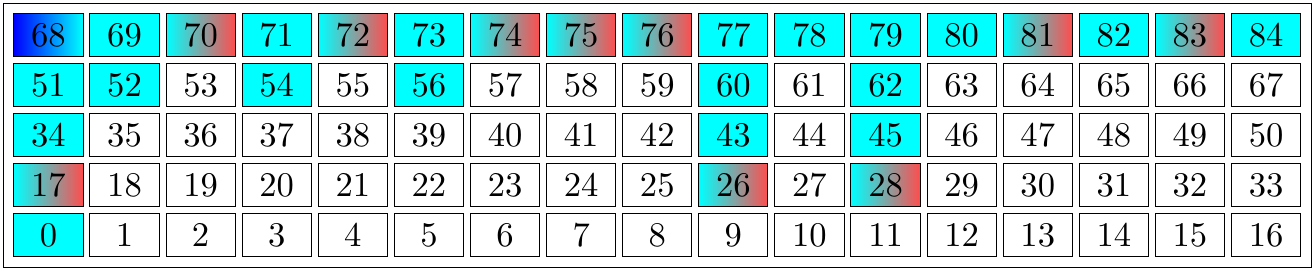}
\includegraphics[scale=0.8]{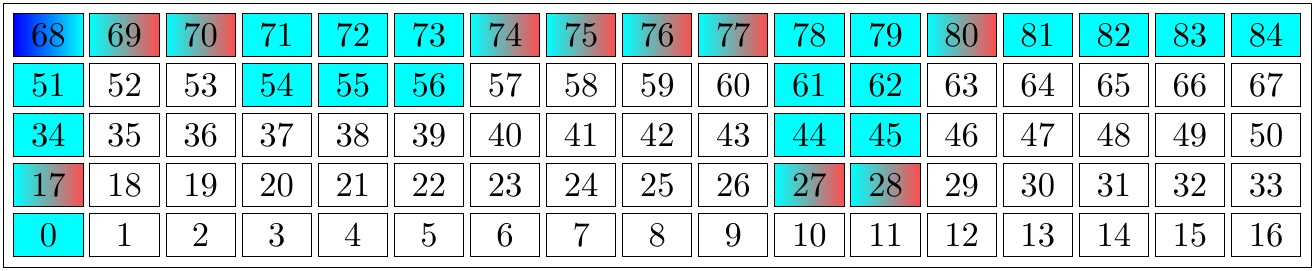}
\includegraphics[scale=0.8]{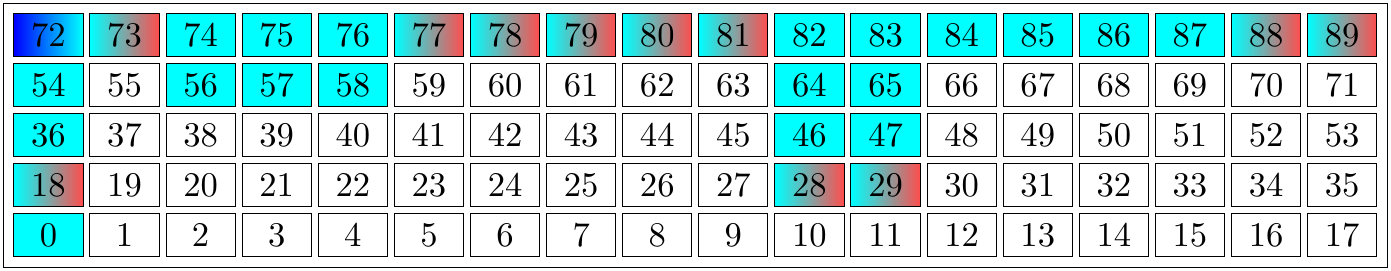}
\caption{Pictorial representation of the Fromentin-Eliahou examples. \label{fig:fromentin-sgps}}
\end{figure} 


\section{The Eliahou number can be arbitrarily large negative}\label{sec:Sp}
Throughout this section (in fact, through all the paper), $p$ is an \emph{even} positive integer. We remark that $p$ is related to the $\qoper$-number of the semigroups to be constructed and that we do not have analogous constructions for odd~$p$'s.

We associate to $p$ a numerical semigroup, $S(p)$, which has $3$ primitive left elements. The main aim of the section is to prove Theorem~\ref{th:wo_Sp}, which gives a formula for $\eliahouoper(S(p))$.

In Section~\ref{subsec:table-Sp} we collect in a table some numbers related to $S(p)$.
The last subsection gives an alternative (and much more visual) way to look at the numerical semigroups $S(p)$. It is very useful for modifications that lead to the construction of other families of semigroups to be considered in subsequent sections. 
\subsection{Defining the numerical semigroup $S(p)$}\label{subsec:defining-Sp}

To an even positive integer $p$, we associate a numerical semigroup, $S(p)$. To this end, we consider integers $\mu(p)$ and $\gamma(p)$, depending on $p$, defined as follows:
\begin{align}
\mu(p) &= \frac{p^2}{4}+2p+2= \frac{p}{2}\left(\frac{p}{2} +4\right)+2;\nonumber\\
\gamma(p) &= 2\mu(p) - \left(\frac{p}{2} +4\right).\nonumber
\end{align}
These integers will be used throughout the paper. We will generally write~$\mu$ instead of~$\mu(p)$ and~$\gamma$ instead of~$\gamma(p)$, since there will be no risk of confusion.

The smallest numerical semigroup containing $\{\mu,\gamma,\gamma+1\}$ and all the integers greater than or equal to~$p\mu$ plays a fundamental role in this paper. We use the notation:
$$S(p)=\langle \mu,\gamma,\gamma+1\rangle_{p\mu}.$$

  The interval $\left[ 0,(p+1)\mu \right[$ contains all the left elements and all the primitive elements of the numerical semigroup $S(p)$; it contains the important part of the semigroup.

One of the facts that we shall prove is that $\conductoroper(S(p))=p\mu$ (Corollary~\ref{cor:conductor-Sp}). Thus, the pictorial representation of the semigroup $S(p)$ consists of the integers in the interval $\left[ 0,(p+1)\mu \right[$, which are disposed in a rectangular table as described in Page~\pageref{subsec:partition-and-figures}.
\medskip

The semigroup $S(2)=\langle 7,9,10\rangle_{14}$ is pictorially represented in Figure~\ref{fig:2-0-1}.
\begin{figure}
\begin{center}
\includegraphics[scale=0.75]{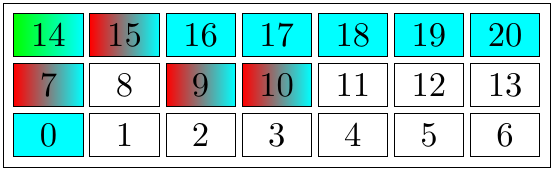}
\end{center}
\caption{The numerical semigroup $S(2)$. \label{fig:2-0-1}}
\end{figure} 

A pictorial representation of $S(4)=\langle14,22,23\rangle_{56}$ was already given in Figure~\ref{fig:fromentin-sgps}.

Figure~\ref{fig:Sp-shapes} (obtained by taking $p=10$) is intended to illustrate the general shape of a semigroup of the form $S(p)$ and may be useful to follow the proofs in the present section. 
\begin{figure}
\includegraphics[width=\textwidth]{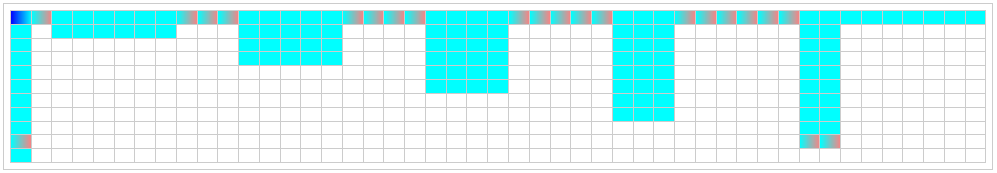}
\caption{Shape of a numerical semigroup of the form $S(p)$. \label{fig:Sp-shapes}}
\end{figure} 
Similar images are given along the paper. The reader may want to take a quick look to those of Figure~\ref{fig:Sp-shadowedshapes}, given in Section~\ref{subsec:visual-Sp}. 

\subsection{Some relations between multiples of $\mu$ and $\gamma$}\label{subsec:pic-rep-Sprel-mu-gamma}
The following technical results give simple relations between multiples of the integers $\mu$ and $\gamma$.
\begin{lemma}\label{lemma:relationgammaMu}
 $\gamma>2\left(1-\frac{1}{p}\right)\mu$.
\end{lemma}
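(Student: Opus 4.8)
The plan is to reduce this inequality to a polynomial inequality in $p$ and check it directly, since both $\gamma$ and $\mu$ are explicit polynomials in $p$. Recall that $\mu = \frac{p^2}{4}+2p+2$ and $\gamma = 2\mu - \left(\frac{p}{2}+4\right)$, so $\gamma - 2\left(1-\frac{1}{p}\right)\mu = \gamma - 2\mu + \frac{2\mu}{p} = \frac{2\mu}{p} - \left(\frac{p}{2}+4\right)$. Thus the claimed inequality is equivalent to $\frac{2\mu}{p} > \frac{p}{2}+4$, i.e. to $2\mu > p\left(\frac{p}{2}+4\right) = \frac{p^2}{2}+4p$.

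Now I would just substitute the formula for $\mu$: $2\mu = \frac{p^2}{2}+4p+4$, so the inequality $2\mu > \frac{p^2}{2}+4p$ becomes $\frac{p^2}{2}+4p+4 > \frac{p^2}{2}+4p$, that is $4>0$, which is obviously true. So the whole statement collapses to a triviality once the definitions are unwound; there is essentially no obstacle. The one thing to be careful about is that multiplying/dividing by $p$ preserves the direction of the inequality, which is fine because $p$ is a positive integer.

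If one prefers a cleaner presentation, I would phrase it as a short chain of equivalences: starting from $\gamma > 2\left(1-\frac{1}{p}\right)\mu$, rewrite the right-hand side using $2\left(1-\frac1p\right)\mu = 2\mu - \frac{2\mu}{p}$ and the definition $\gamma = 2\mu - \left(\frac p2 + 4\right)$ to get the equivalent inequality $\frac{2\mu}{p} > \frac p2 + 4$, then clear the denominator and plug in $\mu$. The main (and only) point worth stating is that the surplus "$+2$" in the definition of $\mu$ is exactly what makes $2\mu$ exceed $\frac{p^2}{2}+4p$, with room to spare (by $4$). I expect the author's proof to be a one- or two-line computation of essentially this form.
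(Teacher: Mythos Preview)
Your proof is correct and essentially identical to the paper's: both arguments reduce to the observation that $\frac{p}{2}+4=\frac{2}{p}\cdot\frac{p}{2}\bigl(\frac{p}{2}+4\bigr)<\frac{2}{p}\mu$, which is exactly the ``$+2$'' surplus you identified. The paper presents it as a short chain of (in)equalities rather than clearing the denominator, but the content is the same.
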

\begin{proof}
We can write the following inequalities, which are clear.
\begin{align} 
\gamma &= 2\mu - \left(\frac{p}{2} +4\right)=2\mu - \frac{2}{p}\cdot\frac{p}{2}\left(\frac{p}{2} +4\right)\nonumber\\
&>2\mu - \frac{2}{p}\left[\frac{p}{2}\left(\frac{p}{2} +4\right)+2\right]=2\left(1-\frac{1}{p}\right)\mu.\nonumber \qedhere
\end{align}
\end{proof}
\begin{lemma}\label{lemma:compareGammaMu}
Let $r$ be an integer such that $0\le r\le \frac{p}{2}$. Then  $\left(\frac{p}{2}+r\right)\gamma > \left(p+2r-2\right)\mu$.
\end{lemma}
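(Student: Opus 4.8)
The plan is to reduce the stated inequality to an elementary polynomial inequality in $p$ and $r$ and then use the hypothesis $r\le \frac{p}{2}$. The first step is to substitute the defining relation $\gamma = 2\mu - \left(\frac{p}{2}+4\right)$ into the left-hand side:
\[
\left(\tfrac{p}{2}+r\right)\gamma = \left(p+2r\right)\mu - \left(\tfrac{p}{2}+r\right)\left(\tfrac{p}{2}+4\right).
\]
Subtracting $\left(p+2r-2\right)\mu$ from both sides, the claim $\left(\frac{p}{2}+r\right)\gamma > \left(p+2r-2\right)\mu$ becomes equivalent to the single inequality
\[
2\mu > \left(\tfrac{p}{2}+r\right)\left(\tfrac{p}{2}+4\right).
\]

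The second step is to check this last inequality. Using $\mu = \frac{p}{2}\left(\frac{p}{2}+4\right)+2$ and writing $k=\frac{p}{2}$ for brevity, it reads $2k(k+4)+4 > (k+r)(k+4)$, which after rearranging is $(k+2)^2 > r(k+4)$; and this holds because $r\le k$ forces $r(k+4)\le k(k+4)=k^2+4k < (k+2)^2$. Alternatively, one can avoid re-expanding $\mu$ and instead feed Lemma~\ref{lemma:relationgammaMu} directly: multiplying the strict inequality $\gamma > 2\left(1-\frac{1}{p}\right)\mu$ by the positive quantity $\frac{p}{2}+r$ yields
\[
\left(\tfrac{p}{2}+r\right)\gamma > \frac{(p+2r)(p-1)}{p}\,\mu = \left(p+2r-2 + \frac{p-2r}{p}\right)\mu,
\]
and $\frac{p-2r}{p}\ge 0$ precisely because $r\le\frac{p}{2}$, so the right-hand side is at least $\left(p+2r-2\right)\mu$.

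I do not expect any genuine obstacle here; this is a routine estimate. The only points requiring a little care are keeping the direction of the inequality correct when multiplying through by $\frac{p}{2}+r$ (legitimate since $p>0$ and $r\ge 0$, and still strict because $\mu>0$), and making sure the final elementary bound is strict — which it is, since $r(k+4)\le k(k+4)$ sits strictly below $(k+2)^2$.
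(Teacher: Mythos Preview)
Your proposal is correct. In fact your \emph{alternative} argument is exactly the paper's proof: the paper multiplies the inequality of Lemma~\ref{lemma:relationgammaMu} by $\frac{p}{2}+r$, simplifies to $\left(p+2r-1-\frac{2r}{p}\right)\mu$, and then uses $0\le \frac{2r}{p}\le 1$; your rewriting of the coefficient as $p+2r-2+\frac{p-2r}{p}$ is the same identity reorganised.

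Your \emph{first} argument is a slightly different and more self-contained route: you substitute $\gamma=2\mu-\left(\frac{p}{2}+4\right)$ directly and reduce everything to the single elementary inequality $(k+2)^2>r(k+4)$ with $k=\frac{p}{2}$, which follows immediately from $r\le k$. This bypasses Lemma~\ref{lemma:relationgammaMu} entirely and is arguably cleaner, at the small cost of re-expanding $\mu$. Either way the content is the same routine estimate, and your care about strictness and the sign of $\frac{p}{2}+r$ is well placed.
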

\begin{proof}
Using Lemma~\ref{lemma:relationgammaMu}, we can write, for any integer $r$,  $$\left(\frac{p}{2}+r\right)\gamma > 2\left(\frac{p}{2}+r\right)\left(1-\frac{1}{p}\right)\mu = \left(p+2r-1-\frac{2r}{p}\right)\mu.$$

Since we are assuming that $0\le r\le \frac{p}{2}$, we have that $0\le \frac{2r}{p} \le 1$, and therefore $$p+2r-1-\frac{2r}{p}\ge p+2r-2,$$ which completes the proof.
\end{proof}
Of special interest are the cases $r=1$ and $r=2$. These are the subject of the following corollary.
\begin{corollary}\label{cor:compareGammaMu} 
The following inequalities hold.
\begin{align}
&\left(\frac{p}{2}+1\right)\gamma > p\mu;\label{line1:cor:compareGammaMu}\\
&\left(\frac{p}{2}+2\right)\gamma > (p+1)\mu.\label{line2:cor:compareGammaMu}
\end{align}
\end{corollary}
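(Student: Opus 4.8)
The plan is to obtain both inequalities as direct instances of Lemma~\ref{lemma:compareGammaMu}, which is exactly the general statement of which the corollary is the special case. For~\eqref{line1:cor:compareGammaMu} I would set $r=1$: the hypothesis $0\le r\le \frac p2$ holds since $p$ is a positive (even) integer, so $p\ge 2$ and hence $1\le \frac p2$; then Lemma~\ref{lemma:compareGammaMu} gives $\left(\frac p2+1\right)\gamma > (p+2\cdot 1-2)\mu = p\mu$. For~\eqref{line2:cor:compareGammaMu} I would set $r=2$: here one needs $2\le\frac p2$, i.e.\ $p\ge 4$. For $p\ge 4$ the inequality is immediate from the lemma with $r=2$, giving $\left(\frac p2+2\right)\gamma > (p+2\cdot 2-2)\mu = (p+1)\mu$.

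The only genuine wrinkle is the boundary case $p=2$ in~\eqref{line2:cor:compareGammaMu}, where $r=2$ falls outside the range $0\le r\le\frac p2=1$ and so Lemma~\ref{lemma:compareGammaMu} does not apply. I would handle this separately by a direct computation: for $p=2$ we have $\mu=\mu(2)=\frac{4}{4}+4+2=7$ and $\gamma=\gamma(2)=2\cdot 7-(1+4)=9$, so $\left(\frac p2+2\right)\gamma = 3\cdot 9 = 27$ while $(p+1)\mu = 3\cdot 7 = 21$, and indeed $27>21$. Thus~\eqref{line2:cor:compareGammaMu} holds for $p=2$ as well, and the proof is complete for all even positive~$p$.

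I expect essentially no obstacle here; the corollary is deliberately a packaging of Lemma~\ref{lemma:compareGammaMu} for the two values of $r$ that will be used later, and the statement of that lemma is tailored so that $r=1$ needs no side condition while $r=2$ needs only $p\ge 4$. The one thing to be careful about is precisely not to invoke the lemma for $p=2$ with $r=2$; checking that small case by hand (as above) is the cleanest fix, and it is a one-line calculation rather than anything structural.
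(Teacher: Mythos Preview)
Your proposal is correct and matches the paper's proof essentially line for line: apply Lemma~\ref{lemma:compareGammaMu} with $r=1$ and $r=2$, noting that $r=2$ only falls within range when $p\ge 4$, and then verify the case $p=2$ of~\eqref{line2:cor:compareGammaMu} directly using $\mu(2)=7$, $\gamma(2)=9$. There is nothing to add.
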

\begin{proof}
Inequality~(\ref{line1:cor:compareGammaMu}) is an immediate consequence of Lemma~\ref{lemma:compareGammaMu}. If $p\ge 4$, then the same happens for Inequality~(\ref{line2:cor:compareGammaMu}).
For $p=2$, one can check that Inequality~(\ref{line2:cor:compareGammaMu}) also holds. In fact, as $\gamma(2)=9$ and $\mu(2)=7$, we get $27$ for the left side and $21$ for the right side.
\end{proof}

\subsection{The subsemigroup $\langle \gamma,\gamma+1\rangle$}\label{subsec:gammagammap1}
We take a close look at the subsemigroup $\langle \gamma,\gamma+1\rangle$ of $S(p)$ (which is a numerical subsemigroup, since $\gamma$ and $\gamma+1$ are coprime).
We start by stating the following observation.
\begin{lemma}\label{lemma:elts-gammagammap1}
$\langle \gamma,\gamma+1\rangle = \left\{i\gamma +j\mid i,j\in \mathbb{N} \mbox{ and } 0\le j\le i\right\}.$
\end{lemma}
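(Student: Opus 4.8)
The plan is to prove the set equality $\langle \gamma,\gamma+1\rangle = \{i\gamma+j \mid i,j\in\mathbb{N},\ 0\le j\le i\}$ by double inclusion, exploiting the fact that an arbitrary element of $\langle\gamma,\gamma+1\rangle$ is a nonnegative combination $a\gamma+b(\gamma+1)$ with $a,b\in\mathbb{N}$.

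For the inclusion $\subseteq$, I would start from a generic element $a\gamma+b(\gamma+1)$ and rewrite it as $(a+b)\gamma+b$. Setting $i=a+b$ and $j=b$, we have $i,j\in\mathbb{N}$ and clearly $j=b\le a+b=i$, so the element has the required form. This direction is essentially immediate.

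For the reverse inclusion $\supseteq$, take $i\gamma+j$ with $0\le j\le i$. The trick is to write $i\gamma+j = (i-j)\gamma + j(\gamma+1)$; since $j\le i$ we have $i-j\ge 0$, and $j\ge 0$, so both coefficients are nonnegative integers, exhibiting $i\gamma+j$ as an element of $\langle\gamma,\gamma+1\rangle$. This completes the proof.

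Honestly, there is no real obstacle here: the statement is a direct unpacking of what it means to be generated by two consecutive integers $\gamma$ and $\gamma+1$, and the only ``idea'' needed is the reparametrization $a\gamma+b(\gamma+1)=(a+b)\gamma+b$ (and its inverse). The one small thing worth a sentence is noting that every element of the generated numerical semigroup is of the form $a\gamma+b(\gamma+1)$ with $a,b\in\mathbb{N}$ — i.e.\ no ``higher'' generators or a conductor cutoff are involved, since $\langle\gamma,\gamma+1\rangle$ here denotes the ordinary submonoid of $\mathbb{N}$ generated by $\gamma$ and $\gamma+1$, not one of the truncated semigroups $\langle\cdots\rangle_t$.
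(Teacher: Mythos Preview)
Your proof is correct and is essentially identical to the paper's own argument: both directions use the reparametrization $a\gamma+b(\gamma+1)=(a+b)\gamma+b$ (for $\subseteq$) and $i\gamma+j=(i-j)\gamma+j(\gamma+1)$ (for $\supseteq$), just as you do. The only cosmetic difference is that the paper treats the inclusions in the opposite order.
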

\begin{proof}
As $0\le j\le i$, we have that $i\gamma +j =(i-j)\gamma+j(\gamma+1)\in \langle \gamma,\gamma+1\rangle$ and therefore we conclude that the rightmost set is contained in the leftmost one.

Observing that, for non-negative integers $x$ and $y$, we have $x\gamma+y(\gamma+1)=(x+y)\gamma+y$ and $y\le x+y$, the reverse inclusion follows.
\end{proof}

 As a consequence, we can write:
 \begin{align}
   \left[ 0,(p+1)\mu \right[\cap\langle \gamma,\gamma+1\rangle & =\left[ 0,(p+1)\mu \right[\cap\left\{i\gamma +j\mid i,j\in \mathbb{N} \mbox{ and } 0\le j\le i\right\}.\nonumber 
 \end{align}
 
Now, using Corollary~\ref{cor:compareGammaMu}, which ensures that, given non-negative integers $i$ and $j$, if $i\ge\left(p/2+1\right)$, then $i\gamma+j\ge p\mu$ and also that if $i\ge\left(p/2+2\right)$, then $i\gamma+j\ge (p+1)\mu$, we get the following:
\begin{proposition}\label{prop:preliminary-for-left-elts}
Let $p$ be an even integer. Then
\begin{align}
\left[ 0,p\mu \right[\cap\langle \gamma,\gamma+1\rangle &= \left\{i\gamma +j\mid 0\le i\le \frac{p}{2}, 0\le j\le i\right\};\label{eq:line1:preliminary-for-left-elts}\\
\left[ 0,(p+1)\mu \right[\cap\langle \gamma,\gamma+1\rangle &= \left\{i\gamma +j\mid 0\le i\le \frac{p}{2}+1, 0\le j\le i\right\}\label{eq:line2:preliminary-for-left-elts}.
\end{align}
\end{proposition}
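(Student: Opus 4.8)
The plan is to read off both equalities from Lemma~\ref{lemma:elts-gammagammap1} combined with Corollary~\ref{cor:compareGammaMu}, supplemented by one elementary arithmetic check. By Lemma~\ref{lemma:elts-gammagammap1}, every element of $\langle\gamma,\gamma+1\rangle$ has the form $i\gamma+j$ with $i,j\in\mathbb N$ and $0\le j\le i$; since $p$ is even, each of $p/2$, $p/2+1$, $p/2+2$ is an integer, and each claimed right-hand side is exactly the set of such elements satisfying the extra constraint $i\le p/2$ (respectively $i\le p/2+1$). So both equalities reduce to identifying, among the pairs $(i,j)$ with $0\le j\le i$, those for which $i\gamma+j<p\mu$ (resp. $i\gamma+j<(p+1)\mu$).

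For the inclusion ``$\subseteq$'' I would argue on the size of $i$. If $i\ge p/2+1$, then, using $\gamma>0$ and $j\ge 0$, Inequality~(\ref{line1:cor:compareGammaMu}) gives $i\gamma+j\ge (p/2+1)\gamma>p\mu$, so no such element lies in $[0,p\mu[$; hence the left-hand side of~(\ref{eq:line1:preliminary-for-left-elts}) involves only pairs with $i\le p/2$. The same argument, now with Inequality~(\ref{line2:cor:compareGammaMu}) and the threshold $p/2+2$, yields the corresponding statement for~(\ref{eq:line2:preliminary-for-left-elts}).

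For the reverse inclusion ``$\supseteq$'' I would check that the candidate elements really do fall in the stated interval. Since $0\le j\le i$ we have $i\gamma+j\le i(\gamma+1)$, and $i\mapsto i(\gamma+1)$ is increasing, so it suffices to bound the extreme case. Using $\gamma=2\mu-(p/2+4)$, a direct computation gives $(p/2)(\gamma+1)=p\mu-(p^2/4+3p/2)<p\mu$, which settles~(\ref{eq:line1:preliminary-for-left-elts}); and $(p/2+1)(\gamma+1)=(p+2)\mu-(p/2+1)(p/2+3)$, which is strictly below $(p+1)\mu$ precisely because $\mu=p^2/4+2p+2<p^2/4+2p+3=(p/2+1)(p/2+3)$; this settles~(\ref{eq:line2:preliminary-for-left-elts}). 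Combining the two inclusions proves the Proposition.

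I do not expect any genuine obstacle here: the only points requiring a little care are the monotonicity remark used to reduce ``$\supseteq$'' to its extreme pair, and the two inequalities above, both of which follow immediately from the explicit formulas for $\mu$ and $\gamma$ and the standing hypothesis that $p$ is an even positive integer.
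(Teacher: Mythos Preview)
Your proof is correct and follows essentially the same route as the paper: both arguments invoke Lemma~\ref{lemma:elts-gammagammap1} to parametrize the elements of $\langle\gamma,\gamma+1\rangle$ and then apply Corollary~\ref{cor:compareGammaMu} to rule out $i\ge p/2+1$ (resp.\ $i\ge p/2+2$). The only difference is that the paper leaves the reverse inclusion implicit, whereas you supply the explicit arithmetic checks $(p/2)(\gamma+1)<p\mu$ and $(p/2+1)(\gamma+1)<(p+1)\mu$; these are exactly the computations that later appear in the paper as part of Propositions~\ref{prop:preliminary_for_left_elts_2} and~\ref{prop:preliminary_for_level_p}, so your version is in fact a bit more self-contained at this point.
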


\begin{corollary}\label{cor:preliminary_for_decomposable}
$$\left[ p\mu,(p+1)\mu \right[\cap\langle \gamma,\gamma+1\rangle = \left\{\left(\frac{p}{2}+1\right)\gamma +j\mid 0\le j\le \frac{p}{2}+1\right\}.$$
\end{corollary}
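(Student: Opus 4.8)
The plan is to extract the corollary directly from Proposition~\ref{prop:preliminary-for-left-elts} by intersecting the two descriptions of initial segments of $\langle \gamma,\gamma+1\rangle$. Concretely, I would write
$$\left[ p\mu,(p+1)\mu \right[\cap\langle \gamma,\gamma+1\rangle = \Big(\left[ 0,(p+1)\mu \right[\cap\langle \gamma,\gamma+1\rangle\Big)\setminus\Big(\left[ 0,p\mu \right[\cap\langle \gamma,\gamma+1\rangle\Big),$$
and then substitute the right-hand sides of~(\ref{eq:line2:preliminary-for-left-elts}) and~(\ref{eq:line1:preliminary-for-left-elts}). The set difference removes exactly the terms $i\gamma+j$ with $0\le i\le p/2$, leaving only those with $i=p/2+1$ and $0\le j\le p/2+1$, which is precisely the claimed set.

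The one thing that needs a word of justification is that this set difference really does leave all of the $i=p/2+1$ terms and none of them get absorbed — i.e. that the representation $i\gamma+j$ with $0\le j\le i$ is unambiguous enough that ``remove the $i\le p/2$ layer'' is well defined. For this I would note that by Corollary~\ref{cor:compareGammaMu}, line~(\ref{line1:cor:compareGammaMu}), every element of the form $(p/2+1)\gamma+j$ with $j\ge 0$ satisfies $(p/2+1)\gamma+j\ge (p/2+1)\gamma > p\mu$, so all such elements (with $0\le j\le p/2+1$) genuinely lie in $\left[ p\mu,(p+1)\mu\right[$ once we also know they are $<(p+1)\mu$, which is guaranteed by their membership in the set on the left of~(\ref{eq:line2:preliminary-for-left-elts}). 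Conversely, any element of $\langle\gamma,\gamma+1\rangle$ in $\left[ p\mu,(p+1)\mu\right[$ is in the set~(\ref{eq:line2:preliminary-for-left-elts}) but not in~(\ref{eq:line1:preliminary-for-left-elts}), hence has $i=p/2+1$.

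The only mildly delicate point — and the closest thing to an obstacle — is making sure the formula is written with the ``canonical'' index $i=p/2+1$ rather than worrying that some $(p/2+1)\gamma+j$ might alternatively be written as $i'\gamma+j'$ with smaller $i'$; but Lemma~\ref{lemma:elts-gammagammap1} together with the size estimates pins down which layer each element belongs to, since consecutive layers $i\gamma,\dots,i\gamma+i$ and $(i+1)\gamma,\dots$ are separated (for the ranges of $i$ in play here $\gamma$ far exceeds $i$), so there is no overlap to resolve. Hence the proof is essentially the single displayed set-difference computation, citing Proposition~\ref{prop:preliminary-for-left-elts} and Corollary~\ref{cor:compareGammaMu}.
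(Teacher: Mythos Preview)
Your proposal is correct and is precisely the argument the paper has in mind: the corollary is stated immediately after Proposition~\ref{prop:preliminary-for-left-elts} with no separate proof, so it is meant to be read off as the set difference~(\ref{eq:line2:preliminary-for-left-elts})$\setminus$(\ref{eq:line1:preliminary-for-left-elts}), exactly as you do. Your extra care in checking that the $i=\frac{p}{2}+1$ layer is disjoint from the lower layers (via $(\frac{p}{2}+1)\gamma>p\mu$ from Corollary~\ref{cor:compareGammaMu}) is appropriate and makes the implicit step explicit.
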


Since we are interested in knowing which elements of $\langle \gamma,\gamma+1\rangle$ belong to $\left[ 0,(p+1)\mu \right[$ (which constitutes the important part of $S(p)$), we can, by Proposition~\ref{prop:preliminary-for-left-elts}, restrict our study to the elements $i\gamma +j$, with $0\le i\le \frac{p}{2}+1, 0\le j\le i$.

\subsection{The conductor of $S(p)$}\label{subsec:conductor-Sp}

Next we will see how the elements of $\langle \gamma,\gamma+1\rangle$ are distributed through the various levels. As a consequence, we show that the conductor of $S(p)$ is $p\mu$.
Regarding the pictorial representation we adopted, the elements smaller than $p\mu$ are in odd levels and do not contain any element in the $\frac{p}{2}+2$ rightmost columns, as Proposition~\ref{prop:preliminary_for_left_elts_2} shows.
On the other hand, the elements at level $p$ are at the $\frac{p}{2}+2$ rightmost columns, as stated by Proposition~\ref{prop:preliminary_for_level_p}.

\begin{proposition}\label{prop:preliminary_for_left_elts_2}
Let $i$ and $j$ be such that $1\le i\le \frac{p}{2}, 0\le j\le i$ and let $i\gamma +j\in \langle \gamma,\gamma+1\rangle$. Then 
$$i\gamma +j\in \left[(2i-1)\mu,2i\mu-\left(\frac{p}{2}+3\right)\right].$$
\end{proposition}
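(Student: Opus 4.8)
The plan is to pin down the exact location of $i\gamma+j$ by bounding it between two consecutive multiples of $\mu$, using the explicit formula $\gamma = 2\mu - \left(\frac{p}{2}+4\right)$. Writing $i\gamma = 2i\mu - i\left(\frac{p}{2}+4\right)$, we get $i\gamma + j = 2i\mu - i\left(\frac{p}{2}+4\right) + j$. So the whole statement reduces to two inequalities on the ``correction term'' $i\left(\frac{p}{2}+4\right) - j$: a lower bound showing $i\gamma+j \ge (2i-1)\mu$, i.e. $i\left(\frac{p}{2}+4\right) - j \le \mu$, and an upper bound showing $i\gamma+j \le 2i\mu - \left(\frac{p}{2}+3\right)$, i.e. $i\left(\frac{p}{2}+4\right) - j \ge \frac{p}{2}+3$.

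For the upper bound on $i\gamma+j$: since $j \le i$, we have $i\left(\frac{p}{2}+4\right) - j \ge i\left(\frac{p}{2}+4\right) - i = i\left(\frac{p}{2}+3\right) \ge \frac{p}{2}+3$, the last step because $i\ge 1$. For the lower bound on $i\gamma+j$: the term $i\left(\frac{p}{2}+4\right) - j$ is largest when $j$ is smallest and $i$ is largest, so I would use $j\ge 0$ and $i\le \frac{p}{2}$ to get $i\left(\frac{p}{2}+4\right) - j \le \frac{p}{2}\left(\frac{p}{2}+4\right) = \mu - 2 < \mu$, using the definition $\mu = \frac{p}{2}\left(\frac{p}{2}+4\right)+2$. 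This gives $i\gamma+j > (2i-1)\mu$, and since $i\gamma+j$ and $(2i-1)\mu$ are integers this is in fact $\ge (2i-1)\mu$ (though strict inequality already suffices for membership in the closed interval). Both halves are genuinely elementary once the substitution is made.

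The only mildly delicate point is making sure the two endpoints of the claimed interval $\left[(2i-1)\mu,\, 2i\mu-\left(\frac{p}{2}+3\right)\right]$ are consistent, i.e. that the interval is nonempty, which amounts to $\mu \ge \frac{p}{2}+3$; this is immediate from the formula for $\mu$ for any even positive $p$. I do not expect any real obstacle here: the result is a direct computation packaging the size estimates already recorded in Corollary~\ref{cor:compareGammaMu} and Lemma~\ref{lemma:relationgammaMu} into a per-level statement, and the hardest part is merely bookkeeping the inequalities $0\le j\le i\le \frac{p}{2}$ in the right direction for each of the two bounds.
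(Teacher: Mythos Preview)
Your proof is correct and follows essentially the same approach as the paper: both substitute $\gamma = 2\mu - (\tfrac{p}{2}+4)$ to write $i\gamma+j = 2i\mu - \big(i(\tfrac{p}{2}+4)-j\big)$ and then bound the correction term using $j\le i$ and $i\ge 1$ for the upper endpoint, and $j\ge 0$, $i\le \tfrac{p}{2}$ together with $\mu = \tfrac{p}{2}(\tfrac{p}{2}+4)+2$ for the lower endpoint. One small remark: your closing comment that the result ``packages the size estimates already recorded in Corollary~\ref{cor:compareGammaMu} and Lemma~\ref{lemma:relationgammaMu}'' is not quite accurate---neither of those is invoked here, and the argument is self-contained from the definitions of $\mu$ and $\gamma$.
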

\begin{proof}
 We have to prove the following inequalities:
\begin{align}
&(2i-1)\mu\le i\gamma; \label{eq:line1:preliminary_for_left_elts_2}\\
&i\gamma+i\le 2i\mu-\left(\frac{p}{2}+3\right).\label{eq:line2:preliminary_for_left_elts_2}
\end{align}
To prove (\ref{eq:line1:preliminary_for_left_elts_2}) we start by writing a sequence of equivalences:
$$(2i-1)\mu\le i\left(2\mu - \left(\frac{p}{2} +4\right)\right) \Longleftrightarrow -\mu\le -i\left(\frac{p}{2} +4\right) \Longleftrightarrow \mu\ge i\left(\frac{p}{2} +4\right).$$ 
The last inequality holds, since $\mu =\frac{p}{2}\left(\frac{p}{2}+4\right) +2 > \frac{p}{2}\left(\frac{p}{2}+4\right)$.
\smallskip

\noindent
To prove (\ref{eq:line2:preliminary_for_left_elts_2}), we also write a sequence of equivalences:
\begin{align}
 i\left(2\mu - \left(\frac{p}{2} +4\right)\right)+i\le 2i\mu-\left(\frac{p}{2}+3\right)  &\Longleftrightarrow -i\left(\frac{p}{2} +4\right)+i\le -\left(\frac{p}{2}+3\right)\nonumber\\
  &\Longleftrightarrow i\left(\left(\frac{p}{2} +4\right)-1\right)\ge \left(\frac{p}{2}+3\right).\nonumber
\end{align}
The last inequality is obvious, since $i\ge 1$.
\end{proof}

\begin{proposition}\label{prop:preliminary_for_level_p}
Let $j$ be such that $0\le j\le \frac{p}{2}+1$. Then 
$$\left(\frac{p}{2}+1\right)\gamma +j\in \left[(p+1)\mu-\left(\frac{p}{2}+2\right),(p+1)\mu\right[.$$
\end{proposition}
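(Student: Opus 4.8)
The plan is to prove the two inequalities
\begin{align}
&(p+1)\mu-\left(\tfrac{p}{2}+2\right)\le \left(\tfrac{p}{2}+1\right)\gamma;\nonumber\\
&\left(\tfrac{p}{2}+1\right)\gamma +\left(\tfrac{p}{2}+1\right)< (p+1)\mu,\nonumber
\end{align}
since the claim for a general $j$ with $0\le j\le \frac{p}{2}+1$ follows from monotonicity (the smallest value of $\left(\frac{p}{2}+1\right)\gamma+j$ occurs at $j=0$, the largest at $j=\frac{p}{2}+1$). First I would substitute the definition $\gamma = 2\mu-\left(\frac{p}{2}+4\right)$ so that everything is expressed in terms of $\mu$ and $p$ only; this reduces each inequality to a statement about $\mu$ versus an explicit quadratic polynomial in $p$, which can then be checked directly using $\mu=\frac{p}{2}\left(\frac{p}{2}+4\right)+2$.

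Concretely, for the lower bound, expanding $\left(\frac{p}{2}+1\right)\gamma = \left(\frac{p}{2}+1\right)\left(2\mu-\left(\frac{p}{2}+4\right)\right) = (p+2)\mu - \left(\frac{p}{2}+1\right)\left(\frac{p}{2}+4\right)$ turns the desired inequality $(p+1)\mu-\left(\frac{p}{2}+2\right)\le (p+2)\mu-\left(\frac{p}{2}+1\right)\left(\frac{p}{2}+4\right)$ into $\left(\frac{p}{2}+1\right)\left(\frac{p}{2}+4\right)-\left(\frac{p}{2}+2\right)\le \mu$. Since $\left(\frac{p}{2}+1\right)\left(\frac{p}{2}+4\right) = \frac{p}{2}\left(\frac{p}{2}+4\right)+\left(\frac{p}{2}+4\right)$ and $\mu = \frac{p}{2}\left(\frac{p}{2}+4\right)+2$, this is equivalent to $\left(\frac{p}{2}+4\right)-\left(\frac{p}{2}+2\right)\le 2$, i.e. $2\le 2$, which holds (with equality). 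For the upper bound, using the same expansion, $\left(\frac{p}{2}+1\right)\gamma+\left(\frac{p}{2}+1\right)=(p+2)\mu-\left(\frac{p}{2}+1\right)\left(\frac{p}{2}+4\right)+\left(\frac{p}{2}+1\right)=(p+2)\mu-\left(\frac{p}{2}+1\right)\left(\frac{p}{2}+3\right)$, so the inequality $<(p+1)\mu$ becomes $\mu<\left(\frac{p}{2}+1\right)\left(\frac{p}{2}+3\right)$, which expands to $\frac{p}{2}\left(\frac{p}{2}+4\right)+2<\frac{p^2}{4}+2p+3$, i.e. $\frac{p^2}{4}+2p+2<\frac{p^2}{4}+2p+3$, clearly true.

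Alternatively, and perhaps more in the spirit of the surrounding material, the lower bound can be obtained from Corollary~\ref{cor:compareGammaMu}: inequality~(\ref{line2:cor:compareGammaMu}) already gives $\left(\frac{p}{2}+2\right)\gamma > (p+1)\mu$, and since $\gamma < \mu$ (immediate from the definitions, or from $\gamma = 2\mu - \left(\frac{p}{2}+4\right) < \mu$ because $\mu > \frac{p}{2}+4$), subtracting $\gamma$ from both sides of $\left(\frac{p}{2}+2\right)\gamma$ and comparing with $(p+1)\mu$ yields $\left(\frac{p}{2}+1\right)\gamma > (p+1)\mu - \mu$... so that route needs the sharper estimate above; I would just present the direct computation. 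I do not anticipate a genuine obstacle here: the whole statement is a routine verification once $\gamma$ is eliminated. The only mild subtlety worth flagging is that the lower inequality is tight (equality $2=2$), so one should be careful that the interval endpoint $(p+1)\mu-\left(\frac{p}{2}+2\right)$ is included, consistent with the closed-left bracket in the statement, while the right endpoint $(p+1)\mu$ is correctly excluded since the strict inequality $\mu<\left(\frac{p}{2}+1\right)\left(\frac{p}{2}+3\right)$ is strict.
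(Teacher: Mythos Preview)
Your main argument is correct and is essentially the paper's own proof: both substitute $\gamma = 2\mu - \bigl(\tfrac{p}{2}+4\bigr)$ and reduce to an explicit identity in $p$. The only cosmetic difference is that the paper states both bounds as \emph{equalities} from the outset, namely $\bigl(\tfrac{p}{2}+1\bigr)\gamma = (p+1)\mu - \tfrac{p}{2}-2$ and $\bigl(\tfrac{p}{2}+1\bigr)\gamma + \tfrac{p}{2}+1 = (p+1)\mu - 1$, whereas you phrase them as inequalities and then discover the lower one is tight; your computation and the paper's are the same.

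One genuine slip in your ``alternative'' paragraph: the claim $\gamma < \mu$ is false. From $\gamma = 2\mu - \bigl(\tfrac{p}{2}+4\bigr)$ one has $\gamma < \mu \iff \mu < \tfrac{p}{2}+4$, and in fact $\mu = \tfrac{p^2}{4}+2p+2 > \tfrac{p}{2}+4$ for every even $p\ge 2$, so $\gamma > \mu$. Your ``because $\mu > \tfrac{p}{2}+4$'' has the implication reversed. Since you abandon that route anyway and present the direct computation, this does not affect the proof you actually give; just drop the aside.
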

\begin{proof}
We will prove the following equalities:
\begin{align}
&\left(\frac{p}{2}+1\right)\gamma = (p+1)\mu - \frac{p}{2}-2; \label{eq:line1:preliminary_for_level_q}\\
&\left(\frac{p}{2}+1\right)\gamma + \frac{p}{2}+1= (p+1)\mu -1. \label{eq:line2:preliminary_for_level_q}
\end{align}
Notice that, as $\left(\frac{p}{2}+1\right)\gamma\le \left(\frac{p}{2}+1\right)\gamma +j\le\left(\frac{p}{2}+1\right)\gamma + \frac{p}{2}+1$, the result follows.
\medskip

\noindent
We have the following sequence of equalities, which proves~(\ref{eq:line1:preliminary_for_level_q}): 
\begin{align}
\left(\frac{p}{2}+1\right)\gamma 
&=\left(\frac{p}{2}+1\right)\left(2\mu - \left(\frac{p}{2} +4\right)\right)  = (p+2)\mu - \left(\frac{p}{2}+1\right)\left(\frac{p}{2} +4\right) \nonumber\\
&= (p+2)\mu - \left(\frac{p^2}{4}+5\frac{p}{2}+4\right) = (p+1)\mu +\frac{p^2}{4}+2p+2 - \left(\frac{p^2}{4}+5\frac{p}{2}+4\right)\nonumber \\
&= (p+1)\mu - \frac{p}{2}-2.\nonumber
\end{align}
Next we prove~(\ref{eq:line2:preliminary_for_level_q}):
\begin{align}
\left(\frac{p}{2}+1\right)\gamma + \frac{p}{2}+1 
&= \left(\frac{p}{2}+1\right)(\gamma+1) = \left(\frac{p}{2}+1\right)\left(2\mu - \left(\frac{p}{2} +4\right)+1\right)\nonumber\\
&= \left(\frac{p}{2}+1\right)\left(2\mu - \left(\frac{p}{2} +3\right)\right)= (p+2)\mu - \left(\frac{p}{2}+1\right)\left(\frac{p}{2} +3\right)\nonumber\\
&= (p+2)\mu - \left(\frac{p^2}{4}+2p+3\right) = (p+1)\mu -1.\nonumber \qedhere
\end{align}
\end{proof}

The announced result is a consequence of Propositions~\ref{prop:preliminary-for-left-elts} and~\ref{prop:preliminary_for_left_elts_2}.
\begin{corollary}\label{cor:conductor-Sp}
The conductor of $S(p)$ is $p\mu$.
\end{corollary}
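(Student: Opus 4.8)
The plan is to note that $\conductoroper(S(p))\le p\mu$ is immediate from the definition $S(p)=\langle\mu,\gamma,\gamma+1\rangle_{p\mu}$, since every integer $\ge p\mu$ belongs to $S(p)$; so the whole task is the reverse bound, i.e.\ showing $p\mu-1\notin S(p)$. The first reduction I would make is that any element of $S(p)$ smaller than $p\mu$ must be a sum of the generators $\mu,\gamma,\gamma+1$ alone, because the remaining generators of $S(p)$ are the integers $\ge p\mu$ and a non-empty sum of positive integers each $\ge p\mu$ is itself $\ge p\mu$. Hence $S(p)\cap\left[0,p\mu\right[=\langle\mu,\gamma,\gamma+1\rangle\cap\left[0,p\mu\right[$, and by Lemma~\ref{lemma:elts-gammagammap1} every element of $\langle\mu,\gamma,\gamma+1\rangle$ has the form $a\mu+i\gamma+j$ with $a\in\mathbb N$ and $0\le j\le i$. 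It therefore suffices to show that $p\mu-1$ admits no such representation.

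Suppose, for contradiction, that $p\mu-1=a\mu+i\gamma+j$ with $a\in\mathbb N$ and $0\le j\le i$. I would first clear away two degenerate cases. If $i=0$, then $j=0$ as well and $p\mu-1=a\mu$, which is impossible modulo $\mu$ (note $\mu>1$). If $i\ge\frac p2+1$, then by Proposition~\ref{prop:preliminary-for-left-elts} the element $i\gamma+j$ does not lie in $\left[0,p\mu\right[$, so $i\gamma+j\ge p\mu$ and hence $a\mu+i\gamma+j\ge p\mu>p\mu-1$. Both being excluded, we are left with $1\le i\le\frac p2$, which is precisely the range in which Proposition~\ref{prop:preliminary_for_left_elts_2} applies.

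By that proposition, $(2i-1)\mu\le i\gamma+j\le 2i\mu-\left(\frac p2+3\right)$, and adding $a\mu$ throughout gives
$$(a+2i-1)\mu\ \le\ p\mu-1\ \le\ (a+2i)\mu-\left(\frac p2+3\right).$$
The left-hand inequality forces $(a+2i-1)\mu<p\mu$, hence $a+2i\le p$; the right-hand inequality forces $(a+2i)\mu\ge p\mu+\frac p2+2>p\mu$, hence $a+2i\ge p+1$. This contradiction establishes $p\mu-1\notin S(p)$, and therefore $\conductoroper(S(p))=p\mu$.

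I do not anticipate a genuine obstacle here: essentially all the content is already packaged in Propositions~\ref{prop:preliminary-for-left-elts} and~\ref{prop:preliminary_for_left_elts_2} (resting on Corollary~\ref{cor:compareGammaMu}), and what remains is bookkeeping. The only points calling for a little care are making the reduction to $\langle\mu,\gamma,\gamma+1\rangle$ precise, so that the auxiliary generators above $p\mu$ play no role below $p\mu$, and checking that the case split on $i$ is exhaustive — in particular that the range $i>\frac p2$ is ruled out by the comparison of $\gamma$ with $\mu$.
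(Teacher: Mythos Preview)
Your proof is correct and follows essentially the same route as the paper: both arguments use Proposition~\ref{prop:preliminary-for-left-elts} to bound the range of $i$ and Proposition~\ref{prop:preliminary_for_left_elts_2} to see that $i\gamma+j$ (for $1\le i\le p/2$) lies at least $\frac p2+3$ below the next multiple of $\mu$, hence cannot be congruent to $-1\pmod\mu$. The paper phrases this last point as a congruence observation while you extract the contradictory inequalities $a+2i\le p$ and $a+2i\ge p+1$ directly, but the content is the same.
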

\begin{proof}
As $\frac{p}{2}\gamma+\frac{p}{2}=p\mu-\frac{p}{2}\left(\frac{p}{2} +3\right)<p\mu-1$, the maximum of $\left\{i\gamma +j\mid 0\le i\le \frac{p}{2}, 0\le j\le i\right\}$ is smaller than $p\mu-1$.  It follows from Proposition~\ref{prop:preliminary-for-left-elts}~(\ref{eq:line1:preliminary-for-left-elts}) that $p\mu-1$ does not belong to $\langle \gamma,\gamma+1\rangle$. 

Furthermore, it follows from Proposition~\ref{prop:preliminary_for_left_elts_2} that no element of $\langle\gamma,\gamma+1\rangle$ smaller than $p\mu$ is congruent to $-1$ modulo $\mu$. We have thus proved that the conductor of the numerical semigroup $\langle \mu,\gamma,\gamma+1\rangle$ is no smaller than $p\mu$. 

As, by definition, the conductor of $S(p)$ is no greater than $p\mu$, the result follows.
\end{proof}
As an immediate consequence, we get the number $\qoper(S(p))$.
\begin{corollary}\label{cor:q-number-Sp}
The $\qoper$-number of $S(p)$ is $p$.
\end{corollary}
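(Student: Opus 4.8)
The plan is to combine the value of the conductor, already computed in Corollary~\ref{cor:conductor-Sp}, with the (easy) fact that the multiplicity of $S(p)$ is $\mu$. Recall that $\qoper(S(p))$ is by definition $\lceil\conductoroper(S(p))/\multiplicityoper(S(p))\rceil$, so once these two invariants are in hand the statement is immediate.

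First I would check that $\multiplicityoper(S(p))=\mu$. By Lemma~\ref{lemma:relationgammaMu} we have $\gamma>2\left(1-\frac{1}{p}\right)\mu\ge\mu$, since $p\ge 2$ forces $2\left(1-\frac1p\right)\ge 1$; hence also $\gamma+1>\mu$, and of course $p\mu\ge 2\mu>\mu$. Thus none of the ``building blocks'' of $S(p)=\langle\mu,\gamma,\gamma+1\rangle_{p\mu}$ — namely $\mu$, $\gamma$, $\gamma+1$, and the integers $\ge p\mu$ — is a positive integer smaller than $\mu$, and neither is any sum of such blocks. Therefore $\mu$ is the least positive element of $S(p)$.

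It then follows from Corollary~\ref{cor:conductor-Sp} that
$$\qoper(S(p))=\left\lceil\frac{\conductoroper(S(p))}{\multiplicityoper(S(p))}\right\rceil=\left\lceil\frac{p\mu}{\mu}\right\rceil=p,$$
which is the assertion. There is no real obstacle here: the only point worth a word is that adjoining the integers $\ge p\mu$ to $\langle\mu,\gamma,\gamma+1\rangle$ cannot produce an element below $\mu$, and this is clear because $p\mu>\mu$. I expect the whole proof to take only a couple of lines.
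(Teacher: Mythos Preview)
Your proposal is correct and mirrors the paper's own treatment: the paper states this corollary as an immediate consequence of Corollary~\ref{cor:conductor-Sp}, implicitly using that $\multiplicityoper(S(p))=\mu$, and your argument fills in exactly that detail before computing $\qoper(S(p))=\lceil p\mu/\mu\rceil=p$.
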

Recall that $\rho(S(p)) =\qoper(S(p))\cdot\multiplicityoper(S(p))-\conductoroper(S(p))$. Thus we have:
\begin{corollary}\label{cor:rho-Sp}
 $\rho(S(p)) = 0$.
\end{corollary}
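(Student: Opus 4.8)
The plan is simply to substitute into the definition of $\rho$. Recall that $\rho(S(p)) = \qoper(S(p))\cdot \multiplicityoper(S(p)) - \conductoroper(S(p))$, so I need the three invariants $\qoper$, $\multiplicityoper$ and $\conductoroper$ of $S(p)$. Two of them are already in hand: Corollary~\ref{cor:q-number-Sp} gives $\qoper(S(p)) = p$ and Corollary~\ref{cor:conductor-Sp} gives $\conductoroper(S(p)) = p\mu$. The only remaining point is the multiplicity.

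For the multiplicity, I would observe that $S(p) = \langle \mu,\gamma,\gamma+1\rangle_{p\mu}$, so every positive element is at least $\min\{\mu,\gamma,\gamma+1,p\mu\} = \mu$, the inequalities $\mu < \gamma < \gamma+1 < p\mu$ being immediate from $\gamma = 2\mu-\left(\frac{p}{2}+4\right)$ together with $\mu > \frac{p}{2}+4$ (equivalently, from Lemma~\ref{lemma:relationgammaMu}, which already yields $\gamma > 2\left(1-\frac{1}{p}\right)\mu \ge \mu$). Since $\mu$ itself lies in $S(p)$, this gives $\multiplicityoper(S(p)) = \mu$. Substituting the three values,
$$\rho(S(p)) = p\cdot\mu - p\mu = 0.$$

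There is no real obstacle here: the statement is pure bookkeeping once Corollary~\ref{cor:conductor-Sp} is available. In fact the equality $\multiplicityoper(S(p)) = \mu$ was already used implicitly in passing from Corollary~\ref{cor:conductor-Sp} to Corollary~\ref{cor:q-number-Sp}, via $\qoper = \lceil \conductoroper/\multiplicityoper\rceil = \lceil p\mu/\mu\rceil = p$. One could even avoid isolating the multiplicity altogether and argue directly that, because $p\mu$ is an exact multiple of $\mu$, the ceiling in the definition of $\qoper$ performs no rounding, so $\qoper(S(p))\cdot\multiplicityoper(S(p)) = \conductoroper(S(p))$ and hence $\rho(S(p)) = 0$.
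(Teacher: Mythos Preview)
Your argument is correct and matches the paper's approach exactly: the corollary is an immediate substitution of $\qoper(S(p))=p$, $\multiplicityoper(S(p))=\mu$, and $\conductoroper(S(p))=p\mu$ into the definition of $\rho$. You even make explicit the verification $\multiplicityoper(S(p))=\mu$, which the paper leaves implicit.
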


The following also immediate consequence reduces to the formula to the Eliahou number of $S(p)$ in terms of $p$.
The left primitives are precisely the distinct integers $\mu$, $\gamma$ and $\gamma+1$.
\begin{corollary}\label{cor:left-primitives-Sp}
  $\lvert \primitivesoper(S(p))\cap \leftsoper(S(p))\rvert = \lvert\left\{\mu,\gamma,\gamma+1\right\}\rvert = 3$.
\end{corollary}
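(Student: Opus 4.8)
The plan is to establish the set equality $\primitivesoper(S(p))\cap\leftsoper(S(p))=\{\mu,\gamma,\gamma+1\}$ together with the fact that these three integers are pairwise distinct; the stated value $3$ for the cardinality is then immediate. I would begin by recording the two numerical facts already available: $\conductoroper(S(p))=p\mu$ by Corollary~\ref{cor:conductor-Sp}, and $\multiplicityoper(S(p))=\mu$ — the latter because $\mu$ is plainly the least positive element of $S(p)$ (alternatively because, by Corollaries~\ref{cor:q-number-Sp} and~\ref{cor:rho-Sp}, $0=\rho(S(p))=\qoper(S(p))\cdot\multiplicityoper(S(p))-\conductoroper(S(p))=p\cdot\multiplicityoper(S(p))-p\mu$).

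For the inclusion $\{\mu,\gamma,\gamma+1\}\subseteq\primitivesoper(S(p))\cap\leftsoper(S(p))$ I would check three elementary points. First, the three integers are pairwise distinct, since $\gamma-\mu=\mu-\left(\frac{p}{2}+4\right)=\frac{p^{2}}{4}+\frac{3p}{2}-2>0$ for every even $p\ge 2$, so that $\mu<\gamma<\gamma+1$. Second, all three are left elements: $\mu<p\mu=\conductoroper(S(p))$, and $\gamma<\gamma+1=2\mu-\left(\frac{p}{2}+3\right)<2\mu\le p\mu$. Third, all three are primitive: a nonzero element of a numerical semigroup that is not a minimal generator is a sum of two nonzero elements, hence is at least $2\multiplicityoper(S(p))=2\mu$, whereas $\mu$, $\gamma$ and $\gamma+1$ are all strictly below $2\mu$, so none of them can decompose.

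For the reverse inclusion I would invoke the standard fact that every generating set of a numerical semigroup contains its unique minimal generating set. Since, by definition, $S(p)=\langle\mu,\gamma,\gamma+1\rangle_{p\mu}$ is generated by $\{\mu,\gamma,\gamma+1\}\cup\{z\in\mathbb{Z}\mid z\ge p\mu\}$, every primitive element of $S(p)$ lies in that set; if it is moreover a left element it is smaller than $\conductoroper(S(p))=p\mu$, hence it is not one of the integers $z\ge p\mu$ and must be one of $\mu,\gamma,\gamma+1$. Combining the two inclusions gives the set equality, and therefore $\lvert\primitivesoper(S(p))\cap\leftsoper(S(p))\rvert=3$. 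No genuine obstacle arises here: once Corollary~\ref{cor:conductor-Sp} is available the argument is purely formal, and the only thing to watch is the handful of arithmetic inequalities ($\mu<\gamma$ and $\gamma+1<2\mu\le p\mu$) that guarantee $\mu,\gamma,\gamma+1$ are three distinct, non-redundant generators lying below the conductor.
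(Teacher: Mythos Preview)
Your argument is correct and is precisely the natural elaboration of what the paper leaves implicit: the paper states this corollary without proof, merely remarking that ``the left primitives are precisely the distinct integers $\mu$, $\gamma$ and $\gamma+1$'', and your verification of the two inclusions (together with the distinctness check) is exactly what underlies that remark. There is nothing to add.
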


\subsection{The left elements of $S(p)$}\label{subsec:left-elements-Sp}
In what follows, having in mind a pictorial representation of $S(p)$ (such as the one given by Figure~\ref{fig:Sp-shapes}) may help to understand the formal writing of the results.

Let $i$ and $j$ be integers such that $0\le i\le \frac{p}{2}, 0\le j\le i$.  We define:
$$C_{i,j}=\{x\mu+i\gamma +j\mid x\in \mathbb{N} \mbox{ and } x\mu+i\gamma +j <p\mu\}.$$

Pictorially, $C_{i,j}$ is a column with $i\gamma +j$ in its base and containing all the elements above it, until level~$p-1$. 

Recall that, by Lemma~\ref{lemma:elts-gammagammap1}, all the elements of $\langle \gamma,\gamma+1\rangle\subseteq S(p)$ of level less than $p$ can be written in the form $i\gamma +j$. It follows that the set of left elements of $S(p)$ is the union of the $C_{i,j}$. We state it as a remark.
\begin{remark}\label{rem:union-columns}
  $\leftsoper(S(p))=\bigcup_{0\le i\le \frac{p}{2}, 0\le j\le i}C_{i,j}$.
\end{remark}

\begin{lemma}\label{lemma:the-set-Cij}
The cardinality of $C_{0,0}=\{x\mu\mid 0\le x <p\}$ is $p$.\\
If $i>0$, then 
\begin{equation}\label{eq:Cij}
C_{i,j}=\{x\mu+i\gamma +j\mid  x\in \mathbb{N} \mbox{ and } x \le p-2i\}.
\end{equation}
Moreover, the cardinality of $C_{i,j}$ is $p-2i + 1$.
\end{lemma}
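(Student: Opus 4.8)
The plan is to treat $C_{0,0}$ separately and then, for $i>0$, to reduce the defining inequality $x\mu+i\gamma+j<p\mu$ to a bound on $x$ alone, using the localization of $i\gamma+j$ already obtained. In both cases the counting is then immediate because the elements of a column differ by nonzero multiples of $\mu$.

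For $C_{0,0}$ the defining condition of the column is simply $x\mu<p\mu$, which for $x\in\mathbb N$ is equivalent to $x<p$; hence $C_{0,0}=\{x\mu\mid 0\le x<p\}$ has exactly $p$ elements.

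Now let $1\le i\le\frac{p}{2}$ and $0\le j\le i$. By Proposition~\ref{prop:preliminary_for_left_elts_2} we have $(2i-1)\mu\le i\gamma+j\le 2i\mu-\left(\frac{p}{2}+3\right)<2i\mu$, and therefore, for every $x\in\mathbb N$,
$$\left(x+2i-1\right)\mu\ \le\ x\mu+i\gamma+j\ <\ \left(x+2i\right)\mu .$$
If $x\le p-2i$, then $x+2i\le p$, so by the right-hand inequality $x\mu+i\gamma+j<\left(x+2i\right)\mu\le p\mu$, i.e.\ $x\mu+i\gamma+j\in C_{i,j}$. Conversely, if $x\ge p-2i+1$, then $x+2i-1\ge p$, so by the left-hand inequality $x\mu+i\gamma+j\ge\left(x+2i-1\right)\mu\ge p\mu$, i.e.\ $x\mu+i\gamma+j\notin C_{i,j}$. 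This establishes the description~(\ref{eq:Cij}) of $C_{i,j}$.

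Finally, since $i\le\frac{p}{2}$ we have $p-2i\ge 0$, so in~(\ref{eq:Cij}) the integer $x$ runs exactly over $\{0,1,\dots,p-2i\}$; the resulting values $x\mu+i\gamma+j$ are pairwise distinct, and hence $\lvert C_{i,j}\rvert=p-2i+1$. I do not expect a genuine obstacle here: the argument is a routine unwinding of the definitions, and the only points needing a little attention are using the \emph{strict} upper bound $i\gamma+j<2i\mu$ coming from Proposition~\ref{prop:preliminary_for_left_elts_2} at the right endpoint, and noting that $p-2i\ge 0$ so that the column is non-empty.
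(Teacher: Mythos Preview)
Your proof is correct and follows essentially the same route as the paper: both argue the equivalence $x\mu+i\gamma+j<p\mu\iff x\le p-2i$ by bounding $i\gamma+j$ between $(2i-1)\mu$ and $2i\mu$. The only cosmetic difference is that you invoke Proposition~\ref{prop:preliminary_for_left_elts_2} for those bounds, whereas the paper re-derives them in place by substituting $\gamma=2\mu-(\tfrac{p}{2}+4)$; your version is slightly cleaner for exactly this reason.
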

\begin{proof}
The assertions concerning $C_{0,0}$ are immediate.

To prove the equality in (\ref{eq:Cij}), it suffices to prove
\begin{equation}\label{eq:Cij-proof}
  x\mu+i\gamma +j <p\mu \Longleftrightarrow x \le p-2i.
\end{equation}

Assume that $x \le p-2i$. Then 
\begin{align}
 x\mu+i\gamma +j &\le (p-2i)\mu + i\gamma+j = (p-2i)\mu + 2i\mu -i \left(\frac{p}{2}+4\right)+j \nonumber\\
                 & =p\mu -i \left(\frac{p}{2}+4\right)+j < p,\mu\nonumber
\end{align}
which proves the implication from right to left. For the converse, suppose that $x\ge p -2i +1$.
Then 
\begin{align}
 x\mu+i\gamma +j & \ge \left(p -2i +1 \right)\mu + 2i\mu -i \left(\frac{p}{2}+4\right)+j=p\mu + \mu -i \left(\frac{p}{2}+4\right)+j\nonumber\\
                 &\ge p\mu + \mu-\frac{p}{2}\left(\frac{p}{2}+4\right)+j\ge p\mu.\nonumber
\end{align}

The conclusion about the cardinality of $C_{i,j}$ is immediate.
\end{proof}
The columns $C_{i,j}$ are pairwise disjoint, that is, either do not intersect or are equal.
\begin{lemma}\label{lemma:Lij-disjoint}
Let $i, i', j$ and $j'$ be integers such that $0\le i,i'\le \frac{p}{2}$, $0\le j\le i$ and $0\le j'\le i'$.
If $C_{i,j}\cap C_{i',j'}\ne \emptyset$, then $i'=i$ and $j'=j$.
\end{lemma}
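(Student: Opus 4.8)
The plan is to show that if $x\mu + i\gamma + j = x'\mu + i'\gamma + j'$ with the given constraints, then $(i,j) = (i',j')$ (and hence also $x = x'$). The key obstacle is that $\mu$, $\gamma$, $\gamma+1$ are not independent as generators, so one cannot simply read off coefficients; the bookkeeping has to be done carefully, exploiting the tight size constraints coming from the fact that all these elements lie below $p\mu$.

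First I would set up the equation. Suppose $z \in C_{i,j}\cap C_{i',j'}$, so $z = x\mu + i\gamma + j = x'\mu + i'\gamma + j'$ for some $x,x' \in \mathbb{N}$, with $0 \le i,i' \le \tfrac{p}{2}$, $0 \le j \le i$, $0 \le j' \le i'$, and both $z < p\mu$. By Proposition~\ref{prop:preliminary_for_left_elts_2} (applied for $i \ge 1$; the case $i = 0$ gives $z = x\mu$ directly), the element $i\gamma + j$ lies in the interval $\bigl[(2i-1)\mu, 2i\mu - (\tfrac{p}{2}+3)\bigr]$, so its level — i.e. which $I_\ell$ it sits in — is determined: writing $z = x\mu + i\gamma + j$, I want to argue that the level of $z$ is $x + 2i$ (for $i \ge 1$), since $i\gamma + j$ occupies level $2i - 1$ but is offset by an amount strictly less than $\mu$ from $(2i-1)\mu$, so adding $x\mu$ shifts it up by exactly $x$ levels without carrying over a further level. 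The case $i = 0$ puts $i\gamma+j = 0$ at level $0$, and $z = x\mu$ at level $x$.

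Next I would use this to pin down the parameters. The level of $z$ equals both $x + 2i$ and $x' + 2i'$ (with the obvious reading when $i$ or $i'$ is $0$), and moreover the residue of $z$ modulo $\mu$ inside its level is exactly $i\gamma + j - (2i-1)\mu$ (resp.\ the primed version), which by Proposition~\ref{prop:preliminary_for_left_elts_2} is a well-defined integer in $\bigl[0,\mu - (\tfrac{p}{2}+3)\bigr]$. Hence $i\gamma + j \equiv i'\gamma + j' \pmod{\mu}$, and combined with the level information one gets $2i - 2i'$ must match up with $x' - x$; the cleanest route is: from $x + 2i = x' + 2i'$ and $x\mu + i\gamma + j = x'\mu + i'\gamma + j'$, subtract to get $(i - i')\gamma + (j - j') = (x' - x)\mu = 2(i - i')\mu$, i.e. $(i - i')(\gamma - 2\mu) = -(j - j')$, that is $(i-i')\bigl(\tfrac{p}{2}+4\bigr) = j - j'$. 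Since $|j - j'| \le \max(i,i') \le \tfrac{p}{2} < \tfrac{p}{2}+4$, the only possibility is $i = i'$, whence $j = j'$, and then $x = x'$ as well. I would also handle the mixed case where exactly one of $i,i'$ is zero separately, observing that then $z$ is a multiple of $\mu$ on one side while the interval estimate of Proposition~\ref{prop:preliminary_for_left_elts_2} forces $z \not\equiv 0 \pmod{\mu}$ on the other, a contradiction.

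The main obstacle I anticipate is being careful about the ``no extra carry'' claim that the level of $x\mu + i\gamma + j$ is exactly $x + 2i$: this is exactly what Proposition~\ref{prop:preliminary_for_left_elts_2} is designed to deliver, since it guarantees $i\gamma + j$ sits strictly inside level $2i-1$ (bounded away from the top by $\tfrac{p}{2}+3$ and from the bottom by being at least $(2i-1)\mu$), so there is room to add any number of full $\mu$'s without spilling into an extra level, as long as the result stays below $p\mu$ — which is part of the hypothesis $z \in C_{i,j}$. Once that is in hand, the rest is the short modular computation above. I would present the argument in the order: (1) dispose of the case $i = 0$ or $i' = 0$; (2) for $i,i' \ge 1$, establish the level formula; (3) subtract and conclude via $|j - j'| \le \tfrac{p}{2} < \tfrac{p}{2}+4$.
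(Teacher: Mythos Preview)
Your argument is correct, but it follows a different route from the paper. The paper works purely modulo~$\mu$: from $x\mu + i\gamma + j = x'\mu + i'\gamma + j'$ it reduces to $(i-i')(\tfrac{p}{2}+4) + (j-j') \equiv 0 \pmod{\mu}$, then uses a direct size estimate (the expressions $i(\tfrac{p}{2}+4)+j$ are trapped in an interval of length less than~$\mu$) to upgrade the congruence to an equality, and finishes as you do. Your approach instead invokes Proposition~\ref{prop:preliminary_for_left_elts_2} to read off the \emph{level} of each representation, obtaining the exact relation $x'-x = 2(i-i')$ before substituting; this bypasses the modular size bound entirely and yields $(i-i')(\tfrac{p}{2}+4) = j-j'$ directly. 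Your route is more geometric and reuses the level machinery the paper has already built, while the paper's route is self-contained and does not appeal to Proposition~\ref{prop:preliminary_for_left_elts_2}. One small slip: for $i\ge 1$ the level of $x\mu + i\gamma + j$ is $x + 2i - 1$, not $x + 2i$ (Proposition~\ref{prop:preliminary_for_left_elts_2} places $i\gamma+j$ in level $2i-1$); this is harmless, since only the equality of the two level expressions matters. Your separate treatment of the mixed case $i=0$, $i'\ge 1$ is also correct---Proposition~\ref{prop:preliminary_for_left_elts_2} gives the strict inequalities $(2i'-1)\mu < i'\gamma + j' < 2i'\mu$, so $z$ cannot be a multiple of~$\mu$.
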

\begin{proof}
Let $x\mu+i\gamma +j=x'\mu+i'\gamma +j'\in C_{i,j}\cap C_{i',j'}$, with $0\le i\le \frac{p}{2}, 0\le j\le i$, and $0\le i'\le \frac{p}{2}, 0\le j'\le i'$.
The following sequence of implications holds:
\begin{align}
 x\mu+i\gamma +j=x'\mu+i'\gamma +j'&\implies i\gamma +j\equiv i'\gamma +j'\pmod{\mu}\nonumber\\
 &\implies i(p/2+4)+j\equiv i'(p/2+4)+j'\pmod{\mu}\nonumber\\
 &\implies (i-i')(p/2+4)+(j-j')\equiv 0\pmod{\mu}.\nonumber
\end{align}

Suppose now that $(i-i')(p/2+4)+(j-j')\ne 0$. We may assume that $i(p/2+4)+j> i'(p/2+4)+j'$.
Since we are assuming that $1\le i\le p/2$ and $0\le j\le i$, we get that $$(p/2+4)\le i(p/2+4)+j\le p/2(p/2+4)+p/2=p^2/2+2p+p/2.$$ On the other hand, since $i'\ge 1$, we get that $i'(p/2+4)+j'\ge p/2+4$. It follows that $$i(p/2+4)+j - (i'(p/2+4)+j')\le p^2/2+2p+p/2 - (p/2+4) = p^2/2+2p-4<\mu,$$ which is a contradiction. Thus  $(i-i')(p/2+4)+(j-j')= 0$.
\smallskip

As $\lvert j-j'\rvert <p/2+4$, we have that $(i-i')(p/2+4)+(j-j')=0$ implies $j=j'$ and therefore $i=i'$. It follows that $x'=x$ as well.
\end{proof}
The following lemma, which counts the left elements of $S(p)$, is crucial. 

\begin{lemma}\label{lemma:number_left_elts} Let $\leftsoper$ be the set of left elements of~$S$. Then
$$\lvert \leftsoper \rvert = p+\sum_{i=1}^{p/2}\left[p-(2i-1)\right](i+1).$$
\end{lemma}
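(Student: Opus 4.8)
The plan is to combine the structural results already established: by Remark~\ref{rem:union-columns}, $\leftsoper(S(p))$ is the union of the columns $C_{i,j}$ over $0\le i\le \frac{p}{2}$, $0\le j\le i$, and by Lemma~\ref{lemma:Lij-disjoint} these columns are pairwise disjoint. Hence $\lvert \leftsoper\rvert$ is simply the sum of the cardinalities $\lvert C_{i,j}\rvert$ over all admissible pairs $(i,j)$. The sizes of these columns are recorded in Lemma~\ref{lemma:the-set-Cij}: $\lvert C_{0,0}\rvert = p$, and for $i>0$ we have $\lvert C_{i,j}\rvert = p-2i+1 = p-(2i-1)$, which importantly does not depend on $j$.

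So the main body of the proof is just bookkeeping of the index set. First I would peel off the $i=0$ term, which contributes only the single column $C_{0,0}$ of size $p$ (since $j$ must satisfy $0\le j\le 0$, i.e. $j=0$). For each fixed $i$ with $1\le i\le \frac{p}{2}$, the index $j$ ranges over $0,1,\dots,i$, giving $i+1$ columns, each of size $p-(2i-1)$. Summing over these $j$ contributes $[p-(2i-1)](i+1)$ to the total. Summing over $i$ from $1$ to $\frac{p}{2}$ then yields exactly
$$\lvert \leftsoper\rvert = p + \sum_{i=1}^{p/2}\left[p-(2i-1)\right](i+1),$$
as claimed.

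The only thing that needs a word of care — and it is the closest thing to an obstacle, though a mild one — is making sure the decomposition into columns really is exhaustive and non-overlapping in the precise index range stated. Exhaustiveness follows because every element of $S(p)$ below level $p$ lies in $\langle\mu,\gamma,\gamma+1\rangle$ with the "$\ge p\mu$" part not yet contributing, and by Lemma~\ref{lemma:elts-gammagammap1} together with Proposition~\ref{prop:preliminary-for-left-elts}~(\ref{eq:line1:preliminary-for-left-elts}) such an element can be written as $x\mu + i\gamma + j$ with $0\le i\le \frac{p}{2}$, $0\le j\le i$; disjointness is exactly Lemma~\ref{lemma:Lij-disjoint}. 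One should also double-check that the column $C_{0,0}$ fits the general pattern correctly at the endpoint (it is the one genuine exception to the formula $p-2i+1$, since for $i=0$ that formula would give $p+1$ rather than $p$), which is why it is treated separately both in Lemma~\ref{lemma:the-set-Cij} and here. With these points noted, the computation is immediate and the lemma follows.
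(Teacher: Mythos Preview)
Your proof is correct and follows exactly the same approach as the paper: use Remark~\ref{rem:union-columns} to write $\leftsoper$ as the union of the $C_{i,j}$, invoke Lemma~\ref{lemma:Lij-disjoint} for disjointness, and then sum the cardinalities given by Lemma~\ref{lemma:the-set-Cij}, peeling off the exceptional $i=0$ term. Your write-up is in fact more detailed than the paper's (you spell out why $C_{0,0}$ must be treated separately and carefully track the index ranges), but the logical skeleton is identical.
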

\begin{proof}
  By Remark~\ref{rem:union-columns}, $\lvert \leftsoper \rvert = \lvert\bigcup_{0\le i\le \frac{p}{2}, 0\le j\le i}C_{i,j}\rvert$. As the $C_{i.j}$ are disjoint (Lemma~\ref{lemma:Lij-disjoint}), we can write: 
$$\left\lvert\bigcup_{0\le i\le \frac{p}{2}, 0\le j\le i}C_{i,j}\right\rvert = \bigcup_{0\le i\le \frac{p}{2}, 0\le j\le i}\lvert C_{i,j}\rvert.$$ 
The result follows from Lemma~\ref{lemma:the-set-Cij}, which gives the cardinality of each of the $C_{ij}$.
\end{proof}

As a consequence, we get a formula for the number of left elements.

\begin{corollary}\label{cor:number_left_elts}
  The number $\leftsoper$ of left elements of $S(p)$ is given by:
$$\lvert L\rvert = \frac{p^3}{24}+\frac{3}{8}p^2 + \frac{13}{12}p.$$
\end{corollary}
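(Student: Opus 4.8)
The plan is simply to evaluate the closed sum produced by Lemma~\ref{lemma:number_left_elts}. Writing $n=\frac{p}{2}$ (an integer, since $p$ is even), the quantity $p-(2i-1)$ equals $2n-2i+1$, so the sum becomes $\sum_{i=1}^{n}\bigl(2n-2i+1\bigr)(i+1)$. Expanding the summand as $(2n+1)(i+1)-2i(i+1)=(2n+1)(i+1)-2i^2-2i$ reduces the whole computation to the elementary sums $\sum_{i=1}^n 1$, $\sum_{i=1}^n i$ and $\sum_{i=1}^n i^2$.

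First I would record the standard identities $\sum_{i=1}^n i=\frac{n(n+1)}{2}$, $\sum_{i=1}^n i^2=\frac{n(n+1)(2n+1)}{6}$, and the immediate consequence $\sum_{i=1}^n(i+1)=\frac{n(n+3)}{2}$. Substituting these into the expansion, placing everything over the common denominator $6$, and collecting the coefficients of $n^3$, $n^2$ and $n$ gives the compact form $\frac{n(2n^2+9n+1)}{6}$ for the sum. Adding back the leading term $p=2n$ coming from Lemma~\ref{lemma:number_left_elts} then yields $\lvert L\rvert=\frac{2n^3+9n^2+13n}{6}$.

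Finally I would substitute $n=\frac{p}{2}$ back in: $2n^3=\frac{p^3}{4}$, $9n^2=\frac{9p^2}{4}$, $13n=\frac{13p}{2}$, so that $\lvert L\rvert=\frac{1}{6}\bigl(\frac{p^3}{4}+\frac{9p^2}{4}+\frac{13p}{2}\bigr)=\frac{p^3}{24}+\frac{3p^2}{8}+\frac{13p}{12}$, which is exactly the claimed formula.

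There is no genuine obstacle here: the argument is a one-shot algebraic simplification, and the only thing requiring care is the bookkeeping of coefficients when collecting terms over the common denominator. As a sanity check (indeed, an alternative proof), one may observe that by Lemma~\ref{lemma:number_left_elts} the number $\lvert L\rvert$ is a polynomial in $p$ of degree $3$ with zero constant term, hence determined by its values at, say, $p=2,4,6,8$; evaluating the finite sum directly at those four values and matching against the proposed cubic finishes the job, and this is equally easy to confirm with a computer algebra system.
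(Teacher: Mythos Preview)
Your proof is correct and follows essentially the same route as the paper: both start from Lemma~\ref{lemma:number_left_elts} and reduce the finite sum to the standard identities for $\sum i$ and $\sum i^2$, then simplify. Your substitution $n=\tfrac{p}{2}$ streamlines the bookkeeping slightly compared with the paper's direct manipulation in terms of $\tfrac{p}{2}$, and your interpolation sanity check is a small extra, but the underlying argument is the same.
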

\begin{proof}
  The proof uses Lemma~\ref{lemma:number_left_elts} and formulas for the sum of consecutive integers and the sum of squares of consecutive integers.

  \begin{align}
\lvert L\rvert &=p+\sum_{i=1}^{p/2}\left[p-(2i-1)\right](i+1)
               = p+\sum_{i=1}^{\frac{p}{2}}\left((p+1)+(p-1)i-2i^2\right)\nonumber \\
               &=p+(p+1)\frac{p}{2}+(p-1)\frac{\frac{p}{2}\left(\frac{p}{2}+1\right)}{2}-2\frac{\frac{p}{2}\left(\frac{p}{2}+1\right)(p+1)}{6}\nonumber \\
               &=p+\frac{p}{2}\left((p+1)+(p-1)\frac{\left(\frac{p}{2}+1\right)}{2}-\frac{\left(\frac{p}{2}+1\right)(p+1)}{3}\right)\nonumber\\
               &=p+\frac{p}{2}\left(p+1+\left(\frac{p}{2}+1\right)\left(\frac{(p-1)}{2}-\frac{(p+1)}{3}\right)\right)\nonumber \\
               &=p+\frac{p}{2}\left(p+1+\left(\frac{p}{2}+1\right)\left(\frac{p-5}{6}\right)\right)\nonumber \\
	       &=p+\frac{p}{2}\left(p+1+\frac{p^2}{12}-\frac{3}{12}p-\frac{5}{6}\right)\nonumber \\
               &=p+\frac{p}{2}\left(\frac{p^2}{12}+\frac{9}{12}p+\frac{1}{6}\right)
               =\frac{p^3}{24}+\frac{9}{24}p^2+\frac{13}{12}p   \nonumber \qedhere
  \end{align}
\end{proof}

\subsection{The Eliahou number of $S(p)$}\label{subsec:Eliahou-Sp}
The set of decomposable elements of $S(p)$ in level $\qoper$ is denoted $\Dqoper$. We aim to compute its cardinality.
Let us denote by $A$ the set of elements of the form $n+\mu$, where $n$ is an element at level $p-1$, and by $B$ the set of elements of level $p$ that belong to $\langle \gamma,\gamma+1\rangle$.

\begin{proposition}\label{prop:Dp}
 The following equality holds: $\Dqoper=A\cup B$. Moreover, the union is disjoint.
\end{proposition}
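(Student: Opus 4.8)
The plan is to prove the two assertions separately: first that $\Dqoper = A \cup B$, and then that the union $A \cup B$ is disjoint.

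First I would recall the setup. By definition $\Dqoper = I_p \setminus P$, the decomposable elements at level $p$, where $I_p = \left[p\mu, (p+1)\mu\right[$ (using $\conductoroper(S(p)) = p\mu$, Corollary~\ref{cor:conductor-Sp}, and $\multiplicityoper = \mu$). Since $S(p) = \langle \mu, \gamma, \gamma+1\rangle_{p\mu}$, every integer of level $\ge p$ already lies in $S(p)$, so $\Dqoper$ consists of exactly those elements of $I_p$ that are \emph{not} primitive, i.e.\ not equal to any of $\mu$, $\gamma$, $\gamma+1$ (these all lie at lower levels by Propositions~\ref{prop:preliminary_for_left_elts_2} and~\ref{prop:preliminary_for_level_p}). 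So in fact $\Dqoper = I_p$ — every element of level $p$ is decomposable — and I must show $I_p = A \cup B$. For the inclusion $A \cup B \subseteq \Dqoper$: an element $n + \mu$ with $n$ at level $p-1$ clearly lands at level $p$ (since adding $\mu$ shifts up one level) and is decomposable, being a sum $\mu + n$ with $n \in S(p)$, $n \ne 0$ and $\mu \ne 0$; similarly elements of $B$ are by definition in $\langle \gamma, \gamma+1\rangle$ at level $p$, hence decomposable and in $I_p$. For the reverse inclusion $\Dqoper \subseteq A \cup B$: take $z$ decomposable at level $p$, write $z = s + s'$ with $s, s'$ nonzero elements of $S(p)$. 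One of the summands, say $s$, must be $\ge \mu$ (the multiplicity); I would argue that if some summand equals $\mu$ then $z - \mu$ is at level $p-1$ and $z \in A$, while if no decomposition uses $\mu$ then $z$ must be built entirely from $\gamma, \gamma+1$ (using that the generators are $\mu, \gamma, \gamma+1$ together with the "tail" $\ge p\mu$, and level-$p$ elements are below $(p+1)\mu$ so cannot use the tail nontrivially), giving $z \in \langle \gamma, \gamma+1\rangle$, i.e.\ $z \in B$. Here Corollary~\ref{cor:preliminary_for_decomposable} pins down exactly which level-$p$ elements lie in $\langle\gamma,\gamma+1\rangle$.

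Next, for disjointness I would show $A \cap B = \emptyset$. Suppose $z \in A \cap B$. Then $z = n + \mu$ with $n$ at level $p-1$ and $n \in S(p)$, and also $z = \left(\tfrac{p}{2}+1\right)\gamma + j$ for some $0 \le j \le \tfrac{p}{2}+1$ by Corollary~\ref{cor:preliminary_for_decomposable}. From $z \in A$ we get $n = z - \mu \in \langle\gamma,\gamma+1\rangle$ at level $p-1 < p$ only if $n$ itself is expressible via $\gamma,\gamma+1$ — but I should be careful: $n$ at level $p-1$ just means $n$ is a left element, and left elements are the $C_{i,j}$ (Remark~\ref{rem:union-columns}), so $n = x\mu + i\gamma + j'$ for suitable $x, i, j'$. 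Then $z = (x+1)\mu + i\gamma + j'$, and comparing with $z = \left(\tfrac{p}{2}+1\right)\gamma + j$ modulo $\mu$ forces a congruence $i\gamma + j' \equiv \left(\tfrac{p}{2}+1\right)\gamma + j \pmod \mu$; since $i \le \tfrac{p}{2}$ while the other side corresponds to $i' = \tfrac{p}{2}+1$, the argument of Lemma~\ref{lemma:Lij-disjoint} (the residues $i(p/2+4)+j'$ are distinct modulo $\mu$ for the relevant ranges) yields a contradiction — the residue of a level-$(p-1)$-or-below column element cannot coincide with that of $\left(\tfrac{p}{2}+1\right)\gamma + j$. Alternatively, and perhaps more cleanly, I would argue directly from levels and Proposition~\ref{prop:preliminary_for_left_elts_2}: elements of $A$ are $\mu$ plus a level-$(p-1)$ element, and no element of $\langle\gamma,\gamma+1\rangle$ of level $< p$ is congruent to $-1 \pmod \mu$ whereas the elements of $B$ listed in Proposition~\ref{prop:preliminary_for_level_p} include $(p+1)\mu - 1$; tracking residues modulo $\mu$ of the two families separates them.

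The main obstacle I anticipate is the reverse inclusion $\Dqoper \subseteq A \cup B$: one must rule out "mixed" decompositions $z = a\mu + b\gamma + c(\gamma+1)$ with $a \ge 1$ and $b + c \ge 1$ that are not captured as $n + \mu$ with $n$ genuinely at level $p-1$ — but this is handled by noting $z - \mu = (a-1)\mu + (b+c)\gamma + c$ is a nonzero element of $S(p)$ of level $p-1$, so $z \in A$ automatically; the only real content is confirming that a level-$p$ element with $a = 0$ must lie in $\langle\gamma,\gamma+1\rangle$ (no contribution from the tail generators since $z < (p+1)\mu \le$ the tail bound, as the tail starts at $p\mu$ but its generators as group elements... ), which I would make rigorous by observing that any expression for $z$ using an element $\ge p\mu$ of the tail plus another nonzero element exceeds $(p+1)\mu$, impossible. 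Everything else is bookkeeping with the already-established Corollaries~\ref{cor:conductor-Sp} and~\ref{cor:preliminary_for_decomposable}.
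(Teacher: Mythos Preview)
Your proposal contains a genuine error in the first paragraph: you assert that ``in fact $\Dqoper = I_p$ --- every element of level $p$ is decomposable'', reasoning that the only primitives of $S(p)$ are $\mu,\gamma,\gamma+1$. This is false. The notation $S(p)=\langle \mu,\gamma,\gamma+1\rangle_{p\mu}$ adjoins \emph{all} integers $\ge p\mu$, and many of these are new minimal generators; by Corollary~\ref{cor:embedding_dimension} the embedding dimension of $S(p)$ is $\tfrac{p^2}{8}+\tfrac{3}{4}p+2$, not $3$. Hence $I_p \ne A\cup B$ (by cardinality: $\lvert A\cup B\rvert = \lvert\Dqoper\rvert = \tfrac{p^2}{8}+\tfrac{5}{4}p+\tfrac{3}{2} < \mu = \lvert I_p\rvert$). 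Fortunately this misconception does not actually propagate: when you prove the reverse inclusion you correctly begin with ``take $z$ decomposable at level $p$'', so once the faulty sentence is deleted your argument for $\Dqoper = A\cup B$ stands (with the small refinement that the right dichotomy is whether, in \emph{some} expression $z=a\mu+b\gamma+c(\gamma+1)$, one has $a\ge 1$ or $a=0$, rather than whether a given summand literally equals~$\mu$).

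For disjointness the paper's proof is considerably shorter than either of your routes. Rather than tracking congruences modulo $\mu$ or invoking Lemma~\ref{lemma:Lij-disjoint}, one simply reads off column positions: by Proposition~\ref{prop:preliminary_for_left_elts_2} every left element---hence every element of $A$, since adding $\mu$ preserves the column---lies in one of the leftmost $\mu-\bigl(\tfrac{p}{2}+3\bigr)$ columns, while by Proposition~\ref{prop:preliminary_for_level_p} every element of $B$ lies in one of the rightmost $\tfrac{p}{2}+2$ columns. These two column ranges are disjoint, so $A\cap B=\emptyset$. Your ``alternative'' approach is heading in exactly this direction; the paper just states it in two lines.
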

\begin{proof}
 That the equality holds is straightforward.
 
 That the union is disjoint follows from the fact that $A$ is contained in the the leftmost ${\mu-\left(\frac{p}{2}+3\right)}$ columns (as a consequence of  Proposition~\ref{prop:preliminary_for_left_elts_2}) and $B$ is contained in the $\frac{p}{2}+2$ rightmost columns (by Proposition~\ref{prop:preliminary_for_level_p}). 
\end{proof}

\begin{corollary}\label{cor:number_decomposable_elts}
  The number of decomposable elements of $S(p)$ in level $\qoper$ is given by:
$$\lvert \Dqoper\rvert=\frac{p^2}{8}+\frac{5}{4}p+3/2.$$
\end{corollary}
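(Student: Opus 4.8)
The plan is to lean on the disjoint decomposition $\Dqoper=A\cup B$ of Proposition~\ref{prop:Dp}, so that $\lvert\Dqoper\rvert=\lvert A\rvert+\lvert B\rvert$, and to count the two pieces in turn. Throughout I would use that $\conductoroper(S(p))=p\mu$ and $\qoper(S(p))=p$ (Corollaries~\ref{cor:conductor-Sp} and~\ref{cor:q-number-Sp}), so that level $\qoper$ is the interval $\left[p\mu,(p+1)\mu\right[$ while level $p-1$ is $\left[(p-1)\mu,p\mu\right[$.

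The set $B$ is handled at once. It consists of the elements of $\langle\gamma,\gamma+1\rangle$ sitting in level $\qoper=p$, i.e.\ in $\left[p\mu,(p+1)\mu\right[$, and Corollary~\ref{cor:preliminary_for_decomposable} identifies these as the $\frac{p}{2}+2$ pairwise distinct integers $\left(\frac{p}{2}+1\right)\gamma+j$ with $0\le j\le\frac{p}{2}+1$. Hence $\lvert B\rvert=\frac{p}{2}+2$.

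For $\lvert A\rvert$ I would first observe that translation by $\mu$ is injective, so $\lvert A\rvert$ is exactly the number of elements of $S(p)$ lying in level $p-1$. Since $p-1<\qoper$, every such element lies below the conductor, hence is a left element, hence belongs to one of the columns $C_{i,j}$ with $0\le i\le\frac{p}{2}$ and $0\le j\le i$ (Remark~\ref{rem:union-columns}); moreover these columns are pairwise disjoint (Lemma~\ref{lemma:Lij-disjoint}). The crux is that each column contributes exactly one integer to level $p-1$. For $C_{0,0}=\{x\mu\mid 0\le x<p\}$ this integer is $(p-1)\mu$. For $i\ge1$, Proposition~\ref{prop:preliminary_for_left_elts_2} places the base $i\gamma+j$ in level $2i-1$, and Lemma~\ref{lemma:the-set-Cij} gives $C_{i,j}=\{x\mu+i\gamma+j\mid 0\le x\le p-2i\}$, a set of $p-2i+1$ integers; since adding $\mu$ raises the level by one, these integers occupy the consecutive levels $2i-1,2i,\dots,(2i-1)+(p-2i)=p-1$, so precisely one of them lands in level $p-1$. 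Therefore $\lvert A\rvert$ equals the number of admissible index pairs $(i,j)$, that is, $\sum_{i=0}^{p/2}(i+1)=\frac{\left(\frac{p}{2}+1\right)\left(\frac{p}{2}+2\right)}{2}$.

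Combining, $\lvert\Dqoper\rvert=\lvert A\rvert+\lvert B\rvert=\frac{\left(\frac{p}{2}+1\right)\left(\frac{p}{2}+2\right)}{2}+\frac{p}{2}+2$; expanding and collecting terms (for instance, setting $k=\frac{p}{2}$ and using $\frac{(k+1)(k+2)}{2}+k+2=\frac{(k+2)(k+3)}{2}$) yields the announced value. The one step that genuinely needs care is the assertion that every column $C_{i,j}$ climbs to level $p-1$ and no further --- i.e.\ that its top entry sits in level $p-1$ --- which is precisely what Proposition~\ref{prop:preliminary_for_left_elts_2} (for the base level) together with the cardinality count of Lemma~\ref{lemma:the-set-Cij} (for the height) delivers; everything else is routine bookkeeping with a finite sum.
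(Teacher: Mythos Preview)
Your proof is correct and follows essentially the same approach as the paper: both decompose $\Dqoper=A\cup B$ via Proposition~\ref{prop:Dp}, read $\lvert B\rvert=\frac{p}{2}+2$ from Corollary~\ref{cor:preliminary_for_decomposable}, and identify $\lvert A\rvert$ with the number $\sum_{i=0}^{p/2}(i+1)$ of columns $C_{i,j}$, then sum to obtain $\frac{(p/2+2)(p/2+3)}{2}$. The paper asserts in one line that $\lvert A\rvert$ equals the number of columns containing left elements, whereas you supply the underlying reason---each column reaches level $p-1$ exactly once---via Proposition~\ref{prop:preliminary_for_left_elts_2} and Lemma~\ref{lemma:the-set-Cij}; this is the same argument, just spelled out in more detail.
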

\begin{proof}
  The cardinality of $A$ is precisely the number of columns containing the left elements (here we make use of Lemma~\ref{lemma:Lij-disjoint}), and that number is $\sum_{i=0}^{\frac{p}{2}}(i+1)$, since for each $i$ there are $i+1$~$j$'s.
  From Corollary~\ref{cor:preliminary_for_decomposable} the cardinality of $B$ is $\left(\frac{p}{2}+1\right)+1$.
  Now, using Proposition~\ref{prop:Dp} we get $$\lvert \Dqoper\rvert=1+\ldots+\left(\frac{p}{2}+1\right) + \left(\left(\frac{p}{2}+1\right)+1\right),$$
  which is the sum of the first $\frac{p}{2}+2$ positive integers. Therefore $$\lvert \Dqoper\rvert=\left(\frac{p}{2}+2\right)\left(\frac{p}{2}+3\right)/2.$$ The result follows.
\end{proof}

We may now state the main result of this section and probably of the whole paper.

\begin{theorem}\label{th:wo_Sp}
  Let $p$ be an even integer. The Eliahou number of $S(p)$ is given by:
$$\eliahouoper (S(p))=\frac{p}{4}\left(1-\frac{p}{2}\right).$$
\end{theorem}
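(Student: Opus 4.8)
The plan is to read the theorem off from the counts already established, by substituting them into the definition
$$\eliahouoper(S) = \lvert \primitivesoper\cap\leftsoper\rvert\,\lvert\leftsoper\rvert - \qoper\,\lvert\Dqoper\rvert + \rho.$$
First I would collect the four ingredients on the right-hand side, specialized to $S=S(p)$. By Corollary~\ref{cor:left-primitives-Sp}, $\lvert\primitivesoper(S(p))\cap\leftsoper(S(p))\rvert = 3$, since $\mu$, $\gamma$ and $\gamma+1$ are pairwise distinct and all lie strictly below the conductor $\conductoroper(S(p))=p\mu$. By Corollary~\ref{cor:q-number-Sp}, $\qoper(S(p)) = p$, and by Corollary~\ref{cor:rho-Sp}, $\rho(S(p)) = 0$; hence the last summand drops out and the formula reduces to $\eliahouoper(S(p)) = 3\,\lvert\leftsoper\rvert - p\,\lvert\Dqoper\rvert$.

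Next I would substitute the closed forms obtained earlier: $\lvert\leftsoper\rvert = \frac{p^3}{24}+\frac{3}{8}p^2+\frac{13}{12}p$ from Corollary~\ref{cor:number_left_elts}, and $\lvert\Dqoper\rvert = \frac{1}{2}\left(\frac{p}{2}+2\right)\left(\frac{p}{2}+3\right)$ from Corollary~\ref{cor:number_decomposable_elts}. Expanding, $3\,\lvert\leftsoper\rvert = \frac{p^3}{8}+\frac{9}{8}p^2+\frac{13}{4}p$ while $p\,\lvert\Dqoper\rvert = \frac{p^3}{8}+\frac{5}{4}p^2+3p$; upon subtraction the cubic terms cancel, the quadratic terms combine to $-\frac{p^2}{8}$, and the linear terms combine to $\frac{p}{4}$. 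This gives $\eliahouoper(S(p)) = -\frac{p^2}{8}+\frac{p}{4} = \frac{p}{4}\left(1-\frac{p}{2}\right)$, as claimed.

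I do not expect any real obstacle in this particular step: all the substantive work has already been carried out in Corollaries~\ref{cor:number_left_elts} and~\ref{cor:number_decomposable_elts} (which in turn rest on the pairwise disjointness of the columns $C_{i,j}$, the arithmetic of $\langle\gamma,\gamma+1\rangle$ inside the window $\left[0,(p+1)\mu\right[$, the identification $\conductoroper(S(p))=p\mu$, and the disjoint splitting $\Dqoper=A\cup B$). The one thing to watch here is the fraction bookkeeping; as a consistency check I would confirm the formula against the known special values $\eliahouoper(S(2))=0$ and $\eliahouoper(S(4))=-1$, the latter matching the Fromentin--Eliahou example $S(4)=\langle 14,22,23\rangle_{56}$.
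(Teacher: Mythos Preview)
Your proposal is correct and follows essentially the same route as the paper: both proofs plug Corollaries~\ref{cor:left-primitives-Sp}, \ref{cor:q-number-Sp}, \ref{cor:rho-Sp}, \ref{cor:number_left_elts} and~\ref{cor:number_decomposable_elts} into the definition of $\eliahouoper(S)$ and simplify $3\lvert\leftsoper\rvert - p\lvert\Dqoper\rvert$ to $-\tfrac{p^2}{8}+\tfrac{p}{4}$. Your added sanity check against $p=2,4$ is a nice touch not present in the paper.
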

\begin{proof}
From Corollary~\ref{cor:number_left_elts} it follows that 
\begin{equation}\label{eq:3l}
3\lvert L\rvert=\frac{p^3}{8}+\frac{9}{8}p^2+\frac{13}{4}p. 
\end{equation}

From Corollary~\ref{cor:number_decomposable_elts} we get that  
\begin{equation}\label{eq:dp}
p\lvert \Dqoper\rvert=p\left(\frac{p^2}{8}+\frac{5}{4}p+3\right)= \frac{p^3}{8}+\frac{5}{4}p^2+3p.
\end{equation}

By Corollary~\ref{cor:rho-Sp}, which states that $\rho(S(p))=0$, and using the fact that $S(p)$ has $3$ left primitives (Corollary~\ref{cor:left-primitives-Sp}), we can write:
$$\eliahouoper (S(p))=3\lvert L\rvert - p\lvert \Dqoper\rvert.$$

Now we just have to use (\ref{eq:3l}) and (\ref{eq:dp}) to complete the proof:
  \begin{align}
\eliahouoper (S(p))&=\left(\frac{p^3}{8}+\frac{9}{8}p^2+\frac{13}{4}p\right)-\left(\frac{p^3}{8}+\frac{5}{4}p^2+3p\right)= -\frac{p^2}{8}+\frac{p}{4}.\nonumber\qedhere
  \end{align}
\end{proof}

\begin{corollary}\label{cor:sgps_with_arbitrarily_large_negative_W0}
There exist numerical semigroups with arbitrarily large negative Eliahou number.
\end{corollary}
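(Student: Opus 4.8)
The plan is to read the conclusion off directly from Theorem~\ref{th:wo_Sp}, which already does all the work. That theorem exhibits, for every even positive integer $p$, a numerical semigroup $S(p)$ with
$$\eliahouoper(S(p)) = \frac{p}{4}\left(1-\frac{p}{2}\right) = \frac{p}{4} - \frac{p^2}{8}.$$
First I would observe that the right-hand side is a quadratic polynomial in $p$ with negative leading coefficient $-\tfrac18$, hence it tends to $-\infty$ as $p$ runs through the even positive integers. Concretely, given any real number $N$, it suffices to choose an even integer $p$ large enough that $\tfrac{p^2}{8} - \tfrac{p}{4} > N$ (for instance any even $p$ with $p > \tfrac12 + \sqrt{\tfrac14 + 8N}$), so that $\eliahouoper(S(p)) < -N$. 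Since $N$ was arbitrary, the Eliahou numbers of the semigroups $S(p)$ are negative and unbounded below.

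Next I would note that the family $\bigl(S(p)\bigr)_p$ in fact consists of pairwise distinct numerical semigroups, because the multiplicity $\mu(p) = \tfrac{p^2}{4} + 2p + 2$ is strictly increasing in $p$; consequently the negative values produced are not merely attained, but attained by genuinely different semigroups, and the set of realised Eliahou numbers $\{\,\tfrac{p}{4}(1-\tfrac p2)\mid p \text{ even}, p\ge 2\,\}$ is infinite. The only point requiring a word is that each $S(p)$ is a bona fide numerical semigroup (i.e.\ has finite complement in $\mathbb{N}$), which is already established earlier — for example Corollary~\ref{cor:conductor-Sp} computes its conductor to be $p\mu$.

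I do not expect any obstacle here: every nontrivial ingredient has been assembled in the proof of Theorem~\ref{th:wo_Sp}, and this corollary is just the elementary asymptotic reading of that closed formula. The only mild bookkeeping is to confirm that $p$ ranges over an infinite set of admissible values, which is guaranteed by the standing convention that $p$ denotes an even positive integer.
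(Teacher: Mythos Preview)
Your argument is correct and matches the paper's approach: the corollary is stated there without proof, as an immediate consequence of the closed formula in Theorem~\ref{th:wo_Sp}, and you have simply spelled out that $\tfrac{p}{4}(1-\tfrac{p}{2})\to -\infty$ as the even integer $p$ grows. The additional remarks about distinctness of the $S(p)$ and the explicit threshold for $p$ are accurate but not needed for the statement as phrased.
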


\subsection{The $S(p)$ satisfy Wilf's conjecture}\label{subsec:wilf-Sp}
As a consequence of Corollary~\ref{cor:number_decomposable_elts} we get the number of primitive elements of~$S(p)$.
\begin{corollary}\label{cor:embedding_dimension}
$\lvert\primitivesoper(S(p))\rvert = \frac{p^2}{8}+\frac{3}{4}p+2$.
\end{corollary}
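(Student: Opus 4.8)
The plan is to decompose the set $\primitivesoper(S(p))$ of minimal generators of $S(p)$ into the primitives lying below the conductor and the primitives lying in the conductor interval $I_{\qoper}$, count each part, and substitute the formulas already obtained. Concretely, I would establish
$$\primitivesoper(S(p)) = \bigl(\primitivesoper(S(p))\cap\leftsoper(S(p))\bigr)\cup\bigl(I_{\qoper}\setminus\Dqoper\bigr),$$
where the union is disjoint because $\leftsoper(S(p))$ consists of elements $<\conductoroper(S(p))$ while $I_{\qoper}$ consists of elements $\ge\conductoroper(S(p))$, and where $I_{\qoper}\cap\primitivesoper(S(p))=I_{\qoper}\setminus\Dqoper$ holds by the very definition of $\Dqoper$.

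To justify that no minimal generator lies outside $\leftsoper(S(p))\cup I_{\qoper}$, recall from Corollary~\ref{cor:conductor-Sp} that $\conductoroper(S(p))=p\mu$ and from Corollary~\ref{cor:q-number-Sp} that $\qoper(S(p))=p$, so $I_{\qoper}=\left[p\mu,(p+1)\mu\right[$. If $z\in S(p)$ with $z\ge(p+1)\mu$, then $z-\mu\ge p\mu=\conductoroper(S(p))$, hence $z-\mu\in S(p)$ and $z=\mu+(z-\mu)$ is decomposable; thus every primitive element is smaller than $(p+1)\mu$. Being an element of $S(p)$, such a primitive lies in $\leftsoper(S(p))$ if it is below $p\mu$ and in $I_{\qoper}$ otherwise, which yields the displayed decomposition.

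It then remains to count. By Corollary~\ref{cor:left-primitives-Sp}, $\lvert\primitivesoper(S(p))\cap\leftsoper(S(p))\rvert=\lvert\{\mu,\gamma,\gamma+1\}\rvert=3$. Since $\lvert I_{\qoper}\rvert=\multiplicityoper(S(p))=\mu$ and, by Corollary~\ref{cor:number_decomposable_elts}, $\lvert\Dqoper\rvert=\frac{p^2}{8}+\frac{5}{4}p+3$, we get $\lvert I_{\qoper}\setminus\Dqoper\rvert=\mu-\lvert\Dqoper\rvert$. Substituting $\mu=\mu(p)=\frac{p^2}{4}+2p+2$ gives
$$\lvert\primitivesoper(S(p))\rvert = 3+\left(\frac{p^2}{4}+2p+2\right)-\left(\frac{p^2}{8}+\frac{5}{4}p+3\right)=\frac{p^2}{8}+\frac{3}{4}p+2.$$
There is no genuine difficulty here; the only step needing a word of care is the structural one, namely that minimal generators can occur only among $\{\mu,\gamma,\gamma+1\}$ or at level $\qoper$, and this rests on Corollaries~\ref{cor:conductor-Sp} and~\ref{cor:q-number-Sp} together with the trivial observation that subtracting $\mu$ carries any element at a level above $\qoper$ back into $S(p)$.
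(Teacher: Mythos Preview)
Your proof is correct and follows exactly the same approach as the paper: both count the primitives as the three left generators plus the non-decomposable elements of $I_{\qoper}$, yielding $\lvert\primitivesoper(S(p))\rvert=3+\mu-\lvert\Dqoper\rvert$. Your version is simply more explicit in justifying why every primitive lies in $\leftsoper(S(p))\cup I_{\qoper}$, a point the paper treats as evident; incidentally, the value $\lvert\Dqoper\rvert=\frac{p^2}{8}+\frac{5}{4}p+3$ you use is the correct one (the $3/2$ in the statement of Corollary~\ref{cor:number_decomposable_elts} is a typo, as its own proof and the paper's subsequent computations confirm).
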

\begin{proof}
The number of primitive elements of $S(p)$ is $3$ (the ones smaller than the conductor) plus the non-decomposables of $\left[p\mu,p\mu+\mu\right[$. Therefore, by using Corollary~\ref{cor:number_decomposable_elts}, we get that
\begin{align}
\primitivesoper(S(p))&=3 + \mu -\lvert \Dqoper\rvert= 3+ \frac{p^2}{4}+2p+2 - \left(\frac{p^2}{8}+\frac{5}{4}p+3\right) = \frac{p^2}{8}+\frac{3}{4}p+2.\nonumber\qedhere
  \end{align}
\end{proof}

The next proposition shows that Wilf's conjecture holds for all numerical semigroups~$S(p)$.

\begin{proposition}\label{prop:wilf_holds_for_all_p}
  Let $p$ be an even positive integer. Then $$\wilfoper (S(p)) =\frac{p^5}{192}+\frac{5p^4}{64}+\frac{p^3}{4}-\frac{7p^2}{16}+\frac{p}{6}.$$ 
  In particular, $\wilfoper (S(p)) > 0$. 
\end{proposition}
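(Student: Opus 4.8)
The plan is to compute $\wilfoper(S(p)) = \lvert\primitivesoper\rvert\lvert\leftsoper\rvert - \conductoroper$ directly by substituting the three quantities that are already available in closed form. From Corollary~\ref{cor:embedding_dimension} we have $\lvert\primitivesoper(S(p))\rvert = \frac{p^2}{8}+\frac{3}{4}p+2$; from Corollary~\ref{cor:number_left_elts} we have $\lvert\leftsoper\rvert = \frac{p^3}{24}+\frac{3}{8}p^2+\frac{13}{12}p$; and from Corollary~\ref{cor:conductor-Sp} together with the definition $\mu = \frac{p^2}{4}+2p+2$ we have $\conductoroper(S(p)) = p\mu = \frac{p^3}{4}+2p^2+2p$. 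So the first step is simply to form the product of the first two polynomials and subtract the third, then collect terms to check that the result is exactly $\frac{p^5}{192}+\frac{5p^4}{64}+\frac{p^3}{4}-\frac{7p^2}{16}+\frac{p}{6}$. This is a routine polynomial identity; the only real care needed is bookkeeping with the fractions (a common denominator of $192$ works throughout).

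For the positivity claim, I would not try to prove $\wilfoper(S(p))>0$ by analysing the quintic as a real function (its $-\frac{7p^2}{16}$ term makes sign-chasing mildly annoying near $p=0$). Instead, since $p$ is an even positive integer, the smallest case is $p=2$; one checks $p=2$ by hand (the formula gives a positive value, and indeed $S(2)=\langle 7,9,10\rangle_{14}$ can be verified directly). For $p\ge 4$ I would argue that the leading terms dominate: write $\wilfoper(S(p)) = \frac{p^5}{192} + \frac{5p^4}{64} + \frac{p^3}{4} - \frac{7p^2}{16} + \frac{p}{6}$ and note that already $\frac{p^3}{4} - \frac{7p^2}{16} = \frac{p^2}{16}(4p-7) > 0$ for $p\ge 2$, while $\frac{p^5}{192}$, $\frac{5p^4}{64}$ and $\frac{p}{6}$ are all positive. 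Hence every grouped term is nonnegative and at least one is strictly positive, so $\wilfoper(S(p))>0$ for all even $p\ge 2$.

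The only genuinely error-prone step is the polynomial multiplication, so I would present the computation in an \texttt{align} environment with a few intermediate lines (first expand $\lvert\primitivesoper\rvert\cdot\lvert\leftsoper\rvert$ as a degree-$5$ polynomial, then subtract $p\mu$), rather than asserting the identity outright. It may also be worth recording the factored form $\wilfoper(S(p)) = \frac{p}{192}\bigl(p^4 + 15p^3 + 48p^2 - 84p + 32\bigr)$ as a sanity check, since one then only needs the quartic in the parentheses to be positive for $p\ge 2$, which follows from the same grouping $p^4+15p^3+48p^2-84p+32$ with $48p^2 - 84p = 12p(4p-7)>0$ for $p\ge 2$ and the remaining terms positive. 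No step here presents a conceptual obstacle; the proposition is essentially a corollary of the three cardinality formulas already proved.
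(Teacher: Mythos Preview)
Your proposal is correct and follows essentially the same route as the paper: substitute the closed forms from Corollaries~\ref{cor:embedding_dimension}, \ref{cor:number_left_elts}, and~\ref{cor:conductor-Sp} into $\wilfoper(S(p))=\lvert\primitivesoper\rvert\lvert\leftsoper\rvert-\conductoroper$ and expand. Your positivity argument via the grouping $\frac{p^3}{4}-\frac{7p^2}{16}=\frac{p^2}{16}(4p-7)>0$ for $p\ge 2$ is in fact slightly more explicit than the paper's, which simply evaluates at $p=2$ and asserts the quintic is increasing.
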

\begin{proof}
We use Corollary~\ref{cor:embedding_dimension} for $\lvert P\rvert$, Corollary~\ref{cor:number_left_elts} for $\lvert L\rvert$ and that the conductor of $S(p)$ is $p\mu$ (Corollary~\ref{cor:conductor-Sp}).

\begin{align}
&\wilfoper(S(p)) = \left(\frac{p^2}{8}+\frac{3}{4}p+2\right)\left(\frac{p^3}{24}+\frac{3}{8}p^2 + \frac{13}{12}p\right)-p\mu\nonumber\\
&\quad = \frac{p^5}{8\cdot 24}+\left(\frac{3}{8\cdot 8}+ \frac{3}{4\cdot 24}\right)p^4+\left( \frac{13}{8\cdot 12}+\frac{3\cdot 3}{4\cdot 8}+\frac{2}{24}\right)p^3+\left(\frac{3\cdot 13}{4\cdot 12}+\frac{6}{8} \right)p^2+\left( \frac{2\cdot 13}{12}\right)p-\mu\nonumber\\
&\quad =\frac{p^5}{3\cdot 2^6}+\left(\frac{15}{3\cdot 2^6}\right)p^4+\left( \frac{13+27+8}{3\cdot 2^5}\right)p^3+\left(\frac{25}{2^4 }\right)p^2+\left( \frac{2\cdot 13}{12}\right)p-\left(\frac{p^2}{4}+2p+2\right)\nonumber\\
&\quad =\frac{p^5}{192}+\frac{5p^4}{64}+\frac{p^3}{4}-\frac{7p^2}{16}+\frac{p}{6}.\nonumber
\end{align}

As for $p = 2$ we have that $\frac{p^5}{192}+\frac{5p^4}{64}+\frac{p^3}{4}-\frac{7p^2}{16}+\frac{p}{6}=2$ and $\frac{p^5}{192}+\frac{5p^4}{64}+\frac{p^3}{4}-\frac{7p^2}{16}+\frac{p}{6}$ is increasing, we get that $\wilfoper(S(p))>0$.
\end{proof}
The following remark  was somehow unexpected.
\begin{remark}\label{rem:unexpected}
 When $p$ grows, $\eliahouoper(S(p))$ becomes large negative while $\wilfoper(S(p))$ becomes large positive.
\end{remark}

\subsection{A table with numbers related to $S(p)$}\label{subsec:table-Sp}
We collect in Table~\ref{table:examples-Sp} some numbers related to $S(p)$.
Each row in the table contains several cells. The first one consists of the notation we use for the semigroup. The cells in the second and third columns contain respectively the Eliahou and the Wilf number. The values in the second and third columns illustrate Remark~\ref{rem:unexpected}. Then there is a cell containing the semigroup itself, showing the left minimal generators and the conductor. The cells in the last column contain the genus of the semigroups. 
The semigroups with large negative Eliahou numbers are deep in the semigroup tree. This point is further developed in Appendix~\ref{sec:remark-genus}.

\commentgray{GENUS?}

\begin{table}
$\begin{array}{|l|c|c|c|c|}
\hline
\text{Semigroup}&\eliahouoper&\wilfoper&\langle \text{ left generators }\rangle_c&\text{genus}\\ \hline\hline
S(2)&0&2&\langle 7,9,10\rangle_{14}&10\\
\hline
S(4)&-1&35&\langle 14,22,23\rangle_{56}&43\\
\hline
S(6)&-3&181&\langle 23,39,40\rangle_{138}&109\\
\hline
S(8)&-6&592&\langle 34,60,61\rangle_{272}&218\\
\hline
S(10)&-10&1510&\langle 47,85,86\rangle_{470}&380\\
\hline
S(12)&-15&3287&\langle 62,114,115\rangle_{744}&605\\
\hline
S(14)&-21&6405&\langle 79,147,148\rangle_{1106}&903\\
\hline
S(16)&-28&11496&\langle 98,184,185\rangle_{1568}&1284
\\\hline
S(18)&-36&19362&\langle 119,225,226\rangle_{2142}&1758
\\\hline
S(20)&-45&30995&\langle 142,270,271\rangle_{2840}&2335
\\\hline
\end{array}$
\caption{Numbers related to $S(p)$}
\label{table:examples-Sp}
\end{table}


\subsection{A visual way to look at $S(p)$}\label{subsec:visual-Sp}
Figure~\ref{fig:Sp-shadowedshapes} is intended to illustrate the general shape of a semigroup of the form $S(p)$. (One of the images was made taking $p=10$; the other $p=12$.) The light-grey tones are intended to better visualize several blocks of columns to be considered.

\begin{figure}
\includegraphics[width=\textwidth]{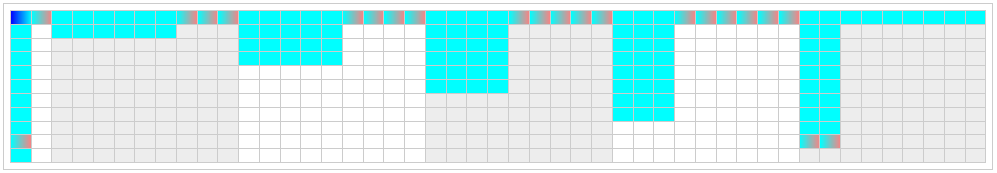}
\includegraphics[width=\textwidth]{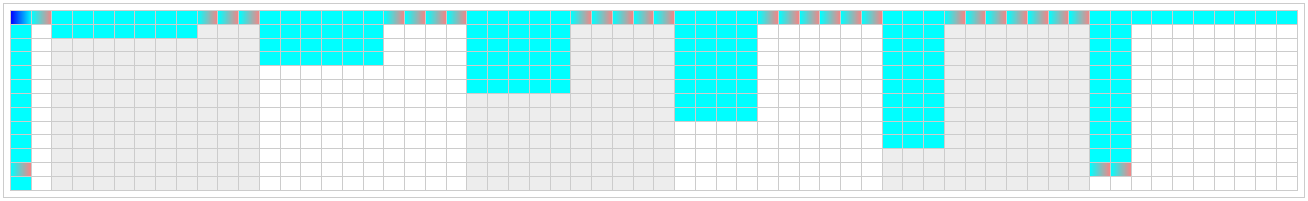}
\caption{Shapes of numerical semigroups of the form $S(p)$ with shadowed column blocks. \label{fig:Sp-shadowedshapes}}
\end{figure} 

One can split the tables in Figure~\ref{fig:Sp-shadowedshapes} into several blocks. We consider a block $C$ consisting of the $2$ leftmost columns; then we consider blocks $B_i$, $1\le i\le \frac{p}{2}$, consisting of $\frac{p}{2}+4$ consecutive columns. A block $B_i$ is to the left of a block $B_j$ if and only if $i<j$. We use a grey tone to visually separate blocks. Observe that the block $C$ together with the $p/2$ blocks $B_i$ partition the table. In fact, by construction, these blocks are pairwise disjoint. Furthermore, the blocks cover the table, since the number of columns covered is $\frac{p}{2}\left(\frac{p}{2}+4\right)+2=\mu$.
\smallskip

For any of the blocks $B$ above, denote by $B'$ the block obtained from $B$ by removing the uppermost row. The block $C'$ consists of a column of left elements and another that does not contain left elements. For each $i$ such that $1\le i\le \frac{p}{2}$, $B'_i$ consists of $\frac{p}{2}+2-i$ columns that contain some left element and $2+i$ that do not contain any ones. 

\section{Every integer is the Eliahou number of a numerical semigroup}\label{sec:Sptau}
Throughout this section, $p$ is an \emph{even} positive integer; $\mu$ and $\gamma$ are as in the previous section. Let, in addition, $\tau$ be a non-negative integer.

\subsection{Defining the numerical semigroup $S(p,\tau)$}\label{subsec:def-Sptau}
Consider the following integers:
\begin{itemize}
\item[] $m = \mu+\tau\frac{p}{2}=\frac{p}{2}\left(\frac{p}{2}+4+\tau\right) + 2 =\frac{p^2}{4}+2p+2+\tau\frac{p}{2}$;
\item[] $g = \gamma+\tau\left(p-1\right)= 2m-\left(\frac{p}{2}+4+\tau\right)=2m - (\tau+1)-\left(\frac{p}{2} +2+1\right)$;
\item[] $c = p\mu+\tau\left(\frac{p^2}{2}-1\right)=pm-\tau$.
\end{itemize}

Note that $c>0$. We define the semigroup $S(p,\tau)$ as being $\langle m,g,g+1\rangle_{c}$. Observe that $S(p)=S(p,0)$.
It is straightforward to observe that $c$ is the conductor of $S(p,\tau)$ (one can mimic the arguments that lead to Corollary~\ref{cor:conductor-Sp}). We register it as a proposition.
\begin{proposition}\label{prop:conductor_Sptau}
The conductor of $S(p,\tau)$ is $c=\conductoroper(S(p))+\tau\left(\frac{p^2}{2}-1\right) = pm-\tau$.
\end{proposition}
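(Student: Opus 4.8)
The plan is to verify that $c = pm - \tau$ satisfies the two defining properties of a conductor for $S(p,\tau) = \langle m, g, g+1\rangle_c$: first, that every integer $\ge c$ lies in $S(p,\tau)$, and second, that $c - 1 \notin S(p,\tau)$. The first property is immediate from the definition of the notation $\langle m,g,g+1\rangle_c$, since by construction all integers $\ge c$ are thrown in. So the real content is showing $c-1 = pm - \tau - 1 \notin S(p,\tau)$, i.e. that $pm-\tau-1$ cannot be written as a non-negative combination of $m$, $g$, and $g+1$ (it cannot be $\ge c$ either, being strictly smaller). The strategy is to mimic, step by step, the argument that culminated in Corollary~\ref{cor:conductor-Sp}, but with the shifted parameters.

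First I would record the analogue of Lemma~\ref{lemma:elts-gammagammap1}: the subsemigroup $\langle g, g+1\rangle$ consists exactly of the integers $ig + j$ with $0 \le j \le i$, by the same two-line computation ($ig + j = (i-j)g + j(g+1)$, and conversely $xg + y(g+1) = (x+y)g + y$ with $y \le x+y$). Then I would establish the analogue of Proposition~\ref{prop:preliminary-for-left-elts} and Proposition~\ref{prop:preliminary_for_left_elts_2}: for $1 \le i \le p/2$ and $0 \le j \le i$, the element $ig+j$ lies in the interval $[(2i-1)m,\ 2im - (\tfrac{p}{2}+3+\tau)]$. This follows because $g = 2m - (\tfrac{p}{2}+4+\tau)$, so $(2i-1)m \le ig$ reduces to $m \ge i(\tfrac{p}{2}+4+\tau)$, which holds since $m = \tfrac{p}{2}(\tfrac{p}{2}+4+\tau)+2 > i(\tfrac{p}{2}+4+\tau)$ for $i \le p/2$; and $ig + i \le 2im - (\tfrac{p}{2}+3+\tau)$ reduces to $i\bigl((\tfrac{p}{2}+4+\tau)-1\bigr) \ge \tfrac{p}{2}+3+\tau$, obvious for $i \ge 1$. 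The upshot is that no element of $\langle g, g+1\rangle$ of level below $p$ (i.e.\ below $pm$) is congruent to $-1 \pmod m$.

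Next I would check that the largest element of $\langle g,g+1\rangle$ strictly below level $p$, namely $\tfrac{p}{2}g + \tfrac{p}{2} = pm - \tfrac{p}{2}(\tfrac{p}{2}+3+\tau)$, is less than $pm - \tau - 1$; equivalently $\tfrac{p}{2}(\tfrac{p}{2}+3+\tau) > \tau + 1$, which is clear since $p \ge 2$ gives $\tfrac{p}{2}(\tfrac{p}{2}+3+\tau) \ge \tfrac{p}{2}+3+\tau > \tau+1$. Combined with the congruence observation, this shows $pm-\tau-1$ is not in $\langle g, g+1\rangle$ and is not obtained by adding a positive multiple of $m$ to any smaller element of $\langle g,g+1\rangle$ (such a sum would be $\equiv -1 \pmod m$ forcing the $\langle g,g+1\rangle$-part to be $\equiv -1 \pmod m$, impossible below level $p$, while at level $p$ or above one would already be $\ge pm > pm-\tau-1$ once $\tau \ge 0$ — here I would note the element at level $p$ in $\langle g,g+1\rangle$ is $(\tfrac p2+1)g \ge pm$, handled by the analogue of Proposition~\ref{prop:preliminary_for_level_p}). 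Hence $c - 1 \notin S(p,\tau)$, so the conductor is no smaller than $c$; since by definition it is no larger than $c$, it equals $c$. Finally I would note the two claimed alternative expressions for $c$ are just algebra: $pm - \tau = p(\mu + \tau\tfrac p2) - \tau = p\mu + \tau(\tfrac{p^2}{2}-1) = \conductoroper(S(p)) + \tau(\tfrac{p^2}{2}-1)$, using Corollary~\ref{cor:conductor-Sp}.

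The main obstacle — really the only non-routine point — is making sure the shift by $\tau$ does not spoil the inequality $m > i(\tfrac p2 + 4 + \tau)$ for $i$ up to $p/2$ and the gap estimates that keep the $\langle g,g+1\rangle$-elements away from the residue class $-1 \pmod m$; one must track the new term $\tau\tfrac p2$ in $m$ and $\tau(p-1)$ in $g$ carefully and confirm the borderline cases (small $p$, and the transition between level $p-1$ and level $p$) still behave, exactly as the borderline $p=2$ case had to be checked by hand in Corollary~\ref{cor:compareGammaMu}. Since the paper explicitly says one can "mimic the arguments that lead to Corollary~\ref{cor:conductor-Sp}," I would present this as a short verification rather than reproving every lemma in full detail.
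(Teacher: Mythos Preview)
Your approach is exactly what the paper intends: it offers no proof beyond the remark that one can ``mimic the arguments that lead to Corollary~\ref{cor:conductor-Sp},'' and you have carried out precisely that verification with the shifted parameters $m,g$. One small slip to fix: in two places you write that the relevant residue class is $-1 \pmod m$, but for general $\tau$ the target $pm-\tau-1$ is congruent to $-\tau-1 \pmod m$, not $-1$; fortunately the bound you actually proved, $ig+j \le 2im - (\tfrac{p}{2}+3+\tau)$, already excludes the residue $m-\tau-1$ (since $m-\tau-1 > m-(\tfrac{p}{2}+3+\tau)$), so the argument goes through once you correct the stated residue.
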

We can write the following easy consequences:
\begin{corollary}\label{cor:q-number-Spr}
$\qoper(S(p,\tau)) =\qoper(S(p))=p$.
\end{corollary}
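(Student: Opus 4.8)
The plan is to mimic exactly the argument that established $\qoper(S(p))=p$ in Corollary~\ref{cor:q-number-Sp}, namely to compute $\qoper(S(p,\tau))=\lceil c/m\rceil$ directly from the explicit values of $c$ and $m$. First I would invoke Proposition~\ref{prop:conductor_Sptau}, which already tells us that $\conductoroper(S(p,\tau))=c=pm-\tau$. Then, since by definition $\qoper(S(p,\tau))=\lceil \conductoroper(S(p,\tau))/\multiplicityoper(S(p,\tau))\rceil$ and the multiplicity is $m$ (this should be checked: $m$ is the smallest generator, and it is smaller than $g$ and $g+1$, and it is at most the conductor, so $m=\multiplicityoper(S(p,\tau))$), we need $\lceil (pm-\tau)/m\rceil$.

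The key computation is then immediate: $\frac{pm-\tau}{m}=p-\frac{\tau}{m}$, and since $\tau$ is a non-negative integer with $0\le \tau < m$ (one checks $\tau<m=\mu+\tau\frac p2$, which holds because $\mu>0$ and $\frac p2\ge 1$, so $\tau\frac p2\ge\tau$ forces $m=\mu+\tau\frac p2>\tau$), we have $p-1<p-\frac{\tau}{m}\le p$, whence $\lceil p-\frac{\tau}{m}\rceil=p$. The case $\tau=0$ recovers $\qoper(S(p))=p$ from Corollary~\ref{cor:q-number-Sp}, which is consistent. This establishes $\qoper(S(p,\tau))=p=\qoper(S(p))$.

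There is essentially no obstacle here — the statement is a one-line consequence of the explicit formula for the conductor in Proposition~\ref{prop:conductor_Sptau} together with the ceiling computation. The only point requiring a word of care is confirming that $m$ is genuinely the multiplicity of $S(p,\tau)$, i.e.\ that no smaller positive integer lies in the semigroup; this follows because the generators of $\langle m,g,g+1\rangle_c$ that are less than $c$ are $m,g,g+1$ with $m<g<g+1$, and the elements forced in by being $\ge c$ are all at least $c=pm-\tau\ge m$ (using $p\ge 2$), so indeed the least positive element is $m$. With that in hand the corollary follows.

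\begin{proof}
By Proposition~\ref{prop:conductor_Sptau}, $\conductoroper(S(p,\tau)) = c = pm-\tau$. The generators of $\langle m,g,g+1\rangle_c$ that are smaller than $c$ are $m<g<g+1$, and every element of the semigroup that is at least $c = pm-\tau\ge m$ (here using $p\ge 2$) is no smaller than $m$; hence $\multiplicityoper(S(p,\tau)) = m$. Therefore
\begin{equation*}
\qoper(S(p,\tau)) = \left\lceil \frac{\conductoroper(S(p,\tau))}{\multiplicityoper(S(p,\tau))} \right\rceil = \left\lceil \frac{pm-\tau}{m} \right\rceil = \left\lceil p - \frac{\tau}{m} \right\rceil.
\end{equation*}
Since $m = \mu + \tau\frac{p}{2}$ with $\mu>0$ and $\frac{p}{2}\ge 1$, we have $m > \tau$, so $0\le \frac{\tau}{m} < 1$, and thus $p-1 < p - \frac{\tau}{m} \le p$. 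Hence $\qoper(S(p,\tau)) = p$, which equals $\qoper(S(p))$ by Corollary~\ref{cor:q-number-Sp}.
\end{proof}
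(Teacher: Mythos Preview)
Your proof is correct and follows exactly the route the paper intends: the corollary is stated as an immediate consequence of Proposition~\ref{prop:conductor_Sptau}, and your ceiling computation $\lceil (pm-\tau)/m\rceil=p$ using $0\le\tau<m$ is precisely that immediate consequence spelled out. Your added checks that $m$ is genuinely the multiplicity and that $\tau<m$ are appropriate care for details the paper leaves implicit.
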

\begin{corollary}\label{cor:rho-Sptau}
 $\rho(S(p,\tau)) = \tau$.
\end{corollary}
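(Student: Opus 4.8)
\textbf{Proof plan for Corollary~\ref{cor:rho-Sptau}.}
The plan is to combine the definition $\rho(S) = \qoper(S)\cdot\multiplicityoper(S) - \conductoroper(S)$ with the three facts already at hand: Proposition~\ref{prop:conductor_Sptau} (the conductor of $S(p,\tau)$ equals $pm-\tau$), Corollary~\ref{cor:q-number-Spr} (the $\qoper$-number is $p$), and the fact that the multiplicity of $S(p,\tau)$ is $m$. The last point is not yet formally recorded, so the first step is to note it: $m$ is the smallest listed generator, and since $g = 2m - (\frac{p}{2}+4+\tau) > m$ and $g+1 > m$ (because $m = \frac{p^2}{4}+2p+2+\tau\frac{p}{2}$ is much larger than $\frac{p}{2}+4+\tau$ once $p\ge 2$), while $c = pm-\tau$ is also at least $m$, the multiplicity is indeed $\multiplicityoper(S(p,\tau)) = m$.

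With these ingredients the computation is immediate:
\begin{align}
\rho(S(p,\tau)) &= \qoper(S(p,\tau))\cdot\multiplicityoper(S(p,\tau)) - \conductoroper(S(p,\tau))\nonumber\\
&= p\cdot m - (pm-\tau) = \tau.\nonumber
\end{align}

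I do not anticipate a genuine obstacle here; the only point requiring a sentence of care is the justification that $\multiplicityoper(S(p,\tau)) = m$, i.e.\ that no generator or forced element below $m$ sneaks in. This follows at once from the inequalities $m < g < g+1$ and $m \le c$ noted above, so the multiplicity is the smallest minimal generator $m$. Everything else is a substitution of the already-proved values for $\qoper$, $\multiplicityoper$ and $\conductoroper$ into the defining formula for $\rho$.
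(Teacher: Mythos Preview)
Your proof is correct and follows exactly the approach the paper intends: the corollary is listed as an immediate consequence of Proposition~\ref{prop:conductor_Sptau} and Corollary~\ref{cor:q-number-Spr}, and your computation $\rho = p\cdot m - (pm-\tau) = \tau$ is precisely that. The extra sentence verifying $\multiplicityoper(S(p,\tau)) = m$ is a small bit of care the paper leaves implicit, but it is correct and harmless.
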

\begin{corollary}\label{cor:left-primitives-Sptau}
  $\lvert \primitivesoper(S(p,\tau))\cap \leftsoper(S(p,\tau))\rvert =\lvert \left\{ m,g,g+1\right\}\rvert= 3$.
\end{corollary}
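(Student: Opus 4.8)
The plan is to show that the three listed integers $m$, $g$, $g+1$ are pairwise distinct, that each of them is smaller than the conductor $c$ (so each is a left element), and that each of them is a minimal generator of $S(p,\tau)$ (so each is primitive); the claimed equality of cardinalities then follows because $\{m,g,g+1\}$ has exactly $3$ elements. Throughout I would work directly from the definitions $m=\frac{p^2}{4}+2p+2+\tau\frac{p}{2}$, $g=2m-\left(\frac{p}{2}+4+\tau\right)$, and hence $g+1=2m-\left(\frac{p}{2}+3+\tau\right)$, and I would quote Proposition~\ref{prop:conductor_Sptau} for the equality $\conductoroper(S(p,\tau))=c=pm-\tau$.

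The key is the elementary chain of inequalities
$$m < g < g+1 < 2m \le c,$$
valid for every even $p\ge 2$ and every $\tau\ge 0$. Indeed $g-m=m-\left(\frac{p}{2}+4+\tau\right)=\frac{p^2}{4}+\frac{3p}{2}-2+\tau\left(\frac{p}{2}-1\right)>0$; the inequality $g+1<2m$ is immediate since $\frac{p}{2}+3+\tau>0$; and $c-2m=(p-2)m+\frac{p}{2}+3>0$ (for $p=2$ this reads $\frac{p}{2}+3=4>0$). From $m<g<g+1$ the three integers are distinct, and from $g+1<c$ all three lie in $\leftsoper(S(p,\tau))$. For primitivity I would argue that $m$ is the multiplicity of $S(p,\tau)$ (it is the least positive element, since $S(p,\tau)$ is generated by $m,g,g+1$ together with the integers $\ge c$, all of which are $\ge m$), so every nonzero element of $S(p,\tau)$ is $\ge m$; consequently any \emph{decomposable} element of $S(p,\tau)$, being a sum of at least two nonzero elements, is $\ge 2m$. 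Since $m$, $g$ and $g+1$ all lie in the interval $\left[m,2m\right[$, none of them is decomposable, i.e. each is primitive. This gives $\primitivesoper(S(p,\tau))\cap\leftsoper(S(p,\tau))\supseteq\{m,g,g+1\}$; the reverse inclusion (that there are no other left primitives) is exactly the analogue of Corollary~\ref{cor:left-primitives-Sp} and is obtained by mimicking the analysis of $S(p)$, the left elements below level $p$ being decomposable combinations of $m$ and of $g,g+1$ — alternatively, one observes that the only candidates for left primitives are the minimal generators, and among $m,g,g+1$ there is nothing to remove since they form a minimal generating pattern by the interval argument above.

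I do not expect a genuine obstacle here: the statement is, by design, the $\tau$-analogue of the bundle of corollaries proved for $S(p)$ in Section~\ref{sec:Sp}, and the only care needed is (i) handling the boundary case $p=2$ in the inequality $c\ge 2m$, and (ii) spelling out, at whatever level of detail the referee wants, that no left element other than $m,g,g+1$ is a minimal generator — which is where the bulk of the (routine) bookkeeping sits, transported verbatim from Corollary~\ref{cor:conductor-Sp} and Remark~\ref{rem:union-columns} with $\mu,\gamma$ replaced by $m,g$.
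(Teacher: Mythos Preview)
Your approach is correct and considerably more explicit than the paper's, which simply records the corollary as an ``easy consequence'' of Proposition~\ref{prop:conductor_Sptau} without further argument. There is, however, a small computational slip in your chain of inequalities: the claim $2m\le c$ fails when $p=2$ and $\tau>0$. Indeed $c-2m=(p-2)m-\tau$, not $(p-2)m+\tfrac{p}{2}+3$; for $p=2$ this equals $-\tau\le 0$. The expression you wrote down is in fact $c-(g+1)=(p-2)m+\tfrac{p}{2}+3$, which is always positive and is precisely what is needed to place $g+1$ among the left elements. So the chain should simply read $m<g<g+1<c$, with the separate inequality $g+1<2m$ retained for the primitivity step. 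With this correction your argument goes through unchanged; nothing about the reverse inclusion or the distinctness claim is affected.
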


\subsection{A visual way to look at $S(p,\tau)$}\label{subsec:visual-Sptau}

Pictorial representations of the semigroups $S(p,\tau)$ are obtained through slightly modifying the images of Figure~\ref{fig:Sp-shapes}, which represent semigroups $S(p)$. It is straightforward to observe that the images obtained as described below represent the semigroups $S(p,\tau)$.

Consider a non-negative integer $\tau$ and, for each of the images of Figure~\ref{fig:Sp-shadowedshapes} (Section~\ref{subsec:visual-Sp}), consider the image obtained by adding $\tau$ columns to the right of each of the blocks $B_i$, for $1\le i\le \frac{p}{2}-1$, and assume that it contains no left element of the semigroup obtained. Concerning the block $B_{\frac{p}{2}}$, instead of adding columns to the right, $\tau$ columns are put at the left of the table. Block $B$ is therefore split into two parts. Block $C$ now consists of the columns $\tau+1$ and $\tau+2$. 

The result of this modification is in Figure~\ref{fig:Sptau-shadowedshapes}. (The first image was made taking $p=10, \tau=3$; the second, taking $p=12,\tau=3$.)
\begin{figure}
\includegraphics[width=\textwidth]{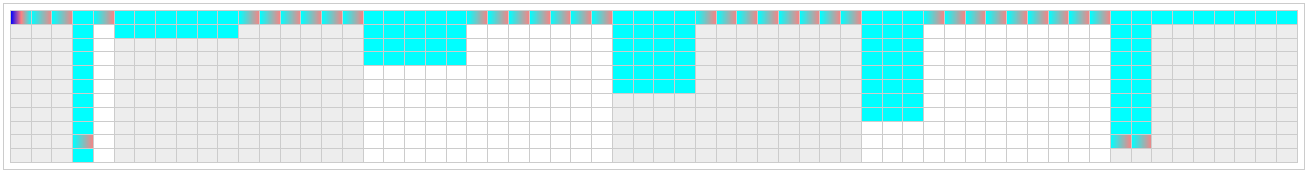}
\includegraphics[width=\textwidth]{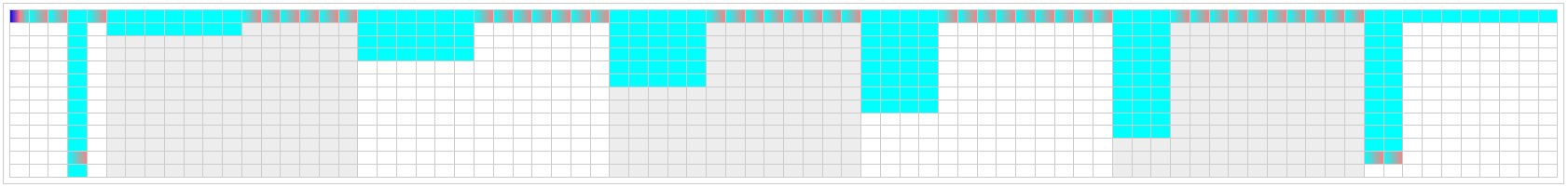}
\caption{Shapes of numerical semigroups of the form $S(p,\tau)$. \label{fig:Sptau-shadowedshapes}}
\end{figure}

When the integer $\tau$ is big, the shape of the representation obtained keeps the hight but becomes wider as (up to the scale) Figure~\ref{fig:Spbigtau} illustrates. 
\begin{figure}
\includegraphics[width=\textwidth]{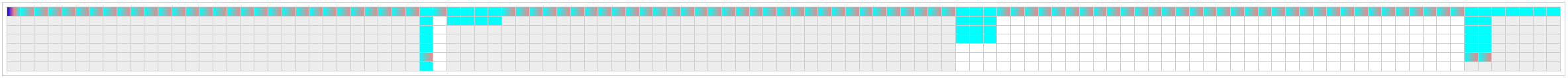}
\caption{Shape of a numerical semigroup of the form $S(p,\tau)$, with a big $\tau$. \label{fig:Spbigtau}}
\end{figure} 
\subsection{More on $S(p,\tau)$}\label{subsec:more-on-Sptau}
The following remarks can be proven in a straightforward way, as we did when obtaining the analogous results for $S(P)$, or relying on the images as the ones presented in Figures~\ref{fig:Sp-shadowedshapes} and~\ref{fig:Sptau-shadowedshapes}.
Note that each of the columns added leads to adding $p$ gaps and one new primitive element. 
In particular, the number of left elements and the number of decomposables in level $p$ remains unchanged. We register these facts as remarks.
\begin{remark}\label{rem:number_decomposables}
The semigroups $S(p)$ and $S(p,\tau)$ have the same number of decomposables in level $p$, that is, 
 $\lvert \Dqoper(S(p,\tau))\rvert=\lvert \Dqoper(S(p))\rvert$.
\end{remark}
\begin{remark}\label{rem:number_lefts-Sp-Sptau}
$\lvert\leftsoper(S(p,\tau))\rvert=\lvert\leftsoper(S(p))\rvert$.
\end{remark}

Our previous observations show that, excepting $\rho$ (Corollary~\ref{cor:rho-Sptau}), the parameters involved in the definition of the Eliahou number remain the same when replacing $S(p)$ by $S(p,\tau)$. Thus, we can use Theorem~\ref{th:wo_Sp} to obtain the following:
\begin{theorem}\label{th:formula_w0}
Let $p$ be an even positive integer and let $\tau$ be a non-negative integer. Then $$\eliahouoper (S(p,\tau))=\eliahouoper (S(p))+\rho(S(p,\tau)) = \frac{p}{4}\left(1-\frac{p}{2}\right)+\tau.$$
\end{theorem}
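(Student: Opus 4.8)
The plan is to establish Theorem~\ref{th:formula_w0} by showing that each of the four quantities entering the definition $\eliahouoper(S) = \lvert P\cap L\rvert\lvert \leftsoper \rvert - \qoper\lvert \Dqoper\rvert +\rho$ is unchanged when passing from $S(p)$ to $S(p,\tau)$, with the single exception of $\rho$, which increases by exactly $\tau$. Three of these four invariances are already recorded earlier in the excerpt: Corollary~\ref{cor:q-number-Spr} gives $\qoper(S(p,\tau))=\qoper(S(p))=p$; Corollary~\ref{cor:left-primitives-Sptau} gives $\lvert \primitivesoper(S(p,\tau))\cap \leftsoper(S(p,\tau))\rvert=3=\lvert \primitivesoper(S(p))\cap \leftsoper(S(p))\rvert$; and Corollary~\ref{cor:rho-Sptau} gives $\rho(S(p,\tau))=\tau$. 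The two remaining ingredients, $\lvert\leftsoper(S(p,\tau))\rvert=\lvert\leftsoper(S(p))\rvert$ and $\lvert \Dqoper(S(p,\tau))\rvert=\lvert \Dqoper(S(p))\rvert$, are exactly Remarks~\ref{rem:number_lefts-Sp-Sptau} and~\ref{rem:number_decomposables}.

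Granting these, the proof is a one-line substitution. First I would write
\begin{align}
\eliahouoper(S(p,\tau)) &= \lvert \primitivesoper(S(p,\tau))\cap \leftsoper(S(p,\tau))\rvert\cdot\lvert\leftsoper(S(p,\tau))\rvert - \qoper(S(p,\tau))\cdot\lvert\Dqoper(S(p,\tau))\rvert + \rho(S(p,\tau))\nonumber\\
&= 3\lvert\leftsoper(S(p))\rvert - p\lvert\Dqoper(S(p))\rvert + \tau,\nonumber
\end{align}
using the three corollaries and the two remarks cited above. Since, by the proof of Theorem~\ref{th:wo_Sp} and Corollary~\ref{cor:rho-Sp}, $\eliahouoper(S(p)) = 3\lvert\leftsoper(S(p))\rvert - p\lvert\Dqoper(S(p))\rvert + 0 = \frac{p}{4}\left(1-\frac{p}{2}\right)$, the right-hand side equals $\eliahouoper(S(p)) + \tau = \frac{p}{4}\left(1-\frac{p}{2}\right)+\tau$, which is the claim.

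The genuine mathematical content, therefore, lies entirely in justifying Remarks~\ref{rem:number_decomposables} and~\ref{rem:number_lefts-Sp-Sptau}, i.e., in understanding the effect of the parameter $\tau$ on the structure of the semigroup. The clean way to see this is the columnwise picture: passing from $S(p)$ to $S(p,\tau)$ inserts $\tau$ extra columns into each block $B_i$ (for $1\le i\le \frac p2$) in such a way that every inserted column lies strictly below level $p$ yet contains \emph{no} element of the semigroup — each such column contributes $p$ new gaps and exactly one new primitive element at level $p$, but no new left element and no new decomposable at level $p$. One would make this rigorous by mimicking, for $S(p,\tau)=\langle m,g,g+1\rangle_c$, the chain of lemmas used for $S(p)$: the analogue of Lemma~\ref{lemma:elts-gammagammap1} describes $\langle g,g+1\rangle$, the analogues of Corollary~\ref{cor:compareGammaMu} and Proposition~\ref{prop:preliminary_for_left_elts_2} (with $m,g$ in place of $\mu,\gamma$, and using $g=2m-(\tfrac p2+4+\tau)$, $c=pm-\tau$) locate the elements $ig+j$ among the levels, and Lemmas~\ref{lemma:the-set-Cij} and~\ref{lemma:Lij-disjoint} carry over to show that the left elements still decompose into disjoint columns $C_{i,j}$ of the same cardinalities $p-2i+1$ (the shift $g=\gamma+\tau(p-1)$ is chosen precisely so that $ig+j\equiv i(\tfrac p2+4)+j\pmod m$ up to the $\tau$-correction behaves as before), and the count of decomposables at level $p$ reduces, as in Corollary~\ref{cor:number_decomposable_elts}, to $\sum_{i=0}^{p/2}(i+1)$ columns plus the $\tfrac p2+2$ multiples of $g$ landing at level $p$. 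I expect this structural verification — confirming that the arithmetic of $m,g,c$ really does leave the left-element columns and the level-$p$ decomposables combinatorially identical to those of $S(p)$, despite the linear-in-$\tau$ shifts — to be the main obstacle; once it is in hand, Theorem~\ref{th:formula_w0} is immediate from Theorem~\ref{th:wo_Sp}.
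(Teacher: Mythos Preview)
Your proof is correct and follows exactly the paper's approach: the paper likewise observes that all parameters in the definition of $\eliahouoper$ are unchanged when passing from $S(p)$ to $S(p,\tau)$ except $\rho$, citing the same corollaries and remarks you do, and then invokes Theorem~\ref{th:wo_Sp}. Your additional sketch of how to justify Remarks~\ref{rem:number_decomposables} and~\ref{rem:number_lefts-Sp-Sptau} by rerunning the column analysis with $m,g,c$ in place of $\mu,\gamma,p\mu$ goes beyond what the paper spells out (it simply declares these remarks straightforward from the pictorial description), but it is the right way to make them rigorous.
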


Let $n$ be an integer and take an even integer $p$ such that $\frac{p}{4}\left(1-\frac{p}{2}\right)\le n$. Now take $\tau = n- \frac{p}{4}\left(1-\frac{p}{2}\right)$, which is a non-negative integer. Then $\eliahouoper (S(p,\tau))=\frac{p}{4}\left(1-\frac{p}{2}\right)+\tau=\frac{p}{4}\left(1-\frac{p}{2}\right)+n- \frac{p}{4}\left(1-\frac{p}{2}\right)=n$. We have proved the following:
\begin{corollary}\label{cor:every_integer_is_Eliahou_number}
Every integer is the Eliahou number of some numerical semigroup.
\end{corollary}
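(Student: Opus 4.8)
The plan is to combine Theorem~\ref{th:formula_w0} with a simple observation about the range of the quantity $\frac{p}{4}(1-\frac{p}{2})$ as $p$ ranges over the even positive integers, together with the fact that $\tau$ may be taken to be any non-negative integer. The point is that for a given target integer $n$ we must produce an even positive $p$ and a non-negative $\tau$ with $\frac{p}{4}(1-\frac{p}{2})+\tau = n$, and this is possible precisely when we can find an even positive $p$ with $\frac{p}{4}(1-\frac{p}{2})\le n$.

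First I would observe that $f(p):=\frac{p}{4}(1-\frac{p}{2}) = \frac{p}{4}-\frac{p^2}{8}$ is a downward-opening quadratic in $p$, so $f(p)\to -\infty$ as $p\to\infty$ through the even positive integers. Hence, given any integer $n$, there certainly exists an even positive integer $p$ with $f(p)\le n$; concretely one may take, say, any even $p\ge 2$ large enough (for instance $p\ge 2|n|+2$ suffices, since then $\frac{p^2}{8}\ge\frac{p}{4}+|n|\ge\frac{p}{4}-n$, giving $f(p)\le n$). Fix such a $p$.

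Next I would set $\tau := n - f(p) = n - \frac{p}{4}(1-\frac{p}{2})$. By the choice of $p$ this is a non-negative integer (it is an integer because $f(p)\in\mathbb{Z}$ for even $p$, as is visible from the integer values in Table~\ref{table:examples-Sp}, or directly since $\frac{p}{4}-\frac{p^2}{8}=\frac{p(2-p)}{8}$ and $p(2-p)$ is divisible by $8$ when $p$ is even). Then $S(p,\tau)$ is a well-defined numerical semigroup, and Theorem~\ref{th:formula_w0} gives $\eliahouoper(S(p,\tau)) = f(p)+\tau = f(p) + (n-f(p)) = n$. This exhibits a numerical semigroup whose Eliahou number is exactly $n$, completing the proof.

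There is essentially no obstacle here: the only thing to check carefully is the integrality of $\tau$, i.e.\ that $f(p)$ is an integer for even $p$, which is immediate from $f(p)=\frac{p(2-p)}{8}$ together with the fact that among $p$ and $2-p$ (both even) their product is a multiple of $8$ — indeed writing $p=2k$ gives $f(p)=\frac{2k(2-2k)}{8}=\frac{k(1-k)}{2}$, and $k(1-k)$ is a product of consecutive integers, hence even. All the heavy lifting has already been done in establishing Theorem~\ref{th:formula_w0}.
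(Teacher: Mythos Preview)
Your proof is correct and follows essentially the same approach as the paper: pick an even positive $p$ with $\frac{p}{4}(1-\frac{p}{2})\le n$, set $\tau=n-\frac{p}{4}(1-\frac{p}{2})$, and invoke Theorem~\ref{th:formula_w0}. The only difference is that you supply an explicit bound for $p$ and spell out the integrality of $f(p)$, both of which the paper leaves implicit.
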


One can be more precise by considering two different cases, depending on the sign of $n$:
\begin{corollary}\label{cor:every_positive_integer_is_Eliahou_number}
Let $n$ be a non-negative integer. Then $\eliahouoper (S(2,n))=n$.  
\end{corollary}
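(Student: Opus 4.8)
The plan is to specialize Theorem~\ref{th:formula_w0} to the case $p=2$ and check that the arithmetic collapses to exactly $\eliahouoper(S(2,n))=n$. First I would recall from Theorem~\ref{th:formula_w0} that for any even positive integer $p$ and any non-negative integer $\tau$ we have
$$\eliahouoper(S(p,\tau)) = \frac{p}{4}\left(1-\frac{p}{2}\right)+\tau.$$
Now substitute $p=2$: the factor $\frac{p}{4}\left(1-\frac{p}{2}\right)$ becomes $\frac{2}{4}\left(1-\frac{2}{2}\right)=\frac12\cdot 0=0$, so the formula reduces to $\eliahouoper(S(2,\tau))=\tau$. Taking $\tau=n$ (which is legitimate precisely because $n$ is assumed non-negative, and $\tau$ is allowed to be any non-negative integer) gives $\eliahouoper(S(2,n))=n$, which is the claim.

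It is worth noting that this is consistent with the more general Corollary~\ref{cor:every_integer_is_Eliahou_number}: there one chooses an even $p$ with $\frac{p}{4}\left(1-\frac{p}{2}\right)\le n$ and sets $\tau=n-\frac{p}{4}\left(1-\frac{p}{2}\right)$; for $n\ge 0$ the smallest admissible choice is simply $p=2$, for which the offset $\frac{p}{4}\left(1-\frac{p}{2}\right)$ vanishes and $\tau=n$. One could also sanity-check the base case $n=0$ against Table~\ref{table:examples-Sp}, where $S(2)=S(2,0)$ is recorded as having Eliahou number $0$.

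The proof is essentially a one-line substitution, so there is no real obstacle; the only thing to be careful about is that the construction of $S(p,\tau)$ in Section~\ref{sec:Sptau} requires $\tau$ to be a non-negative integer, which is exactly the hypothesis on $n$, so $S(2,n)$ is a well-defined numerical semigroup and Theorem~\ref{th:formula_w0} applies to it without further ado.

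\begin{proof}
By Theorem~\ref{th:formula_w0}, for any even positive integer $p$ and any non-negative integer $\tau$ we have $\eliahouoper(S(p,\tau))=\frac{p}{4}\left(1-\frac{p}{2}\right)+\tau$. Setting $p=2$ gives $\frac{p}{4}\left(1-\frac{p}{2}\right)=\frac{1}{2}(1-1)=0$, hence $\eliahouoper(S(2,\tau))=\tau$. Since $n$ is a non-negative integer, $\tau=n$ is an admissible choice, and we obtain $\eliahouoper(S(2,n))=n$.
\end{proof}
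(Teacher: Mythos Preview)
Your proof is correct and follows exactly the approach implicit in the paper: the corollary is stated there without proof, as an immediate specialization of Theorem~\ref{th:formula_w0} (and of the argument preceding Corollary~\ref{cor:every_integer_is_Eliahou_number}) to the case $p=2$, where the term $\frac{p}{4}\left(1-\frac{p}{2}\right)$ vanishes. There is nothing to add.
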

\begin{corollary}\label{cor:every_negative_integer_is_Eliahou_number}
Let $n$ be a negative integer and let $p$ be an even integer such that $p\ge 1+\sqrt{1-8n}$. Then $\eliahouoper \left(S(p,n-\frac{p}{4}\left(1-p/2\right))\right)=n$.  
\end{corollary}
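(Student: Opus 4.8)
This is a specialization of Corollary~\ref{cor:every_integer_is_Eliahou_number} obtained by pinning down, for a negative target $n$, exactly how large the (even) parameter $p$ must be taken. The plan is to reduce everything to Theorem~\ref{th:formula_w0}: all that must be checked is that the prescribed value $\tau = n-\frac{p}{4}\left(1-\frac{p}{2}\right)$ is a legitimate parameter for the construction, i.e.\ a \emph{non-negative integer}, and then the formula $\eliahouoper(S(p,\tau))=\frac{p}{4}\left(1-\frac{p}{2}\right)+\tau$ gives the result by direct substitution.

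First I would check integrality: since $\eliahouoper(S(p))=\frac{p}{4}\left(1-\frac{p}{2}\right)$ by Theorem~\ref{th:wo_Sp} is a difference of cardinalities, it is an integer, hence so is $\tau=n-\frac{p}{4}\left(1-\frac{p}{2}\right)$. Next I would check non-negativity: $\tau\ge 0$ is equivalent to $n\ge\frac{p}{4}-\frac{p^2}{8}$, i.e.\ to $p^2-2p+8n\ge 0$. Since $n<0$ we have $1-8n>0$, so the roots of $p^2-2p+8n$ are the reals $1\pm\sqrt{1-8n}$; the negative root is irrelevant for an even positive $p$, so $p^2-2p+8n\ge 0$ holds precisely when $p\ge 1+\sqrt{1-8n}$, which is our hypothesis. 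Thus $\tau$ is a non-negative integer and $S(p,\tau)$ is defined.

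With $\tau$ admissible, Theorem~\ref{th:formula_w0} applies and yields
\begin{align}
\eliahouoper\!\left(S\!\left(p,\,n-\tfrac{p}{4}\!\left(1-\tfrac{p}{2}\right)\right)\right)
&=\frac{p}{4}\!\left(1-\frac{p}{2}\right)+\left(n-\frac{p}{4}\!\left(1-\frac{p}{2}\right)\right)=n,\nonumber
\end{align}
which is the claim. There is no real obstacle here; the only mildly delicate point is the quadratic-inequality bookkeeping in the previous paragraph, namely recognizing that the stated threshold $p\ge 1+\sqrt{1-8n}$ is exactly the condition guaranteeing $\tau\ge 0$ rather than merely a sufficient one.
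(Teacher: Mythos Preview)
Your proposal is correct and follows exactly the line the paper takes: the paper derives Corollary~\ref{cor:every_integer_is_Eliahou_number} by setting $\tau=n-\frac{p}{4}(1-p/2)$ and invoking Theorem~\ref{th:formula_w0}, and then states the present corollary without separate proof as the negative-$n$ specialization. Your write-up actually supplies more detail than the paper does, in particular the quadratic-inequality verification that $p\ge 1+\sqrt{1-8n}$ is precisely the threshold making $\tau\ge 0$.
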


\subsection{The $S(p,\tau)$ satisfy Wilf's conjecture}\label{subsec:wilf-Sptau}
It is clear that each of the columns added leads to adding gaps and a new primitive element. Thus, we can write the following remark.
\begin{remark}\label{rem:number_primitives}
$\lvert \primitivesoper(S(p,\tau))\rvert = \lvert \primitivesoper(S(p))\rvert + \tau\frac{p}{2}$.
\end{remark}
By using Corollary~\ref{cor:embedding_dimension}, we get
\begin{remark}\label{rem:number_primitives-Sptau}
$\lvert \primitivesoper(S(p,\tau))\rvert = \frac{p^2}{8}+\frac{3}{4}p+2 + \tau\frac{p}{2}=\frac{p^2}{8}+\frac{3+2\tau}{4}p+2 = \frac{1}{8}\left(p^2+(6+4\tau)p+16)\right)$.
\end{remark}

The following proposition gives, in particular, a formula (depending on $p$ and $\tau$) for the Wilf number of $S(p,\tau)$.
\begin{proposition}\label{prop:wilf_holds_for_all_p_rho}
Let $p$ be an even positive integer and let $\tau$ be a non-negative integer.
Then 
\begin{align}
\wilfoper (S(p,\tau))&= \wilfoper(S(p))   + \tau\cdot\left(\frac{p^4}{48}+\frac{3}{16}p^3 + \frac{1}{24}p^2 + 1\right)\nonumber
\end{align}
In particular, we get that $\wilfoper (S(p,\tau)) > 0$. 
\end{proposition}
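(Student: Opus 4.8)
The plan is to establish the stated formula by reducing everything to the already-computed quantities for $S(p)$ and the added-column bookkeeping recorded in Remarks~\ref{rem:number_lefts-Sp-Sptau} and~\ref{rem:number_primitives-Sptau}. First I would write out the definition $\wilfoper(S(p,\tau)) = \lvert\primitivesoper(S(p,\tau))\rvert\cdot\lvert\leftsoper(S(p,\tau))\rvert - \conductoroper(S(p,\tau))$ and substitute the three ingredients: $\lvert\leftsoper(S(p,\tau))\rvert = \lvert\leftsoper(S(p))\rvert$ (Remark~\ref{rem:number_lefts-Sp-Sptau}), $\lvert\primitivesoper(S(p,\tau))\rvert = \lvert\primitivesoper(S(p))\rvert + \tau\frac{p}{2}$ (Remark~\ref{rem:number_primitives}), and $\conductoroper(S(p,\tau)) = \conductoroper(S(p)) + \tau\left(\frac{p^2}{2}-1\right)$ (Proposition~\ref{prop:conductor_Sptau}, since $\conductoroper(S(p))=p\mu$). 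Expanding the product gives
\[
\wilfoper(S(p,\tau)) = \lvert\primitivesoper(S(p))\rvert\,\lvert\leftsoper(S(p))\rvert - \conductoroper(S(p)) + \tau\cdot\left(\tfrac{p}{2}\lvert\leftsoper(S(p))\rvert - \left(\tfrac{p^2}{2}-1\right)\right),
\]
and the first three terms are exactly $\wilfoper(S(p))$, so it only remains to check that $\frac{p}{2}\lvert\leftsoper(S(p))\rvert - \left(\frac{p^2}{2}-1\right)$ equals $\frac{p^4}{48}+\frac{3}{16}p^3 + \frac{1}{24}p^2 + 1$.

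The next step is that short polynomial verification: using Corollary~\ref{cor:number_left_elts}, $\lvert\leftsoper(S(p))\rvert = \frac{p^3}{24}+\frac{3}{8}p^2 + \frac{13}{12}p$, so $\frac{p}{2}\lvert\leftsoper(S(p))\rvert = \frac{p^4}{48}+\frac{3}{16}p^3 + \frac{13}{24}p^2$. Subtracting $\frac{p^2}{2}-1 = \frac{12}{24}p^2 - 1$ yields $\frac{p^4}{48}+\frac{3}{16}p^3 + \frac{1}{24}p^2 + 1$, which matches. This is routine and presents no obstacle.

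Finally, for the positivity claim "$\wilfoper(S(p,\tau)) > 0$", I would argue that both summands are positive: $\wilfoper(S(p)) > 0$ by Proposition~\ref{prop:wilf_holds_for_all_p}, and the coefficient $\frac{p^4}{48}+\frac{3}{16}p^3 + \frac{1}{24}p^2 + 1$ multiplying $\tau$ is manifestly positive for every positive integer $p$, while $\tau \ge 0$; hence the product $\tau\cdot\left(\cdots\right) \ge 0$ and the total is strictly positive. There is essentially no difficult step here — the only thing to be careful about is confirming that the earlier remarks genuinely give $\lvert\leftsoper\rvert$ unchanged and $\lvert\primitivesoper\rvert$ increased by exactly $\tau\frac{p}{2}$, i.e.\ that the $\tau$ added columns in each of the $\frac{p}{2}$ blocks contribute one new primitive each and no new left element, which is precisely what the visual description in Section~\ref{subsec:visual-Sptau} and Remark~\ref{rem:number_primitives} assert. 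So the main (and mild) obstacle is just making sure the bookkeeping of the column-addition construction is invoked correctly rather than any genuine computation.
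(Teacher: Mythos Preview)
Your proposal is correct and follows essentially the same route as the paper: substitute the relations from Remark~\ref{rem:number_lefts-Sp-Sptau}, Remark~\ref{rem:number_primitives}, and Proposition~\ref{prop:conductor_Sptau} into the definition of $\wilfoper$, separate out $\wilfoper(S(p))$, and then evaluate the $\tau$-coefficient using Corollary~\ref{cor:number_left_elts}. The paper writes the $\tau$-coefficient as $\frac{p}{2}(\lvert\leftsoper\rvert - p) + 1$ before expanding, whereas you write it as $\frac{p}{2}\lvert\leftsoper\rvert - \left(\frac{p^2}{2}-1\right)$, but these are identical, and your explicit justification of positivity via Proposition~\ref{prop:wilf_holds_for_all_p} is exactly what the paper leaves implicit.
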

\begin{proof}
Recall that $S(p)$ and $S(p,\tau)$ have the same number of left elements (Remark~\ref{rem:number_lefts-Sp-Sptau}), which is given by Corollary~\ref{cor:number_left_elts}. Using Remark~\ref{rem:number_primitives} and Proposition~\ref{prop:conductor_Sptau}, we can write the following sequence of equalities.
\begin{align}
\lvert \primitivesoper(S(p,\tau))\rvert\lvert\leftsoper\rvert-\conductoroper(S(p,\tau))  
&= \left(\lvert\primitivesoper(S(p))\rvert + \tau\frac{p}{2}\right)\lvert\leftsoper\rvert-\left(\conductoroper(S(p))+\tau\left(\frac{p^2}{2}-1\right)\right)\nonumber\\
&= \lvert\primitivesoper(S(p))\rvert  \lvert\leftsoper\rvert -\conductoroper(S(p))   + \tau\frac{p}{2}\lvert\leftsoper\rvert-\tau\left(\frac{p^2}{2}-1\right)\nonumber\\
&= \wilfoper(S(p))   + \tau\left(\frac{p}{2}\left(\lvert\leftsoper\rvert-p\right) + 1\right)\nonumber\\
&= \wilfoper(S(p))   + \tau\left(\frac{p}{2}\left(\frac{p^3}{24}+\frac{3}{8}p^2 + \frac{13}{12}p-p\right) + 1\right)\nonumber\\
&= \wilfoper(S(p))   + \tau\left(\frac{p^4}{48}+\frac{3}{16}p^3 + \frac{1}{24}p^2 + 1\right)\nonumber\qedhere
\end{align}
\end{proof}


\subsection{A table with numbers related to $S(p,\tau)$}\label{subsec:table-Sptau}
Analogously to what has been done in Section~\ref{subsec:table-Sp}, we collect in Table~\ref{table:examples-Sptau} some numbers related to $S(p,\tau)$.
\begin{table}
$\begin{array}{|l|c|c|c|c|}
\hline
\text{Semigroup}&\eliahouoper&\wilfoper&\langle \text{ left generators }\rangle_c&\text{genus}\\ \hline\hline
S(2,3)&3&11&\langle 10,12,13\rangle_{17}&13
\\\hline
S(4,3)&2&92&\langle 20,31,32\rangle_{77}&64
\\\hline
S(6,3)&0&391&\langle 32,54,55\rangle_{189}&160
\\\hline
S(8,3)&-3&1147&\langle 46,81,82\rangle_{365}&311
\\\hline
S(10,3)&-7&2713&\langle 62,112,113\rangle_{617}&527
\\\hline
S(12,3)&-12&5576&\langle 80,147,148\rangle_{957}&818
\\\hline
S(14,3)&-18&10377&\langle 100,186,187\rangle_{1397}&1194
\\\hline
S(16,3)&-25&17931&\langle 122,229,230\rangle_{1949}&1665
\\\hline
S(18,3)&-33&29247&\langle 146,276,277\rangle_{2625}&2241
\\\hline
S(20,3)&-42&45548&\langle 172,327,328\rangle_{3437}&2932
\\\hline
\end{array}$
\caption{$S(p,\tau)$ numbers}
\label{table:examples-Sptau}
\end{table}


\section{There are infinitely many numerical semigroups with a given Eliahou number}\label{sec:Sijptau}
Throughout this section, $p$ is an \emph{even} positive integer; $m$, $g$ , $c$ and $\tau$ are as in previous section. 
For each numerical semigroup $S(p,\tau)$ and each pair $(i,j)$ of non-negative integers, we define a numerical semigroup $S^{(i,j)}(p,\tau)$. We will prove that its Eliahou number is $\eliahouoper(S(p,\tau))$ (Theorem~\ref{th:same_w0-Sijptau}).

\subsection{Defining numerical semigroups $S^{(i,j)}(p,\tau)$}\label{sec:def-Sijptau}
Let $i$ and $j$ be non-negative integers and define $m^{(i,j)}$, $g^{(i,j)}$ and $c^{(i,j)}$ as follows. 
\begin{itemize}
\item[] $m^{(i,j)} = m+ j\frac{p}{2}$;
\item[] $g^{(i,j)} = g + j\cdot(p-1) + i\cdot m^{(i,j)}$;
\item[] $c^{(i,j)} = c + j\frac{p^2}{2} + i\left(\frac{p}{2}+1\right)m^{(i,j)}$.
\end{itemize}

Now, define: $$S^{(i,j)}(p,\tau)=\langle m^{(i,j)},g^{(i,j)},g^{(i,j)}+1\rangle_{c^{(i,j)}}.$$
The following remark is obvious and will be used without explicit mention.
\begin{remark}\label{rem:S00pr--Spr}
$S^{(0,0)}(p,\tau)=S(p,\tau)$.
\end{remark}

Before continuing, let us look at some pictures. Figure~\ref{fig:column-shades} illustrates blocks similar to those that we already encountered in Sections~\ref{subsec:visual-Sp} and~\ref{subsec:visual-Sptau}. From now on we refer to this kind of blocks as column-blocks. The second image is obtained from the first by adding one column to the right of each column-block. The first image represents $S(10,3)$, the second represents $S^{(0,1)}(10,3)$.

Recall that the construction of $S(p,\tau)$ from $S(p)$ was similar: it was based on on adding some columns to each column block. 

\begin{figure}
\includegraphics[width=\textwidth]{10-3-1-shape.pdf}
\includegraphics[width=\textwidth]{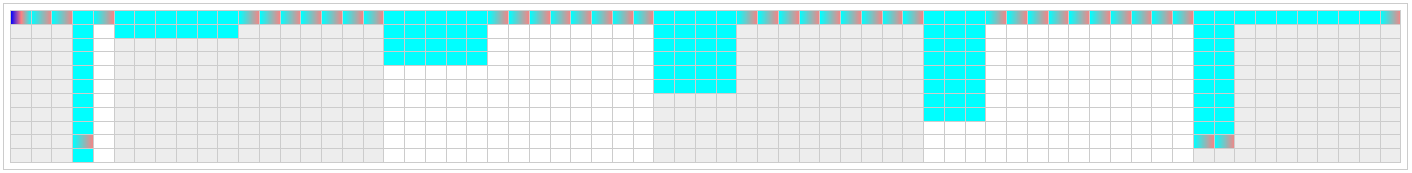}
\caption{Adding one column per column-block. \label{fig:column-shades}}
\end{figure} 
Figure~\ref{fig:row-shades} illustrates blocks that we will call row-blocks. The second image is obtained from the first by adding one row to the top of each row-block. The first image represents $S(10,3)$, the second represents $S^{(1,0)}(10,3)$.

\begin{figure}
\includegraphics[width=\textwidth]{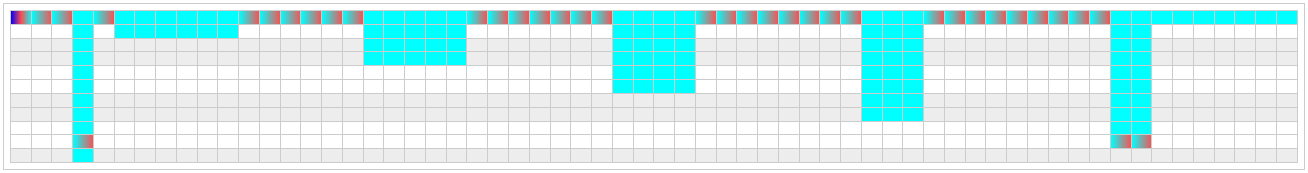}
\includegraphics[width=\textwidth]{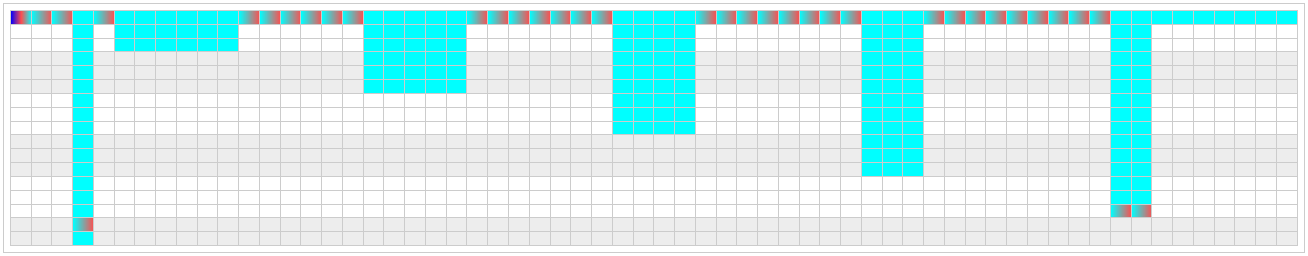}
\caption{Adding one row per row-block. \label{fig:row-shades}}
\end{figure} 

One can combine the addition of rows and of columns, as illustrated by Figure~\ref{fig:row-column-shades}. It represents $S^{(1,1)}(10,3)$.
\begin{figure}
\includegraphics[width=\textwidth]{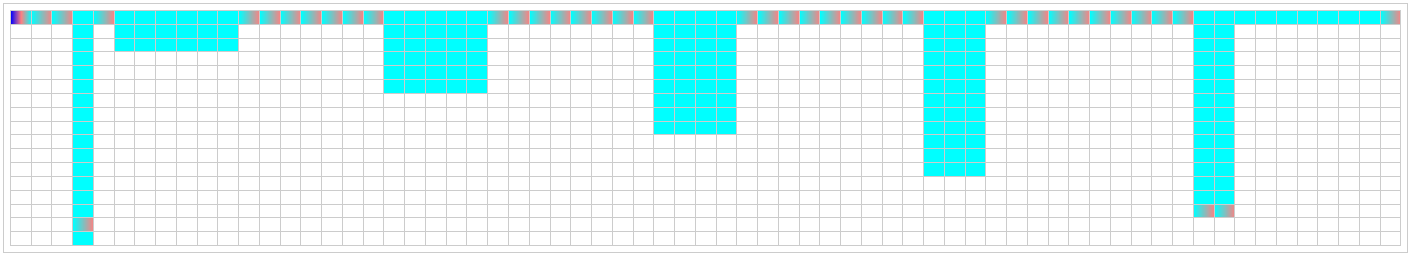}
\caption{Adding one row per row-block and one column per column-block. \label{fig:row-column-shades}}
\end{figure} 

The following remarks can be proven in a straightforward way. Having in mind the idea behind the construction may be of help.
\begin{remark}\label{rem:conductor-Sij+pr}
 $c^{(i,j+1)} = c^{(i,j)} + \frac{p^2}{2}$.
\end{remark}
\begin{remark}\label{rem:conductor-Si+jpr}
 $c^{(i+1,j)} = c^{(i,j)} + \left(\frac{p}{2}+1\right)m^{(i,j)}=c^{(i,j)} + \left(\frac{p}{2}+1\right)\left(m+ j\frac{p}{2}\right)$.
\end{remark}

Combining appropriately the previous remarks we get the following proposition.
\begin{proposition}\label{prop:conductor-Sijpr}
  The conductor of $S^{(i,j)}(p,\tau)$ is:
  \begin{align*}
  \conductoroper(S^{(i,j)}(p,\tau))
  &=\conductoroper(S(p,\tau))+j\frac{p^2}{2} + i\left(\frac{p}{2}+1\right)\left(m+ j\frac{p}{2}\right).
\end{align*}
\end{proposition}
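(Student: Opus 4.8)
The plan is to prove Proposition~\ref{prop:conductor-Sijpr} by iterating the two one-step recursions in Remarks~\ref{rem:conductor-Sij+pr} and~\ref{rem:conductor-Si+jpr}, starting from $c^{(0,0)} = c = \conductoroper(S(p,\tau))$ (Remark~\ref{rem:S00pr--Spr} together with Proposition~\ref{prop:conductor_Sptau}). First I would move from $(0,0)$ to $(i,0)$ by applying Remark~\ref{rem:conductor-Si+jpr} repeatedly: since at stage $(k,0)$ we have $m^{(k,0)} = m$ (the second index being $0$), each step adds the constant $\left(\frac{p}{2}+1\right)m$, so $c^{(i,0)} = c + i\left(\frac{p}{2}+1\right)m$. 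Then I would move from $(i,0)$ to $(i,j)$ by applying Remark~\ref{rem:conductor-Sij+pr} $j$ times, which adds $j\frac{p^2}{2}$, giving $c^{(i,j)} = c + i\left(\frac{p}{2}+1\right)m + j\frac{p^2}{2}$.

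The one subtlety is that the claimed formula contains the cross term $i\left(\frac{p}{2}+1\right)\left(m + j\frac{p}{2}\right)$ rather than $i\left(\frac{p}{2}+1\right)m$, so the naive path through $(i,0)$ then $(i,j)$ does not obviously land on the stated expression — one must instead traverse in the other order, or simply verify algebraically that the two agree. Traversing first in $j$ (from $(0,0)$ to $(0,j)$, adding $j\frac{p^2}{2}$) and then in $i$ (from $(0,j)$ to $(i,j)$, where now $m^{(k,j)} = m + j\frac{p}{2}$ is constant in $k$, so each step adds $\left(\frac{p}{2}+1\right)\left(m + j\frac{p}{2}\right)$) produces exactly the stated formula. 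Since the recursions are consistent (the underlying object $c^{(i,j)}$ is well-defined directly from its definition in Section~\ref{sec:def-Sijptau}), either path gives the same answer; I would present the second path as the clean one. Alternatively, one can just expand the definition $c^{(i,j)} = c + j\frac{p^2}{2} + i\left(\frac{p}{2}+1\right)m^{(i,j)}$ and substitute $m^{(i,j)} = m + j\frac{p}{2}$ directly, which makes the proof essentially a one-line unwinding of definitions.

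I do not anticipate a genuine obstacle here: the whole content is bookkeeping, and the only thing to watch is index order in the telescoping, as noted above. Here is the proof.

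\begin{proof}
By Remark~\ref{rem:S00pr--Spr} and Proposition~\ref{prop:conductor_Sptau}, $c^{(0,0)} = c = \conductoroper(S(p,\tau))$. Applying Remark~\ref{rem:conductor-Sij+pr} $j$ times, starting from $(0,0)$, we obtain
\[
c^{(0,j)} = c + j\frac{p^2}{2}.
\]
Now we pass from $(0,j)$ to $(i,j)$ using Remark~\ref{rem:conductor-Si+jpr}. At each stage $(k,j)$ with $0\le k\le i-1$ we have $m^{(k,j)} = m + j\frac{p}{2}$, which does not depend on $k$, so each of the $i$ steps adds the constant quantity $\left(\frac{p}{2}+1\right)\left(m + j\frac{p}{2}\right)$. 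Hence
\[
c^{(i,j)} = c^{(0,j)} + i\left(\frac{p}{2}+1\right)\left(m + j\frac{p}{2}\right) = \conductoroper(S(p,\tau)) + j\frac{p^2}{2} + i\left(\frac{p}{2}+1\right)\left(m + j\frac{p}{2}\right),
\]
as claimed.
\end{proof}
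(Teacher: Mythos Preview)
Your proof is correct and follows exactly the approach the paper intends: the paper's entire justification is the sentence ``Combining appropriately the previous remarks we get the following proposition,'' and you have carried out precisely that combination, iterating Remark~\ref{rem:conductor-Sij+pr} along the $j$-direction and then Remark~\ref{rem:conductor-Si+jpr} along the $i$-direction (correctly noting that $m^{(k,j)}=m+j\frac{p}{2}$ is independent of $k$). Your alternative observation---that one can simply substitute $m^{(i,j)}=m+j\frac{p}{2}$ into the defining expression $c^{(i,j)}=c+j\frac{p^2}{2}+i(\frac{p}{2}+1)m^{(i,j)}$---is in fact the most direct route and is equally in the spirit of the paper.
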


\subsection{A visual way to look at $S^{(i,j)}(p,\tau)$}\label{subsec:visual-Spirtau}

The shape of $S^{(i,j)}(p,\tau)$ behaves as follows: when $i$ increases (the number of rows increases), the figure gets higher;  when $j$ increases (the number of columns increases), the figure gets wider.
This fact is illustrated by Figure~\ref{fig:S50100ANDS05100}, which represents the semigroups obtained from the same semigroup after adding rows and columns and drawn at the same scale.

\begin{figure}
\includegraphics[scale=1]{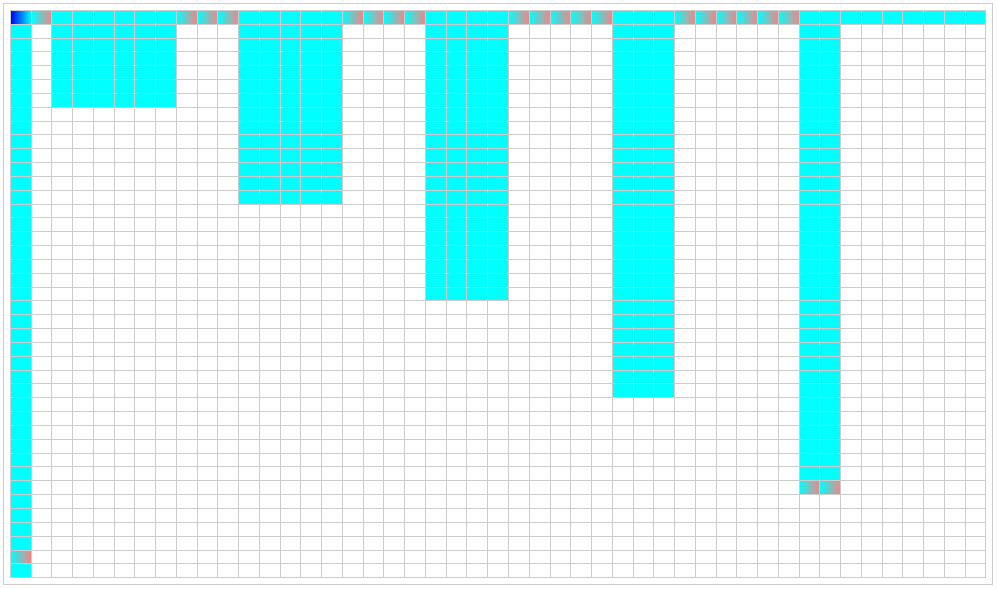}
\includegraphics[scale=1]{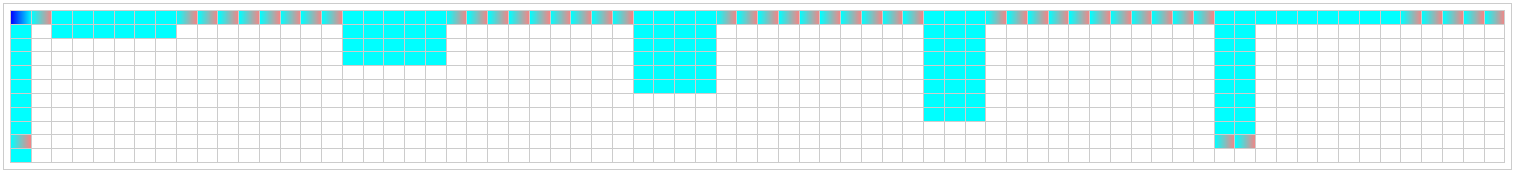}
\caption{Shape of the semigroups $S^{(5,0)}(10,0)$ and $S^{(0,5)}(10,0)$\label{fig:S50100ANDS05100}}
\end{figure} 
In a different scale, Figure~\ref{fig:S55100} illustrates the shape of the semigroup $S^{(5,5)}(10,0)$ obtained by adding the same number rows and columns to $S(10,0)$.
\begin{figure}
\includegraphics[width=\textwidth]{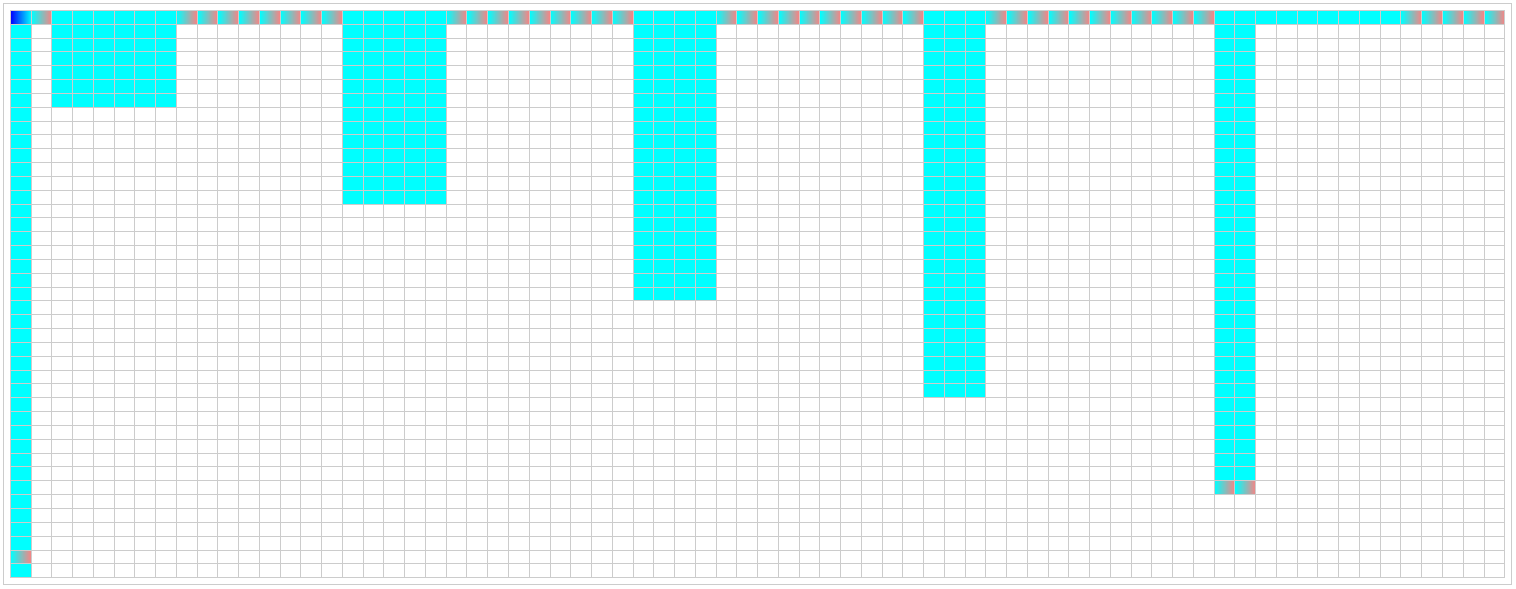}
\caption{Shape of the semigroup $S^{(5,5)}(10,0)$\label{fig:S55100}}
\end{figure} 

\subsection{Tables with numbers related to $S^{(i,j)}(p,\tau)$}\label{subsec:table-Sijptau}
Tables~\ref{table:examples-S05ptau},~\ref{table:examples-S50ptau} and~\ref{table:examples-S55ptau}, collect information related to $S^{(i,j)}(p,\tau)$ and can be compared to those tables of Sections~\ref{subsec:table-Sp} and~\ref{subsec:table-Sptau}.

\begin{table}
$\begin{array}{|l|c|c|c|c|}
\hline
\text{Semigroup}&\eliahouoper&\wilfoper&\langle \text{ left generators }\rangle_c&\text{genus}\\ \hline\hline
S^{(0,5)}(2,3)&3&21&\langle 15,17,18\rangle_{27}&23
\\\hline
S^{(0,5)}(4,3)&2&182&\langle 30,46,47\rangle_{117}&104
\\\hline
S^{(0,5)}(6,3)&0&736&\langle 47,79,80\rangle_{279}&250
\\\hline
S^{(0,5)}(8,3)&-3&2067&\langle 66,116,117\rangle_{525}&471
\\\hline
S^{(0,5)}(10,3)&-7&4713&\langle 87,157,158\rangle_{867}&777
\\\hline
S^{(0,5)}(12,3)&-12&9386&\langle 110,202,203\rangle_{1317}&1178
\\\hline
S^{(0,5)}(14,3)&-18&16992&\langle 135,251,252\rangle_{1887}&1684
\\\hline
S^{(0,5)}(16,3)&-25&28651&\langle 162,304,305\rangle_{2589}&2305
\\\hline
S^{(0,5)}(18,3)&-33&45717&\langle 191,361,362\rangle_{3435}&3051
\\\hline
S^{(0,5)}(20,3)&-42&69798&\langle 222,422,423\rangle_{4437}&3932
\\\hline
\end{array}$
\caption{Numbers related to $S^{(0,5)}(p,3)$}
\label{table:examples-S05ptau}
\end{table}

\begin{table}
$\begin{array}{|l|c|c|c|c|}
\hline
\text{Semigroup}&\eliahouoper&\wilfoper&\langle \text{ left generators }\rangle_c&\text{genus}\\ \hline\hline
S^{(5,0)}(2,3)&3&51&\langle 10,62,63\rangle_{117}&93
\\\hline
S^{(5,0)}(4,3)&2&442&\langle 20,131,132\rangle_{377}&314
\\\hline
S^{(5,0)}(6,3)&0&1751&\langle 32,214,215\rangle_{829}&700
\\\hline
S^{(5,0)}(8,3)&-3&4897&\langle 46,311,312\rangle_{1515}&1286
\\\hline
S^{(5,0)}(10,3)&-7&11213&\langle 62,422,423\rangle_{2477}&2107
\\\hline
S^{(5,0)}(12,3)&-12&22516&\langle 80,547,548\rangle_{3757}&3198
\\\hline
S^{(5,0)}(14,3)&-18&41177&\langle 100,686,687\rangle_{5397}&4594
\\\hline
S^{(5,0)}(16,3)&-25&70191&\langle 122,839,840\rangle_{7439}&6330
\\\hline
S^{(5,0)}(18,3)&-33&113247&\langle 146,1006,1007\rangle_{9925}&8441
\\\hline
S^{(5,0)}(20,3)&-42&174798&\langle 172,1187,1188\rangle_{12897}&10962
\\\hline
\end{array}$
\caption{Numbers related to $S^{(0,5)}(p,3)$}
\label{table:examples-S50ptau}
\end{table}

\begin{table}
$\begin{array}{|l|c|c|c|c|}
\hline
\text{Semigroup}&\eliahouoper&\wilfoper&\langle \text{ left generators }\rangle_c&\text{genus}\\ \hline\hline
S^{(5,5)}(2,3)&3&111&\langle 15,92,93\rangle_{177}&153
\\\hline
S^{(5,5)}(4,3)&2&882&\langle 30,196,197\rangle_{567}&504
\\\hline
S^{(5,5)}(6,3)&0&3296&\langle 47,314,315\rangle_{1219}&1090
\\\hline
S^{(5,5)}(8,3)&-3&8817&\langle 66,446,447\rangle_{2175}&1946
\\\hline
S^{(5,5)}(10,3)&-7&19463&\langle 87,592,593\rangle_{3477}&3107
\\\hline
S^{(5,5)}(12,3)&-12&37876&\langle 110,752,753\rangle_{5167}&4608
\\\hline
S^{(5,5)}(14,3)&-18&67392&\langle 135,926,927\rangle_{7287}&6484
\\\hline
S^{(5,5)}(16,3)&-25&112111&\langle 162,1114,1115\rangle_{9879}&8770
\\\hline
S^{(5,5)}(18,3)&-33&176967&\langle 191,1316,1317\rangle_{12985}&11501
\\\hline
S^{(5,5)}(20,3)&-42&267798&\langle 222,1532,1533\rangle_{16647}&14712
\\\hline
\end{array}$
\caption{Numbers related to $S^{(5,5)}(p,3)$}
\label{table:examples-S55ptau}
\end{table}

\subsection{Eliahou and Wilf numbers for $S^{(i,j)}(p,\tau)$}\label{subsec:Eliahou-Wilf-Sijptau}

Let us collect some facts that will be used to compute Eliahou and Wilf numbers.

Some of the numbers involved are invariant under row changes (changing $i$) and some others are invariant under column changes (changing $j$). We register it in the following easy or straightforward remarks.

The number of primitives is invariant under column changes:
\begin{remark}\label{rem:invariants-row-changes-S0jptau}
 Let $i$ and $j$ be non-negative integers. Then
\begin{itemize} 
\item $\lvert \primitivesoper(S^{(i,j)}(p,\tau))\rvert= \lvert \primitivesoper(S^{(0,j)}(p,\tau))\rvert$
\end{itemize}
\end{remark}
The $\qoper$-number, the number of left elements and the number of decomposables at level $\qoper$,
are invariant under column changes:
\begin{remark}\label{rem:invariants-column-changes-Si0ptau}
 Let $i$ and $j$ be non-negative integers. Then
\begin{itemize} 
\item $\qoper(S^{(i,j)}(p,\tau)) =\qoper(S^{(i,0)}(p,\tau))$;
\item $\lvert\leftsoper(S^{(i,j)}(p,\tau))\rvert= \lvert\leftsoper(S^{(i,0)}(p,\tau))\rvert$;
\item $\lvert \Dqoper(S^{(i,j)}(p,\tau))\rvert= \lvert \Dqoper(S^{(i,0)}(p,\tau))\rvert$.
\end{itemize}
\end{remark}

\begin{lemma}\label{lemma:q-number-Sijpr}
  Let $i$ and $j$ be non-negative integers. Then
$$\qoper(S^{(i,j)}(p,\tau)) =\qoper(S(p,\tau))+i\left(\frac{p}{2}+1\right).$$
\end{lemma}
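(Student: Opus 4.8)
\textbf{Proof plan for Lemma~\ref{lemma:q-number-Sijpr}.} The plan is to reduce the statement to an explicit computation using the definitions of $m^{(i,j)}$ and $c^{(i,j)}$ together with Proposition~\ref{prop:conductor-Sijpr}, which already identifies the conductor of $S^{(i,j)}(p,\tau)$. Recall that $\qoper$ of a numerical semigroup is $\lceil \conductoroper/\multiplicityoper\rceil$, and that the multiplicity of $S^{(i,j)}(p,\tau)$ is $m^{(i,j)} = m + j\frac{p}{2}$ (one must first check that $m^{(i,j)}$ is indeed the least positive element, i.e.\ that it is smaller than $g^{(i,j)}$ and than $c^{(i,j)}$; this is immediate from the formulas since $g^{(i,j)} \approx 2m^{(i,j)}$). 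So the task is to show
$$\left\lceil \frac{\conductoroper(S^{(i,j)}(p,\tau))}{m^{(i,j)}}\right\rceil = p + i\left(\frac{p}{2}+1\right),$$
using that $\qoper(S(p,\tau)) = p$ by Corollary~\ref{cor:q-number-Spr}.

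First I would substitute the expression from Proposition~\ref{prop:conductor-Sijpr}, namely
$$\conductoroper(S^{(i,j)}(p,\tau)) = \conductoroper(S(p,\tau)) + j\frac{p^2}{2} + i\left(\frac{p}{2}+1\right)\left(m + j\frac{p}{2}\right),$$
and use $\conductoroper(S(p,\tau)) = pm - \tau$ (Proposition~\ref{prop:conductor_Sptau}). The key algebraic observation is that $\conductoroper(S(p,\tau)) + j\frac{p^2}{2} = pm - \tau + j\frac{p^2}{2} = p\left(m + j\frac{p}{2}\right) - \tau = p\,m^{(i,j)} - \tau$. Therefore
$$\conductoroper(S^{(i,j)}(p,\tau)) = p\,m^{(i,j)} - \tau + i\left(\frac{p}{2}+1\right)m^{(i,j)} = \left(p + i\left(\tfrac{p}{2}+1\right)\right)m^{(i,j)} - \tau.$$
Dividing by $m^{(i,j)}$ gives $\conductoroper/m^{(i,j)} = p + i\left(\frac{p}{2}+1\right) - \tau/m^{(i,j)}$, and since $0 \le \tau < m^{(i,j)}$ (because $m^{(i,j)} \ge m = \mu + \tau\frac{p}{2} \ge \mu + \tau > \tau$), the ceiling of this is exactly $p + i\left(\frac{p}{2}+1\right)$, as required.

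I do not anticipate a serious obstacle: the whole argument is a rearrangement showing that the contributions of $j$ are absorbed into the larger multiplicity while the contribution of $i$ sits cleanly in the quotient, and the ``$-\tau$'' correction is too small to affect the ceiling. The one point deserving a line of care is the verification that $m^{(i,j)}$ really is the multiplicity (so that $\qoper$ is computed against the right denominator) and that $\tau < m^{(i,j)}$ so the ceiling rounds up by exactly $1$ when $\tau > 0$ and is an equality when $\tau = 0$; both are immediate from the defining formulas. Alternatively, one can avoid invoking Proposition~\ref{prop:conductor-Sijpr} directly and instead combine Remarks~\ref{rem:conductor-Sij+pr} and~\ref{rem:conductor-Si+jpr} inductively, but the closed-form route above is shorter.
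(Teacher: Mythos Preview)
Your argument is correct. The algebra is sound: the key identity
\[
\conductoroper(S^{(i,j)}(p,\tau)) = \left(p + i\left(\tfrac{p}{2}+1\right)\right)m^{(i,j)} - \tau
\]
follows exactly as you say from Proposition~\ref{prop:conductor-Sijpr} and Proposition~\ref{prop:conductor_Sptau}, and the bound $0\le \tau < m^{(i,j)}$ (which you justify via $m^{(i,j)}\ge m=\mu+\tau\tfrac{p}{2}\ge \mu+\tau>\tau$, using $p\ge 2$ and $\mu>0$) pins down the ceiling. Your side check that $m^{(i,j)}$ is the multiplicity is also fine.

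Your route, however, is not the paper's. The paper argues geometrically and inductively: it first invokes Remark~\ref{rem:invariants-column-changes-Si0ptau} to reduce to $j=0$, and then observes from the pictorial description that passing from $S^{(k,0)}(p,\tau)$ to $S^{(k+1,0)}(p,\tau)$ inserts one row into each of the $\tfrac{p}{2}+1$ row-blocks, so the $\qoper$-number increases by $\tfrac{p}{2}+1$ at each step. Your approach bypasses both the column-invariance remark and the row-block picture, computing the ceiling directly from the closed formulas. This is cleaner and more self-contained, and in fact your displayed identity is precisely the computation the paper carries out one lemma later when proving $\rho(S^{(i,j)}(p,\tau))=\tau$ (Lemma~\ref{lemma:rho-Sijpr}); so your argument effectively establishes both lemmas at once. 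The paper's version, by contrast, keeps the reasoning tied to the visual construction that motivates the whole family.
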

\begin{proof}
By Remark~\ref{rem:invariants-column-changes-Si0ptau}, $\qoper(S^{(i,j)}(p,\tau)) =\qoper(S^{(i,0)}(p,\tau))$. Furthermore, $S^{(k+1,0)}(p,\tau)$ is the numerical semigroup whose shape is the one obtained from that of $S^{(k,0)}(p,\tau)$ by adding one row to each row-block. The number of rows added is $\frac{p}{2}+1$. The result follows by induction.
\end{proof}

\begin{lemma}\label{lemma:rho-Sijpr}
Let $i$ and $j$ be non-negative integers. Then
$$\rho(S^{(i,j)}(p,\tau)) =\tau.$$
\end{lemma}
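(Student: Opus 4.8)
The plan is to mimic the structure of the proof of Lemma~\ref{lemma:q-number-Sijpr}. First I would invoke Corollary~\ref{cor:rho-Sptau}, which gives $\rho(S(p,\tau))=\tau$, establishing the base case $(i,j)=(0,0)$ via Remark~\ref{rem:S00pr--Spr}. The claim is then that the quantity $\rho = \qoper\cdot\multiplicityoper - \conductoroper$ is unchanged under both the row-additions (changing $i$) and the column-additions (changing $j$) that build $S^{(i,j)}(p,\tau)$ from $S(p,\tau)$.

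For the column step, I would compute $\rho(S^{(i,j+1)}(p,\tau)) - \rho(S^{(i,j)}(p,\tau))$ directly. By Remark~\ref{rem:invariants-column-changes-Si0ptau} the $\qoper$-number is invariant under column changes, so $\Delta\rho = \qoper\cdot\Delta\multiplicityoper - \Delta\conductoroper$. From the definition $m^{(i,j)} = m + j\frac{p}{2}$ we get $\Delta\multiplicityoper = \frac{p}{2}$, and from Remark~\ref{rem:conductor-Sij+pr} we get $\Delta\conductoroper = \frac{p^2}{2}$. Since $\qoper(S^{(i,j)}(p,\tau)) = p + i(\frac{p}{2}+1)$ by Lemma~\ref{lemma:q-number-Sijpr}, we have $\qoper\cdot\Delta\multiplicityoper = \bigl(p + i(\tfrac{p}{2}+1)\bigr)\cdot\frac{p}{2} = \frac{p^2}{2} + i(\tfrac{p}{2}+1)\frac{p}{2}$. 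Hmm, that does not cancel with $\frac{p^2}{2}$ unless $i=0$; so in fact $\Delta\multiplicityoper$ under a column change is not $\frac{p}{2}$ but must account for the multiplicity of $S^{(i,j)}$ already being $m^{(i,j)}$, and the conductor increment in Remark~\ref{rem:conductor-Sij+pr} should likewise be read together with the correct $\qoper$. The cleaner route is: the multiplicity of $S^{(i,j)}(p,\tau)$ is $m^{(i,j)} = m + j\frac{p}{2}$ (independent of $i$, as the added rows lie above level $0$ and do not disturb the smallest nonzero element), so the multiplicity increment per column-block addition, summed over all blocks, is exactly $\frac{p}{2}$ per unit of $j$; meanwhile the conductor increment per unit of $j$ is $\frac{p^2}{2}$ and $\qoper$ stays at its current value. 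The identity $\qoper\cdot\frac{p}{2} = \frac{p^2}{2}$ only when $\qoper=p$, i.e. when $i=0$. So the correct bookkeeping is that each added column contributes $p$ new gaps at each of the $\qoper$ levels below the conductor — wait, the right statement (as noted in the text preceding Remark~\ref{rem:number_decomposables}) is that each added column adds exactly $\qoper(S^{(i,j)})$ gaps, which is precisely why $\conductoroper$ rises by $\qoper\cdot(\text{number of column-blocks})=\qoper\cdot\frac{p}{2}$ per unit $j$ while $\multiplicityoper$ rises by $\frac{p}{2}$ per unit $j$, giving $\Delta\conductoroper = \qoper\cdot\Delta\multiplicityoper$ and hence $\Delta\rho=0$.

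For the row step, I would similarly compute $\rho(S^{(i+1,j)}(p,\tau)) - \rho(S^{(i,j)}(p,\tau))$. Adding one row per row-block raises $\qoper$ by $\frac{p}{2}+1$ (Lemma~\ref{lemma:q-number-Sijpr}), leaves the multiplicity $m^{(i,j)}$ unchanged, and by Remark~\ref{rem:conductor-Si+jpr} raises the conductor by $(\frac{p}{2}+1)\,m^{(i,j)}$. Hence $\Delta\rho = \Delta\qoper\cdot\multiplicityoper - \Delta\conductoroper = (\frac{p}{2}+1)m^{(i,j)} - (\frac{p}{2}+1)m^{(i,j)} = 0$. Combining the two steps by a double induction on $i$ and $j$, starting from $\rho(S^{(0,0)}(p,\tau))=\rho(S(p,\tau))=\tau$, yields $\rho(S^{(i,j)}(p,\tau))=\tau$ for all non-negative $i,j$.

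The main obstacle I anticipate is not any deep argument but rather pinning down the exact conductor of $S^{(i,j)}(p,\tau)$ rigorously enough to justify Remarks~\ref{rem:conductor-Sij+pr} and~\ref{rem:conductor-Si+jpr} — that is, verifying that the stated $c^{(i,j)}$ really is the conductor, which requires checking that $c^{(i,j)}-1$ is a gap and that the construction does not accidentally fill in integers just below $c^{(i,j)}$. Since Proposition~\ref{prop:conductor-Sijpr} already records the conductor formula (presumably proved by mimicking Corollary~\ref{cor:conductor-Sp}), I would simply cite it, and then the lemma reduces to the two one-line difference computations above plus the trivial base case. Thus the proof is essentially: cite Proposition~\ref{prop:conductor-Sijpr} and Lemma~\ref{lemma:q-number-Sijpr} for $\conductoroper$ and $\qoper$, note $\multiplicityoper(S^{(i,j)}(p,\tau))=m^{(i,j)}=m+j\frac{p}{2}$, and verify directly that $\qoper(S^{(i,j)}(p,\tau))\cdot m^{(i,j)} - \conductoroper(S^{(i,j)}(p,\tau)) = p\cdot m - \conductoroper(S(p,\tau)) = \rho(S(p,\tau)) = \tau$.
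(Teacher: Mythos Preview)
Your final sentence is exactly the paper's proof: substitute the closed formulas for $\qoper$, $m^{(i,j)}$, and $c^{(i,j)}$ into $\rho=\qoper\cdot m^{(i,j)}-c^{(i,j)}$ and watch the $i$- and $j$-terms cancel, leaving $pm-c=\tau$. The paper does precisely this in four lines, citing Lemma~\ref{lemma:q-number-Sijpr} for $\qoper$ and the definition of $c^{(i,j)}$.

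Everything before that last sentence is an unnecessary detour. Your double-induction idea is not wrong in principle, but the confusion you hit in the column step is real: Remark~\ref{rem:conductor-Sij+pr} as written gives $\Delta c=\tfrac{p^2}{2}$, yet the defining formula $c^{(i,j)}=c+j\tfrac{p^2}{2}+i(\tfrac{p}{2}+1)m^{(i,j)}$ has $m^{(i,j)}$ itself depending on $j$, so the true increment is $\tfrac{p^2}{2}+i(\tfrac{p}{2}+1)\tfrac{p}{2}=\qoper\cdot\tfrac{p}{2}$, which is exactly what you eventually argue. Once you have the closed formulas there is no reason to go through increments at all; drop the inductive scaffolding and keep only the direct three-line computation.
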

\begin{proof}We use the fact $\rhooper(S(p)) =\tau$ (Corollary~\ref{cor:q-number-Sp}).
\begin{align}
\rho(S^{(i,j)}(p,\tau)) &= \qoper(S^{(i,j)}(p,\tau))m^{(i,j)}-c^{(i,j)}\nonumber\\
&=\left(\qoper(S(p))+i\left(\frac{p}{2}+1\right)\right)m^{(i,j)} - \left( c + j\frac{p^2}{2} + i\left(\frac{p}{2}+1\right)m^{(i,j)} \right)\nonumber\\
&=pm^{(i,j)} - \left( c + j\frac{p^2}{2} \right) = p\left( m+ j\frac{p}{2} \right) - \left( c + j\frac{p^2}{2} \right)\nonumber\\
&=pm-c=\tau \nonumber\qedhere
\end{align}
\end{proof}

\begin{lemma}\label{lemma:LSijpr}
Let $i$ and $j$ be non-negative integers. Then
$$\lvert\leftsoper(S^{(i,j)}(p,\tau))\rvert=\lvert\leftsoper(S(p,\tau))\rvert+\frac{i}{48}(p^3+12p^2+44p+48).$$
\end{lemma}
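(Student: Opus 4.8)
\textbf{Proof strategy for Lemma~\ref{lemma:LSijpr}.}
The plan is to reduce everything to the case $j=0$ and then induct on $i$. By Remark~\ref{rem:invariants-column-changes-Si0ptau}, the number of left elements is invariant under column changes, so $\lvert\leftsoper(S^{(i,j)}(p,\tau))\rvert=\lvert\leftsoper(S^{(i,0)}(p,\tau))\rvert$; hence it suffices to prove
$$\lvert\leftsoper(S^{(i,0)}(p,\tau))\rvert=\lvert\leftsoper(S(p,\tau))\rvert+\frac{i}{48}(p^3+12p^2+44p+48).$$
First I would establish the base case $i=0$, which is Remark~\ref{rem:S00pr--Spr} together with the trivial identity $\frac{0}{48}(\cdots)=0$. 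For the inductive step I would use the geometric description of $S^{(k+1,0)}(p,\tau)$ already invoked in the proof of Lemma~\ref{lemma:q-number-Sijpr}: its picture is obtained from that of $S^{(k,0)}(p,\tau)$ by adding one row to the top of each row-block. So the entire argument comes down to counting how many new left elements appear when one such row is added to each row-block.

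The key computation is therefore to determine, for the passage $S^{(k,0)}(p,\tau)\to S^{(k+1,0)}(p,\tau)$, the number $\Delta$ of left elements gained, and to show $\Delta=\frac{1}{48}(p^3+12p^2+44p+48)$ independently of $k$ and $\tau$. Here I would rely on the block decomposition from Section~\ref{subsec:visual-Sp}: recall that for $S(p)$ the picture splits into a block $C$ (two leftmost columns) and blocks $B_i$ for $1\le i\le\frac p2$, each with $\frac p2+4$ columns, and that in each $B'_i$ (the block with its top row removed) exactly $\frac p2+2-i$ columns contain left elements while $2+i$ do not; similarly $C'$ has one column of left elements and one without. The point is that adding a row per row-block inserts, in the part of the row lying over a column that already carries left elements, a new left element, while over a column with no left elements it inserts a gap. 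Since passing to $S(p,\tau)$ and to $S^{(k,0)}(p,\tau)$ does not change which columns carry left elements (that is the content of Remark~\ref{rem:number_lefts-Sp-Sptau} and the column-invariance remark), the count $\Delta$ equals the total number of ``left columns'' across all row-blocks, which is
$$1+\sum_{i=1}^{p/2}\left(\tfrac p2+2-i\right).$$
Summing this and simplifying should give exactly $\frac{1}{48}(p^3+12p^2+44p+48)$; I would double-check this against the tables, e.g. for $p=10$ one predicts an increment of $\frac{1}{48}(1000+1200+440+48)=\frac{2688}{48}=56$ left elements per unit increase of $i$.

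The main obstacle I anticipate is not the algebra but pinning down precisely which columns of the newly-added rows carry left elements, i.e.\ making rigorous the claim that ``adding a row to a row-block adds one left element exactly over each column that already had left elements below.'' This needs a careful look at the semigroup-membership argument: one must verify that for each such column the new entry at the higher level is still expressible using $m^{(i,0)}, g^{(i,0)}, g^{(i,0)}+1$ and still lies below the conductor $c^{(i+1,0)}$, and conversely that over an ``empty'' column the new entry is a gap. This is exactly the kind of reasoning carried out in detail for $S(p)$ in Lemmas~\ref{lemma:the-set-Cij} and~\ref{lemma:Lij-disjoint} and Corollary~\ref{cor:conductor-Sp}; the cleanest route is probably to note that the row-block structure makes $S^{(k+1,0)}(p,\tau)$ genuinely of the same combinatorial form as $S(p,\tau)$ but with $\qoper$ increased by $\frac p2+1$ per unit of $k$, and then to reuse the column-counting of Corollary~\ref{cor:number_left_elts} verbatim, tracking only the increment. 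Once the geometric claim is granted, the remainder is the routine summation above.
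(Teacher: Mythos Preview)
Your overall architecture is exactly the paper's: reduce to $j=0$ via Remark~\ref{rem:invariants-column-changes-Si0ptau}, then induct on $i$ by computing the increment $\Delta=\lvert\leftsoper(S^{(k+1,0)}(p,\tau))\rvert-\lvert\leftsoper(S^{(k,0)}(p,\tau))\rvert$. The problem is your computation of $\Delta$.

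You claim that each newly-added row picks up one left element over every column that already carried left elements, and hence
\[
\Delta \;=\; 1+\sum_{i=1}^{p/2}\Bigl(\tfrac p2+2-i\Bigr).
\]
For $p=10$ this gives $1+6+5+4+3+2=21$, contradicting your own check that $\Delta=56$. The issue is a conflation of column-blocks and row-blocks. The expression $1+\sum_{i=1}^{p/2}(\tfrac p2+2-i)$ counts the total number of columns containing left elements, i.e.\ the number of left elements at the \emph{topmost} level $p-1$. But the $\tfrac p2+1$ rows you insert sit at different heights, one atop each row-block, and the lower row-blocks have fewer occupied columns above them: a column $C_{i,j}$ only reaches down to level $2i-1$, so at the level where the $k$-th row-block ends, only the columns with small enough $i$ are present.

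The paper's count is that the $k$-th added row (counting upwards, $1\le k\le \tfrac p2+1$) contains exactly $1+2+\cdots+k=\tfrac{k(k+1)}{2}$ left elements, and then
\[
\Delta=\sum_{k=1}^{p/2+1}\frac{k(k+1)}{2}=\frac{1}{48}(p+2)(p+4)(p+6)=\frac{1}{48}(p^3+12p^2+44p+48),
\]
which for $p=10$ does give $1+3+6+10+15+21=56$. So the step you flagged as ``the main obstacle'' is indeed where the argument breaks: the claim that a new row gains a left element over \emph{every} previously-occupied column is false for all but the topmost row-block, and you need the level-by-level triangular count instead.
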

\begin{proof}
By Remark~\ref{rem:invariants-column-changes-Si0ptau}, we have $\lvert\leftsoper(S^{(i,j)}(p,\tau))\rvert =\lvert\leftsoper(S^{(i,0)}(p,\tau))\rvert$. Let us now find a formula for $\lvert \leftsoper(S^{(i,0)}(p,\tau))\rvert$.

 Let $1\le k\le \frac{p}{2}+1$. The $k$-th row added, counting upwards, contains $1+\cdots+k=\sum_{k=1}^{\frac{p}{2}+1}\frac{k(k+1)}{2}$ left elements. Summing up we get:
  \begin{align}
   \lvert \leftsoper(S^{(i+1,0)}_r(p,\tau))\rvert -\lvert\leftsoper(S^{(i,0)}_r(p,\tau))\rvert&=\sum_{k=1}^{\frac{p}{2}+1}\frac{k(k+1)}{2}\nonumber \\
    &=\frac{1}{2}\left(\frac{(p/2+1)(p/2+2)}{2}+\frac{(p/2+1)(p/2+2)(p+3)}{6}\right)\nonumber \\
    &=\frac{1}{16}(p+2)(p+4)(p/3+2)\nonumber \\
    &=\frac{1}{48}(p+2)(p+4)(p+6)\nonumber \\
   &=\frac{1}{48}(p^3+12p^2+44p+48)\nonumber
 \end{align}
By induction, we get $\lvert\leftsoper(S^{(i,0)}(p,\tau))\rvert=\lvert\leftsoper(S(p,\tau))\rvert+\frac{i}{48}(p^3+12p^2+44p+48).$
\end{proof}
\begin{corollary}\label{cor:LSijpr}
Let $i$ and $j$ be non-negative integers. Then
$$\lvert\leftsoper(S^{(i,j)}(p,\tau))\rvert=\frac{i+2}{48}p^3+\frac{i+ 18}{48}p^2 + \frac{44i+52}{48}p+i.$$
\end{corollary}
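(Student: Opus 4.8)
This is a routine consequence of the results just proved, so the proof will be a short computation; the plan is simply to chain together Lemma~\ref{lemma:LSijpr}, Remark~\ref{rem:number_lefts-Sp-Sptau} and Corollary~\ref{cor:number_left_elts}. Concretely, I would start from the identity
$$\lvert\leftsoper(S^{(i,j)}(p,\tau))\rvert=\lvert\leftsoper(S(p,\tau))\rvert+\frac{i}{48}\bigl(p^3+12p^2+44p+48\bigr)$$
of Lemma~\ref{lemma:LSijpr} (observe already that the right-hand side is independent of $j$, and of $\tau$ beyond what enters $\lvert\leftsoper(S(p,\tau))\rvert$), then use Remark~\ref{rem:number_lefts-Sp-Sptau} to replace $\lvert\leftsoper(S(p,\tau))\rvert$ by $\lvert\leftsoper(S(p))\rvert$, and finally substitute the closed form $\lvert\leftsoper(S(p))\rvert=\frac{p^3}{24}+\frac{3}{8}p^2+\frac{13}{12}p$ from Corollary~\ref{cor:number_left_elts}.

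The remaining work is to add these two polynomials in $p$: bringing everything to the common denominator $48$ and collecting powers of $p$ yields the cubic, quadratic and linear coefficients displayed in the statement, while the constant term $\frac{48i}{48}=i$ accounts for the last summand. I expect no genuine obstacle here --- the substantive combinatorics (enumerating the left elements of $S(p)$, and counting the left elements contributed by each of the $\frac{p}{2}+1$ rows added to a row-block when passing from $S^{(i,0)}(p,\tau)$ to $S^{(i+1,0)}(p,\tau)$) has already been carried out in Corollary~\ref{cor:number_left_elts} and in the inductive proof of Lemma~\ref{lemma:LSijpr}. The one point requiring a little care is simply bookkeeping the fractions so that the coefficients come out in the stated form.
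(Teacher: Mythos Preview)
Your proposal is correct and follows exactly the paper's own approach: invoke Lemma~\ref{lemma:LSijpr}, replace $\lvert\leftsoper(S(p,\tau))\rvert$ by the closed form $\frac{p^3}{24}+\frac{3}{8}p^2+\frac{13}{12}p$ via Remark~\ref{rem:number_lefts-Sp-Sptau} and Corollary~\ref{cor:number_left_elts}, and collect terms over the common denominator~$48$. There is nothing to add.
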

\begin{proof}
By Remark~\ref{rem:number_lefts-Sp-Sptau} and Corollary~\ref{cor:number_left_elts}, $\lvert\leftsoper(S(p,\tau))\rvert= \frac{p^3}{24}+\frac{3}{8}p^2 + \frac{13}{12}p$. It follows that 
\begin{align}
\lvert\leftsoper(S^{(i,j)}(p,\tau))\rvert&=\frac{p^3}{24}+\frac{3}{8}p^2 + \frac{13}{12}p+\frac{i}{48}(p^3+12p^2+44p+48)\nonumber \\
&=\frac{i+2}{48}p^3+\frac{i+ 18}{48}p^2 + \frac{44i+52}{48}p+i\nonumber\qedhere
\end{align}
\end{proof}

\begin{lemma}\label{lemma:PSijpr}
Let $i$ and $j$ be non-negative integers. Then
$$\lvert\primitivesoper(S^{(i,j)}(p,\tau))\rvert=\lvert\primitivesoper(S(p,\tau))\rvert+j\frac{p}{2}.$$
\end{lemma}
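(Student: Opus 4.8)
The plan is to reduce, as in the preceding lemmas, the computation to the two "pure" directions (varying only $i$, or only $j$) by using the invariance remarks, and then to count how many primitives each added row or column contributes. First I would invoke Remark~\ref{rem:invariants-row-changes-S0jptau}, which already tells us that $\lvert\primitivesoper(S^{(i,j)}(p,\tau))\rvert=\lvert\primitivesoper(S^{(0,j)}(p,\tau))\rvert$; that is, adding rows (increasing $i$) does not change the number of primitive elements. Indeed, passing from $S^{(k,j)}(p,\tau)$ to $S^{(k+1,j)}(p,\tau)$ enlarges the semigroup by adding one row to the top of each row-block, and each such added row consists entirely of decomposable elements (every element of a new top row is obtained from an element of the old top row by adding the multiplicity, hence is not primitive), while the generators $m^{(i,j)}, g^{(i,j)}, g^{(i,j)}+1$ change in a way that keeps exactly three left primitives, by (the obvious analogue of) Corollary~\ref{cor:left-primitives-Sptau}. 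So it suffices to treat the case $i=0$ and show $\lvert\primitivesoper(S^{(0,j)}(p,\tau))\rvert=\lvert\primitivesoper(S(p,\tau))\rvert+j\frac{p}{2}$.

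For the $j$-direction I would argue exactly as for Remark~\ref{rem:number_primitives}, where the analogous statement was proved when passing from $S(p)$ to $S(p,\tau)=S^{(0,0)}(p,\tau)$: the shape of $S^{(0,j+1)}(p,\tau)$ is obtained from that of $S^{(0,j)}(p,\tau)$ by inserting one extra column into each of the $\frac{p}{2}$ column-blocks (this is precisely the construction recalled around Figure~\ref{fig:column-shades}), each inserted column lying strictly below the conductor level and carrying exactly one new primitive element (the element at its base, which is the unique element of that column not expressible via the multiplicity), while the left primitives again remain the three generators. Hence each increment of $j$ by $1$ adds exactly $\frac{p}{2}$ primitives, and by induction on $j$ we get $\lvert\primitivesoper(S^{(0,j)}(p,\tau))\rvert=\lvert\primitivesoper(S(p,\tau))\rvert+j\frac{p}{2}$. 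Combining with the $i$-invariance from Remark~\ref{rem:invariants-row-changes-S0jptau} yields the claimed formula.

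The only genuinely non-routine point is justifying that a freshly inserted column contributes exactly one primitive and that freshly inserted rows contribute none — i.e. controlling which of the new elements are minimal generators. This is the same kind of bookkeeping underlying Corollary~\ref{cor:embedding_dimension} and Remark~\ref{rem:number_primitives}, and it can be made rigorous by the block decomposition: a new column sits in the interior of a block $B'_i$ among columns with no left elements, so its lowest occupied entry (at level $\qoper$ or just below) is reached only as (that entry itself) and not as (smaller element)$+m^{(0,j)}$, whereas every entry of a newly added top row equals (the entry directly below it)$+m^{(i,j)}$ and so is decomposable. I would state this as a short paragraph of "straightforward" verification, in keeping with the style used for the earlier analogous remarks, rather than redoing the full congruence analysis of Lemma~\ref{lemma:Lij-disjoint}.
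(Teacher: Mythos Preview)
Your approach is correct and is organized differently from the paper's very terse proof. The paper argues in one stroke that the projection of $\leftsoper(S^{(i,j)}(p,\tau))$ into $[c^{(i,j)},c^{(i,j)}+m^{(i,j)}[$ has the same cardinality for every pair $(i,j)$, and then reads off the result from $m^{(i,j)}=m+j\frac{p}{2}$; you instead split into the $i$- and $j$-directions, quoting Remark~\ref{rem:invariants-row-changes-S0jptau} for $i$-invariance and then inducting on $j$ via the column-block description, exactly parallel to Remark~\ref{rem:number_primitives}. Your route makes the mechanism more explicit; the paper's is quicker. One phrasing issue to correct: the new primitive contributed by an inserted column sits at level~$\qoper$ (the \emph{top} of the column), not ``at its base'', and the column does not lie ``strictly below the conductor level''---below level~$\qoper$ it consists entirely of gaps, and its unique occupied entry is precisely the new primitive at level~$\qoper$. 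Your last paragraph has this right (``lowest occupied entry at level~$\qoper$''), so this is an inconsistency in wording rather than a logical gap.
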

\begin{proof}
Just note that the projection of the elements of $\leftsoper(S^{(i,j)}(p,\tau))$ into $[c^{(i,j)},c^{(i,j)}+m^{(i,j)}[$ has the same number of elements of the projection of  $\leftsoper(S^{(r,s)}(p,\tau))$ into $[c^{(r,s)},c^{(r,s)}+m^{(r,s)}[$. The result follows from the fact that  $m^{(i,j)} = m+j\frac{p}{2}$.
\end{proof}

The number of decomposable elements at level $\qoper$ remains unchanged.
\begin{lemma}\label{lemma:DqSijpr}
Let $i$ and $j$ be non-negative integers. Then
$$\lvert \Dqoper (S^{(i,j)}(p,\tau))\rvert =\lvert \Dqoper (S(p,\tau))\rvert.$$
\end{lemma}
\begin{proof}
Let $i$ and $j$ be non-negative integers. Then
\begin{align*}
\lvert \Dqoper (S^{(i,j)}(p,\tau))\rvert &= m^{(i,j)}- \left(\lvert\primitivesoper(S(p,\tau))\rvert+j\frac{p}{2}\right) -3\\
&=m+j\frac{p}{2}- \left(\lvert \primitivesoper(S(p,\tau))\rvert+j\frac{p}{2}\right) -3=\lvert \Dqoper (S(p,\tau))\rvert.\qedhere
\end{align*}
\end{proof}

Observe that $S^{(i,j)}(p,\tau)=S^{(r,s)}(p,\tau)$ if and only if $i=r$ and $j=s$. In fact, if $i\ne r$ then the multiplicities of $S^{(i,j)}(p,\tau)$ and $S^{(r,s)}(p,\tau)$ are different; if $j\ne s$ then the conductors of $S^{(i,j)}(p,\tau)$ and $S^{(r,s)}(p,\tau)$ are different.
In particular, we can make the following remark which applies to some nice subfamilies of $\left\{S^{(i,j)}(p,\tau)\mid i,j\in \mathbb N\right\}$.
\begin{remark}\label{rem:infinite_sequence}
For fixed $p$ and $\tau$, there are infinitely many semigroups of the form $S^{(i,0)}(p,\tau)$. The same happens for the semigroups of the form $S^{(0,j)}(p,\tau)$ and $S^{(j,j)}(p,\tau)$. 
\end{remark}

\begin{theorem}\label{th:same_w0-Sijptau}
Let $p$ be an even positive integer and let $\tau$ be a non-negative integer. For non-negative integers $i,j$, the following holds: 
$$\eliahouoper(S^{(i,j)}(p,\tau))=\eliahouoper(S(p,\tau)).$$
\end{theorem}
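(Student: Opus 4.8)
The plan is to compute $\eliahouoper(S^{(i,j)}(p,\tau))$ directly from its definition, $\eliahouoper = \lvert P\cap L\rvert\,\lvert\leftsoper\rvert - \qoper\,\lvert\Dqoper\rvert + \rho$, by substituting the four ingredients — each of which has already been pinned down in a preceding lemma — and checking that the $i$- and $j$-dependent terms cancel. So the proof is essentially bookkeeping, but organized around two independent cancellations.

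First I would assemble the pieces. Since $m^{(i,j)}$, $g^{(i,j)}$, $g^{(i,j)}+1$ are distinct, the number of left primitives is $3$, exactly as for $S(p,\tau)$ (Corollary~\ref{cor:left-primitives-Sptau}); so $\lvert P\cap L\rvert = 3$ independently of $i,j$. By Lemma~\ref{lemma:rho-Sijpr}, $\rho(S^{(i,j)}(p,\tau)) = \tau$, again matching $S(p,\tau)$ (Corollary~\ref{cor:rho-Sptau}). By Lemma~\ref{lemma:DqSijpr}, $\lvert\Dqoper(S^{(i,j)}(p,\tau))\rvert = \lvert\Dqoper(S(p,\tau))\rvert$, call it $D$. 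What genuinely changes are $\lvert\leftsoper\rvert$ (via Lemma~\ref{lemma:LSijpr}, which adds $\frac{i}{48}(p^3+12p^2+44p+48)$ to $\lvert\leftsoper(S(p,\tau))\rvert$) and $\qoper$ (via Lemma~\ref{lemma:q-number-Sijpr}, which adds $i\left(\frac{p}{2}+1\right)$ to $\qoper(S(p,\tau)) = p$). Notice neither of these depends on $j$, so the $j$-invariance is immediate: every ingredient of the Eliahou number is $j$-independent, hence $\eliahouoper(S^{(i,j)}(p,\tau)) = \eliahouoper(S^{(i,0)}(p,\tau))$. (This can also be read off directly from Remarks~\ref{rem:invariants-row-changes-S0jptau} and~\ref{rem:invariants-column-changes-Si0ptau} together with Lemma~\ref{lemma:rho-Sijpr}.)

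It remains to handle the $i$-dependence, which is where the one substantive computation lives. Writing $L_0 = \lvert\leftsoper(S(p,\tau))\rvert$ and $\Delta = \frac{1}{48}(p^3+12p^2+44p+48)$, I would expand
\begin{align}
\eliahouoper(S^{(i,0)}(p,\tau)) &= 3\bigl(L_0 + i\Delta\bigr) - \Bigl(p + i\bigl(\tfrac{p}{2}+1\bigr)\Bigr)D + \tau\nonumber\\
&= \bigl(3L_0 - pD + \tau\bigr) + i\Bigl(3\Delta - \bigl(\tfrac{p}{2}+1\bigr)D\Bigr).\nonumber
\end{align}
The bracket $3L_0 - pD + \tau$ is exactly $\eliahouoper(S(p,\tau))$, so I just need the coefficient of $i$ to vanish, i.e.\ $3\Delta = \left(\frac{p}{2}+1\right)D$. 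Here $\Delta = \frac{1}{48}(p+2)(p+4)(p+6)$ and, by Remark~\ref{rem:number_decomposables} and Corollary~\ref{cor:number_decomposable_elts}, $D = \left(\frac{p}{2}+2\right)\left(\frac{p}{2}+3\right)/2 = \frac{1}{8}(p+4)(p+6)$. Then $\left(\frac{p}{2}+1\right)D = \frac{p+2}{2}\cdot\frac{(p+4)(p+6)}{8} = \frac{(p+2)(p+4)(p+6)}{16} = 3\Delta$, as required.

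The main obstacle is simply making sure the correct earlier lemma is invoked for each of the four quantities and that the factorizations of $\Delta$ and $D$ are carried out so the cancellation is visible rather than buried in unfactored polynomials; there is no conceptual difficulty, since all the hard structural work (disjointness of columns, the conductor formula, the count of decomposables) was already done for $S(p)$ and transferred through Remarks~\ref{rem:invariants-row-changes-S0jptau}--\ref{rem:invariants-column-changes-Si0ptau} and Lemmas~\ref{lemma:q-number-Sijpr}--\ref{lemma:DqSijpr}. I would present the proof as: (1) reduce to $j=0$ by $j$-invariance of all four ingredients; (2) substitute the $i$-dependent formulas for $\lvert\leftsoper\rvert$ and $\qoper$; (3) verify the identity $3\Delta = \left(\frac{p}{2}+1\right)D$ via the common factor $(p+4)(p+6)$; (4) conclude.
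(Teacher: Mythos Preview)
Your proof is correct and follows essentially the same approach as the paper: both substitute the previously established formulas for $\lvert P\cap L\rvert$, $\lvert\leftsoper\rvert$, $\qoper$, $\lvert\Dqoper\rvert$, and $\rho$ into the definition of $\eliahouoper$ and then verify the single identity $3\Delta = \left(\tfrac{p}{2}+1\right)D$. Your organization is slightly cleaner---you separate the $j$-invariance from the $i$-cancellation and work with the factored forms $\Delta=\tfrac{1}{48}(p+2)(p+4)(p+6)$ and $D=\tfrac{1}{8}(p+4)(p+6)$ rather than expanded polynomials---but the substance is identical.
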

\begin{proof}
Again, the number of left primitives is $3$, that is, $\lvert \primitivesoper(S^{(i,j)}(p,\tau))\cap \leftsoper(S^{(i,j)}(p,\tau))\rvert=3$.
Next we use the previously proved results.
\begin{align*}
&\eliahouoper(S^{(i,j)}(p,\tau))=3 \lvert\leftsoper(S^{(i,j)}(p,\tau)) \rvert - \qoper(S^{(i,j)}(p,\tau)) \lvert \Dqoper(S^{(i,j)}(p,\tau))\rvert +\rho(S^{(i,j)}(p,\tau))\\
&\quad\quad =3 \left(\lvert\leftsoper(S(p,\tau))\rvert+\frac{i}{48}(p^3+12p^2+44p+48)\right) - \left(\qoper(S(p,\tau))+i\left(\frac{p}{2}+1\right)\right) \lvert \Dqoper(p,\tau))\rvert +\tau\\
&\quad\quad =\left(3 \lvert\leftsoper(S(p,\tau))\rvert - \qoper(S(p,\tau))\lvert \Dqoper(p,\tau))\rvert +\tau\right) + \frac{i}{16}(p^3+12p^2+44p+48)\\
&\quad\quad\hspace{10cm} - i\left(\frac{p}{2}+1\right) \lvert \Dqoper(p,\tau))\rvert.
\end{align*}
It remains to observe that $\left(\frac{p}{2}+1\right) \lvert \Dqoper(p,\tau))\rvert=\frac{1}{16}(p^3+12p^2+44p+48)$, which is straightforward:\\
$\left(\frac{p}{2}+1\right)\left(\frac{p^2}{4}+\frac{5}{2}p+6\right)/2=
\left(\frac{p^3}{8} + \frac{6p^2}{4} +\frac{11}{2}p  +6  \right)/2=
\frac{p^3}{16} + \frac{3p^2}{4} +\frac{11}{4}p  +3$
\end{proof}
The proof of the following theorem is lengthy, but straightforward. We leave it to Appendix~\ref{appendix:proof-wilf-Sijptau}.
\begin{theorem}\label{th:wilf-Sijptau}
Let $p$ be an even positive integer and let $\tau$, $i$ and $j$ be non-negative integers. Then 
\begin{align*}
\wilfoper(S^{(i,j)}(p,\tau))&=\frac{p^5}{192}+\frac{5p^4}{64}+\frac{p^3}{4}-\frac{7p^2}{16}+\frac{p}{6}+ \tau\cdot\left(\frac{p^4}{48}+\frac{3}{16}p^3 + \frac{1}{24}p^2 + 1\right)\\
&+i\cdot\left(\frac{p^5}{384}+\frac{3p^4}{64}+\frac{7p^3}{32}+\frac{p^2}{16}-\frac{5p}{12}\right) +  j\cdot\left(\frac{p^4}{48}+\frac{3p^3}{16}+\frac{p^2}{24}\right)\\
&+ij\cdot\left(\frac{p^4}{96}+\frac{p^3}{8}+\frac{5p^2}{24}\right) +  i\tau\cdot\left(\frac{p^4}{96}+\frac{p^3}{8}+\frac{5p^2}{24}\right).
\end{align*}
\end{theorem}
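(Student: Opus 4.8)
The plan is to compute $\wilfoper(S^{(i,j)}(p,\tau)) = \lvert\primitivesoper(S^{(i,j)}(p,\tau))\rvert\cdot\lvert\leftsoper(S^{(i,j)}(p,\tau))\rvert - \conductoroper(S^{(i,j)}(p,\tau))$ directly by substituting the three closed-form expressions already established earlier in the paper. Specifically, I would use Lemma~\ref{lemma:PSijpr} together with Remark~\ref{rem:number_primitives-Sptau} for the number of primitives, namely $\lvert\primitivesoper(S^{(i,j)}(p,\tau))\rvert = \frac{p^2}{8}+\frac{3+2\tau}{4}p+2 + j\frac{p}{2}$; Corollary~\ref{cor:LSijpr} for the number of left elements, $\lvert\leftsoper(S^{(i,j)}(p,\tau))\rvert=\frac{i+2}{48}p^3+\frac{i+18}{48}p^2 + \frac{44i+52}{48}p+i$; and Proposition~\ref{prop:conductor-Sijpr} together with Proposition~\ref{prop:conductor_Sptau} for the conductor, $\conductoroper(S^{(i,j)}(p,\tau)) = p\mu+\tau(\frac{p^2}{2}-1)+j\frac{p^2}{2} + i(\frac{p}{2}+1)(m+ j\frac{p}{2})$, where $\mu=\frac{p^2}{4}+2p+2$ and $m=\mu+\tau\frac{p}{2}$.

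The main body of the argument is then a single (large) polynomial expansion. I would first expand the product $\lvert\primitivesoper\rvert\cdot\lvert\leftsoper\rvert$, grouping terms by the monomials $1$, $i$, $j$, $\tau$, $ij$, $i\tau$ in the parameters $i,j,\tau$ (note $j\tau$ and higher-degree terms must cancel, since the claimed formula has no such terms — this is a useful internal consistency check). The coefficient of each monomial is itself a polynomial in $p$ of degree at most $5$. Then I would expand the conductor, which contributes only to the monomials $1$, $i$, $j$, $ij$, $i\tau$, $\tau$, and subtract. To organize the bookkeeping I would record, for each monomial $\sigma\in\{1,i,j,\tau,ij,i\tau,j\tau\}$, the contribution to $\lvert\primitivesoper\rvert\cdot\lvert\leftsoper\rvert$ and to $\conductoroper$ separately, verify the $j\tau$-contribution vanishes in the difference, and read off the remaining six coefficient-polynomials.

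Several pieces are already available for free and reduce the work: setting $i=j=0$ must reproduce $\wilfoper(S(p,\tau))$ from Proposition~\ref{prop:wilf_holds_for_all_p_rho}, so the $1$- and $\tau$-coefficients are exactly $\frac{p^5}{192}+\frac{5p^4}{64}+\frac{p^3}{4}-\frac{7p^2}{16}+\frac{p}{6}$ and $\frac{p^4}{48}+\frac{3}{16}p^3 + \frac{1}{24}p^2 + 1$ respectively; only the four coefficients of $i$, $j$, $ij$, $i\tau$ genuinely need to be computed. For the $i$-coefficient one multiplies $\frac{1}{48}(p^3+12p^2+44p+48)$ by the $\tau$-free part of $\lvert\primitivesoper\rvert$ and subtracts $(\frac{p}{2}+1)m$ restricted to its $\tau$-free, $j$-free part; for the $j$-coefficient one multiplies $\frac{p}{2}$ by the $i$-free, $\tau$-free part of $\lvert\leftsoper\rvert$ and subtracts $\frac{p^2}{2}$; the $ij$-coefficient comes from $\frac{p}{2}\cdot\frac{i}{48}(p^3+12p^2+44p+48)$ minus $i\cdot(\frac{p}{2}+1)\cdot j\frac{p}{2}$ (whose $ij$-part is $(\frac{p}{2}+1)\frac{p}{2}$); and the $i\tau$-coefficient comes entirely from the conductor's term $-i(\frac{p}{2}+1)\cdot\tau\frac{p}{2}$ together with the fact that $\lvert\leftsoper\rvert$ has no $\tau$-dependence, so it equals $+(\frac{p}{2}+1)\frac{p}{2}\cdot\,$(times the appropriate sign), matching the stated $\frac{p^4}{96}+\frac{p^3}{8}+\frac{5p^2}{24}$; indeed one checks $(\frac{p}{2}+1)\frac{p}{2}\cdot\frac{1}{?}$ — more carefully, since $m$ contains $\tau\frac{p}{2}$, the $i\tau$-part of $-i(\frac{p}{2}+1)m$ is $-i\tau(\frac{p}{2}+1)\frac{p}{2}$, and this must equal minus the stated coefficient, forcing the simple identity $(\frac{p}{2}+1)\frac{p}{2}\cdot\tfrac{1}{?}$; I would simply verify $(\frac{p}{2}+1)\frac{p}{2}$ expands correctly after clearing. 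The expected main obstacle is purely clerical: keeping the degree-$5$ polynomial arithmetic error-free across six monomials, for which I would cross-check by evaluating both sides at small values such as $p=2$, $(i,j,\tau)=(1,0,0),(0,1,0),(1,1,1)$ and comparing against the entries in Tables~\ref{table:examples-S05ptau},~\ref{table:examples-S50ptau} and~\ref{table:examples-S55ptau}.

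Since every summand in the resulting formula is manifestly a non-negative quantity for $p\ge 2$ and $i,j,\tau\ge 0$ — each coefficient-polynomial in $p$ is positive for $p\ge 2$, as one verifies by checking the dominant term and the value at $p=2$, except the $i$-coefficient $\frac{p^5}{384}+\frac{3p^4}{64}+\frac{7p^3}{32}+\frac{p^2}{16}-\frac{5p}{12}$ which is increasing and already positive at $p=2$ — and the constant part $\frac{p^5}{192}+\frac{5p^4}{64}+\frac{p^3}{4}-\frac{7p^2}{16}+\frac{p}{6}$ is positive for all even $p\ge 2$ by Proposition~\ref{prop:wilf_holds_for_all_p}, we conclude $\wilfoper(S^{(i,j)}(p,\tau))>0$, so these semigroups satisfy Wilf's conjecture.
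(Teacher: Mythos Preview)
Your approach is exactly the paper's: substitute the closed forms for $\lvert\primitivesoper\rvert$, $\lvert\leftsoper\rvert$, and $\conductoroper$ from the earlier results and expand, organizing by the monomials $1,i,j,\tau,ij,i\tau$. This is correct and is what Appendix~\ref{appendix:proof-wilf-Sijptau} does.

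One point in your bookkeeping sketch is wrong, however. You claim the $i\tau$-coefficient comes \emph{entirely} from the conductor term $-i(\tfrac{p}{2}+1)\tau\tfrac{p}{2}$, invoking that $\lvert\leftsoper\rvert$ has no $\tau$-dependence. But $\lvert\primitivesoper\rvert$ \emph{does} have $\tau$-dependence (the term $\tfrac{\tau p}{2}$), so the product $\lvert\primitivesoper\rvert\cdot\lvert\leftsoper\rvert$ also contributes an $i\tau$-term, namely
\[
\frac{\tau p}{2}\cdot\frac{i}{48}(p^3+12p^2+44p+48)=i\tau\left(\frac{p^4}{96}+\frac{p^3}{8}+\frac{11p^2}{24}+\frac{p}{2}\right).
\]
Combined with the conductor's contribution $-i\tau(\tfrac{p^2}{4}+\tfrac{p}{2})$ this yields the stated $i\tau(\tfrac{p^4}{96}+\tfrac{p^3}{8}+\tfrac{5p^2}{24})$. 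Your claim that $(\tfrac{p}{2}+1)\tfrac{p}{2}$ alone should match $\tfrac{p^4}{96}+\tfrac{p^3}{8}+\tfrac{5p^2}{24}$ is plainly false (the degrees don't agree), and you seem to sense this in your ``more carefully\ldots'' hedge without resolving it. This is a local slip, not a structural one; the computation goes through once you include the missing term.
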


There are various consequences that we can now state. It is immediate from Theorem~\ref{th:wilf-Sijptau} that Wilf's conjecture holds for every numerical semigroup of the form $S^{(i,j)}(p,\tau)$, as the following corollary says.
\begin{corollary}\label{cor:wilf_holds_for_the_family}
Let $p$ be an even positive integer and let $\tau$ be a non negative integer. Then $\wilfoper (S^{(i,j)}(p,\tau)) > 0$, for every non-negative integers $i,j$.
\end{corollary}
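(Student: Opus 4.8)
The plan is to read the corollary straight off the closed-form expression for $\wilfoper(S^{(i,j)}(p,\tau))$ furnished by Theorem~\ref{th:wilf-Sijptau}. That formula displays $\wilfoper(S^{(i,j)}(p,\tau))$ as a sum of six terms: a ``base'' term, plus terms proportional to $\tau$, to $i$, to $j$, to $ij$, and to $i\tau$, where in each case the proportionality factor is an explicit polynomial in $p$. Since $i$, $j$ and $\tau$ are non-negative, it suffices to check that each of these six polynomials is positive for every even positive integer $p$.

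First I would dispatch the factors that are manifestly positive: the coefficients of $\tau$, of $j$, of $ij$ and of $i\tau$ are polynomials in $p$ with all coefficients non-negative (and not identically zero), hence strictly positive for every $p\ge 1$, in particular for every even positive $p$.

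It then remains to treat the two pieces that carry a negative monomial. The base term equals $\wilfoper(S(p))=\frac{p^5}{192}+\frac{5p^4}{64}+\frac{p^3}{4}-\frac{7p^2}{16}+\frac{p}{6}$, which was already shown to be positive in Proposition~\ref{prop:wilf_holds_for_all_p} (it takes the value $2$ at $p=2$ and is increasing in $p$). For the coefficient of $i$, namely $\frac{p^5}{384}+\frac{3p^4}{64}+\frac{7p^3}{32}+\frac{p^2}{16}-\frac{5p}{12}$, I would evaluate at $p=2$, where it equals $2>0$, and then note that for $p\ge 2$ its derivative with respect to $p$ is positive (the contributions of the monomials of degree $\ge 2$ already outweigh the single negative linear term at $p=2$), so it is positive on all even integers $p\ge 2$. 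Combining the six positivity statements gives $\wilfoper(S^{(i,j)}(p,\tau))>0$.

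The substance of the argument is therefore entirely contained in Theorem~\ref{th:wilf-Sijptau}, whose (lengthy but routine) verification is carried out in Appendix~\ref{appendix:proof-wilf-Sijptau}; the only mild obstacle at the level of the corollary is the elementary positivity check for the $i$-coefficient, which reduces to one evaluation together with a monotonicity remark.
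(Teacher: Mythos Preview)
Your argument is correct and is exactly the approach the paper intends: the paper states the corollary as ``immediate from Theorem~\ref{th:wilf-Sijptau}'' without further proof, and what you have written simply unpacks that immediacy by checking that each of the six $p$-polynomials in the formula is positive for even $p\ge 2$. Your additional care with the $i$-coefficient (the one term the paper does not isolate elsewhere) is appropriate and the computation at $p=2$ together with monotonicity is a clean way to handle it.
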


Another interesting consequence is given by the following corollary.

\begin{corollary}\label{cor:large-Wilf}
 Given integers $n$ and $N$, there are infinitely many numerical semigroups~$S$ such that $\eliahouoper(S)=n$ and $\wilfoper(S)>N$. 
\end{corollary}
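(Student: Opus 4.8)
The plan is to combine Corollary~\ref{cor:every_integer_is_Eliahou_number} (or rather Theorem~\ref{th:same_w0-Sijptau}, which refines it) with Theorem~\ref{th:wilf-Sijptau} and Remark~\ref{rem:infinite_sequence}. Fix the target integer $n$. First I would choose an even positive integer $p$ large enough that $\frac{p}{4}\left(1-\frac{p}{2}\right)\le n$, and set $\tau = n - \frac{p}{4}\left(1-\frac{p}{2}\right)$, a non-negative integer. By Theorem~\ref{th:formula_w0} we have $\eliahouoper(S(p,\tau))=n$, and then by Theorem~\ref{th:same_w0-Sijptau} every semigroup in the family $\left\{S^{(i,j)}(p,\tau)\mid i,j\in\mathbb N\right\}$ also has Eliahou number exactly $n$. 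So the Eliahou constraint is automatically satisfied for the whole (infinite) family, uniformly.

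Next I would handle the Wilf constraint. The key observation is that, with $p$ and $\tau$ now fixed, the formula of Theorem~\ref{th:wilf-Sijptau} for $\wilfoper(S^{(i,j)}(p,\tau))$ is, as a function of $i$ and $j$, of the shape $a + bi + cj + dij$ where $a,b,c,d$ are (explicit) constants depending only on $p$ and $\tau$, and crucially $b,c,d>0$ for every even positive $p$ (the coefficient polynomials $\frac{p^5}{384}+\frac{3p^4}{64}+\frac{7p^3}{32}+\frac{p^2}{16}-\frac{5p}{12}$ of $i$, $\frac{p^4}{48}+\frac{3p^3}{16}+\frac{p^2}{24}$ of $j$, and $\frac{p^4}{96}+\frac{p^3}{8}+\frac{5p^2}{24}$ of $ij$ are all positive for $p\ge 2$; one checks the $i$-coefficient is positive at $p=2$, giving $\frac{32}{384}+\frac{48}{64}+\frac{56}{32}+\frac{4}{16}-\frac{10}{12}>0$, and it is increasing). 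Hence $\wilfoper(S^{(i,j)}(p,\tau))$ is strictly increasing in each of $i$ and $j$ separately and tends to $+\infty$ along, say, the diagonal family $S^{(j,j)}(p,\tau)$ as $j\to\infty$. Therefore for all sufficiently large $j$ we get $\wilfoper(S^{(j,j)}(p,\tau))>N$.

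Finally I would invoke Remark~\ref{rem:infinite_sequence}, which states that the semigroups $S^{(j,j)}(p,\tau)$ are pairwise distinct as $j$ ranges over $\mathbb N$ (they have distinct multiplicities, or distinct conductors). Thus the set $\left\{S^{(j,j)}(p,\tau)\mid j\ge j_0\right\}$, for a suitable threshold $j_0$, is an infinite set of pairwise distinct numerical semigroups, each satisfying $\eliahouoper(S)=n$ and $\wilfoper(S)>N$. This proves the corollary.

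I do not expect any genuine obstacle here: the statement is essentially a packaging of the two main theorems of Sections~\ref{sec:Sptau} and~\ref{sec:Sijptau}. The only point requiring a small explicit check is the positivity of the linear coefficients in $i$ and $j$ in the Wilf formula (so that the Wilf number actually grows without bound within a fixed-$(p,\tau)$ subfamily rather than staying bounded); once that is noted, the rest is immediate. A minor alternative, if one preferred to avoid even that check, would be to let $p$ itself grow instead: along any sequence with $\eliahouoper(S(p,\tau_p))=n$ the Wilf numbers still diverge by Remark~\ref{rem:unexpected}-type behaviour, but the cleanest route is the fixed-$p$, varying-$(i,j)$ argument above.
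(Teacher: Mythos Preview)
Your proof is correct and follows essentially the same approach as the paper: fix $(p,\tau)$ so that $\eliahouoper(S(p,\tau))=n$, then let one of the indices in $S^{(i,j)}(p,\tau)$ run to infinity and use Theorem~\ref{th:wilf-Sijptau} to see that the Wilf number diverges. The only cosmetic difference is that the paper works with the subfamily $S^{(0,j)}(p,\tau)$ rather than the diagonal $S^{(j,j)}(p,\tau)$, which has the mild advantage that the $j$-coefficient $\frac{p^4}{48}+\frac{3p^3}{16}+\frac{p^2}{24}$ is a sum of positive terms, so no separate positivity check (like your evaluation of the $i$-coefficient at $p=2$) is needed.
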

\begin{proof}
  By Corollary~\ref{cor:every_integer_is_Eliahou_number}, there exists a numerical semigroup $S(p,\tau)$ such that $\eliahouoper(S(p,\tau))=n$. Let us now consider the family $(S^{(0,j)}(p,\tau))_{j\in\mathbb N}$ of numerical semigroups.
  Let $k_0$ be an integer such that $n+k_0\frac{p^2(p^2+9p+2)}{48} >N$ (which clearly exists). Then, using Theorem~\ref{th:wilf-Sijptau}, $\eliahouoper(S^{(0,k)}(p,\tau))=n+k_0\frac{p^2(p^2+9p+2)}{48}>N$, whenever $k>k_0$.
\end{proof}

\appendix

\section{A remark concerning the genus}\label{sec:remark-genus}
Let $n$ be an integer. 
Consider the set $\mathcal S_n$ of numerical semigroups with Eliahou number $n$:
$$\mathcal S_n = \left\{S\mid S \text{ is a numerical semigroup such that }\eliahouoper(S)=n\right\}.$$

In Section~\ref{sec:Sijptau} we proved that all the $\mathcal S_n$'s consist of infinitely many numerical semigroups.
As a consequence, we have that, for all $n$, the set $$GE_n = \left\{\genusoper(S)\mid S \in \mathcal S_n\right\}$$ of positive integers is, in particular, non empty and therefore has a minimum. Furthermore, since there are only finitely many numerical semigroups with a given genus, none of the $GE_n$ is bounded (see below for an alternative proof).

We remark that Fromentin's computations (see Section~\ref{subsec:fromentin}) show that the minimum of $GE_{-1}$ is $43$ and that if $n<-1$, then $\min GE_{n} > 60$.

A related and general question is:
\begin{question}\label{quest:smallest-genus-eliahou-negative}
  Le $n$ be an integer. Find the minimum of $GE_n$.
\end{question}
It seems to be difficult to get some hint on how to attack this question through (presently available) computational means. Finding some good bounds is probably challenging enough. To this end, we prove the following proposition:

\begin{proposition}\label{prop:genus-Sp}
  Let $p$ be an even positive integer. Then
 \begin{align*}
  &\genusoper(S(p))= p\left(\frac{5}{24}p^2+\frac{13}{8}p+\frac{11}{12}\right)
 \end{align*}
\end{proposition}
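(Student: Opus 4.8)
The plan is to compute the genus of $S(p)$ by counting the gaps level by level, using the partition into levels $I_0,\dots,I_p$ and the structural description of $S(p)$ already established. Recall that $\genusoper(S(p)) = \lvert \mathbb{N}\setminus S(p)\rvert$, and since the conductor is $p\mu$ (Corollary~\ref{cor:conductor-Sp}), all gaps lie in $\left[0,p\mu\right[$, i.e.\ in levels $0$ through $p-1$. A cleaner route, which I would follow, is to use the identity relating genus, conductor, and number of left elements. For a numerical semigroup with conductor $c$ and set of left elements $\leftsoper$, the elements of $\left[0,c\right[$ split into $\leftsoper$ (the ones in $S$) and the gaps; hence $\genusoper(S) = c - \lvert\leftsoper\rvert$, because every gap is smaller than $c$ and every element of $\left[0,c\right[$ in $S$ is by definition a left element.

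So the main computation is simply $\genusoper(S(p)) = \conductoroper(S(p)) - \lvert\leftsoper(S(p))\rvert = p\mu - \lvert L\rvert$. First I would substitute $\mu = \frac{p^2}{4}+2p+2$, so that $p\mu = \frac{p^3}{4} + 2p^2 + 2p$. Then I would substitute the value of $\lvert L\rvert$ from Corollary~\ref{cor:number_left_elts}, namely $\lvert L\rvert = \frac{p^3}{24}+\frac{3}{8}p^2+\frac{13}{12}p$. Subtracting gives
\begin{align*}
\genusoper(S(p)) &= \left(\frac{p^3}{4}+2p^2+2p\right) - \left(\frac{p^3}{24}+\frac{3}{8}p^2+\frac{13}{12}p\right)\\
&= \left(\frac{6}{24}-\frac{1}{24}\right)p^3 + \left(2-\frac{3}{8}\right)p^2 + \left(2-\frac{13}{12}\right)p\\
&= \frac{5}{24}p^3 + \frac{13}{8}p^2 + \frac{11}{12}p = p\left(\frac{5}{24}p^2 + \frac{13}{8}p + \frac{11}{12}\right),
\end{align*}
which is exactly the claimed formula.

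The only step requiring any care is the justification that $\genusoper(S) = c - \lvert\leftsoper\rvert$; I would state this explicitly as the key observation, noting that $\left[0,c\right[$ is partitioned into $\leftsoper(S)$ and the set of gaps of $S$ (all gaps being $<c$ by definition of the conductor), so their cardinalities add to $c$. After that the proof is a one-line substitution using two already-proved results (Corollary~\ref{cor:conductor-Sp} for the conductor and Corollary~\ref{cor:number_left_elts} for $\lvert L\rvert$) followed by the elementary arithmetic above. There is no real obstacle here; the main risk is merely an arithmetic slip in combining the fractions, which I would double-check by testing $p=2$: the formula gives $2\left(\frac{5}{6}+\frac{13}{4}+\frac{11}{12}\right) = 2\cdot 5 = 10$, matching the genus of $S(2)$ listed in Table~\ref{table:examples-Sp}.
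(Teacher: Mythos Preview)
Your proof is correct and follows essentially the same approach as the paper: compute $\genusoper(S(p)) = p\mu - \lvert L\rvert$ using Corollary~\ref{cor:conductor-Sp} for the conductor and Corollary~\ref{cor:number_left_elts} for $\lvert L\rvert$, then simplify. The paper's proof is slightly terser (it does not expand $p\mu$ before subtracting and omits the explicit justification of $\genusoper(S) = c - \lvert L\rvert$), but the substance is identical.
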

\begin{proof}
  Using the fact that $\conductoroper(S(p))=p\mu$ (Corollary~\ref{cor:conductor-Sp}) and the formula for $\lvert \leftsoper(S(p))\rvert$ given by Corollary~\ref{cor:number_left_elts}, we get:
 \begin{align*}
 \genusoper(S(p)) & =p\mu-\lvert L\rvert\\
                  & =p\left(\frac{p^2}{4}+2p+2\right)-\left(\frac{p^3}{24}+\frac{3}{8}p^2 + \frac{13}{12}p\right)\\
		  & =p\left(\frac{5}{24}p^2+\frac{13}{8}p+\frac{11}{12}\right).\qedhere
\end{align*}
\end{proof}

One can state similar results for the others families considered, namely for $S^{(0,j)}(p,\tau)$. 
\commentgray{Since we are interested in semigroups having as small genus as possible, there is no interest in stating the general result for $S^{(i,j)}(p,\tau)$, with $i$ or $j$ possibly non-zero.}

\begin{proposition}\label{prop:genus-S0jptau}
 Let $p$ be an even positive integer and let $\tau$ and $j$ be non-negative integers. Then
\begin{align*}
  \genusoper(S^{(0,j)}(p,\tau))=\genusoper(S(p))+(\tau+j)\frac{p^2}{2}-\tau.
\end{align*}
\end{proposition}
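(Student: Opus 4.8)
\textbf{Proof proposal for Proposition~\ref{prop:genus-S0jptau}.}

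The plan is to compute the genus of $S^{(0,j)}(p,\tau)$ directly as $\conductoroper - \lvert \leftsoper\rvert$ and compare it with the same expression for $S(p)$, using the formulas already established. First I would record that $\genusoper(S^{(0,j)}(p,\tau)) = \conductoroper(S^{(0,j)}(p,\tau)) - \lvert \leftsoper(S^{(0,j)}(p,\tau))\rvert$, since every integer below the conductor that is not a left element is a gap and every integer at or above the conductor lies in the semigroup. For the conductor, I would invoke Proposition~\ref{prop:conductor-Sijpr} with $i=0$, which gives $\conductoroper(S^{(0,j)}(p,\tau)) = \conductoroper(S(p,\tau)) + j\frac{p^2}{2}$, and then Proposition~\ref{prop:conductor_Sptau}, which gives $\conductoroper(S(p,\tau)) = \conductoroper(S(p)) + \tau\left(\frac{p^2}{2}-1\right)$. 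Combining, $\conductoroper(S^{(0,j)}(p,\tau)) = \conductoroper(S(p)) + (\tau+j)\frac{p^2}{2} - \tau$.

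Next I would handle the left elements. By Remark~\ref{rem:number_lefts-Sp-Sptau} we have $\lvert\leftsoper(S(p,\tau))\rvert = \lvert\leftsoper(S(p))\rvert$, and by Lemma~\ref{lemma:LSijpr} (or Remark~\ref{rem:invariants-column-changes-Si0ptau}) with $i=0$ we have $\lvert\leftsoper(S^{(0,j)}(p,\tau))\rvert = \lvert\leftsoper(S(p,\tau))\rvert$. Hence $\lvert\leftsoper(S^{(0,j)}(p,\tau))\rvert = \lvert\leftsoper(S(p))\rvert$, i.e.\ the number of left elements is unaffected by both the $\tau$-modification and the $j$-modification (this is exactly the ``no new left elements'' content of the constructions in Sections~\ref{sec:Sptau} and~\ref{sec:Sijptau}). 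Subtracting,
\[
\genusoper(S^{(0,j)}(p,\tau)) = \Big(\conductoroper(S(p)) + (\tau+j)\tfrac{p^2}{2} - \tau\Big) - \lvert\leftsoper(S(p))\rvert = \genusoper(S(p)) + (\tau+j)\tfrac{p^2}{2} - \tau,
\]
using $\genusoper(S(p)) = \conductoroper(S(p)) - \lvert\leftsoper(S(p))\rvert$ from Proposition~\ref{prop:genus-Sp}'s proof.

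There is no real obstacle here: the statement is a bookkeeping consequence of the conductor formula (Proposition~\ref{prop:conductor-Sijpr}) and of the two ``left elements are invariant'' facts. The only point that deserves a sentence of justification is that the pictorial ``adding columns with no left elements'' description really does leave $\lvert\leftsoper\rvert$ unchanged; but this has already been certified in Remark~\ref{rem:number_lefts-Sp-Sptau} and Remark~\ref{rem:invariants-column-changes-Si0ptau}, so one simply cites them. If a self-contained argument were wanted instead of citing Proposition~\ref{prop:conductor-Sijpr}, one could alternatively combine Remark~\ref{rem:conductor-Sij+pr} ($c^{(0,j+1)} = c^{(0,j)} + \frac{p^2}{2}$) with an easy induction on $j$ and then add the $\tau$-contribution from Proposition~\ref{prop:conductor_Sptau}.
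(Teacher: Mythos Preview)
Your proposal is correct and follows essentially the same approach as the paper: compute the conductor via Propositions~\ref{prop:conductor_Sptau} and~\ref{prop:conductor-Sijpr} (with $i=0$), note that $\lvert\leftsoper\rvert$ is unchanged by the $\tau$- and $j$-modifications (Remarks~\ref{rem:number_lefts-Sp-Sptau} and~\ref{rem:invariants-column-changes-Si0ptau}), and subtract. The paper's version is just terser; yours spells out the intermediate steps and the identity $\genusoper=\conductoroper-\lvert\leftsoper\rvert$ explicitly, but the substance is identical.
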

\begin{proof}
 By Proposition~\ref{prop:conductor_Sptau} and Proposition~\ref{prop:conductor-Sijpr} (with $i=0$) we conclude that $\conductoroper(S^{(0,j)}(p,\tau))= \conductoroper(S(p))+(\tau+j)\frac{p^2}{2}-\tau$. By Remark~\ref{rem:invariants-column-changes-Si0ptau}, all the semigroups under consideration have the same number of elements: $\lvert \leftsoper(S(p)\rvert$.
\end{proof}

Since the Eliahou number of $S^{(0,j)}(p,\tau)$ does not vary with $j$ and, for any given $n$, there exist $p$ and $\tau$ such that $\eliahouoper(S(p,\tau))=n$, we have that $\left\{\genusoper(S^{(0,j)}(p,\tau))\mid j\in \mathbb N\right\}\subseteq GE_n$. We can therefore conclude, as already observed, that the set $GE_n$ is unbounded for any $n$.

\begin{corollary}\label{cor:upper-bounds-for-minimum-negative}
  Let $n$ be a negative integer and let $p=\left\lceil 1 + \sqrt{1-8n}\right\rceil$. Then the numerical semigroup $S=S\left(p,n-\frac{p}{4}(1-p/2)\right)$ is such that 
  $$\genusoper(S)=\frac{p}{4}\left(\frac{p^3}{4}+\frac{p^2}{3} +6p + \frac{14}{3}\right) + n\left(\frac{p^2}{2}-1\right).$$
  In particular, we get the following bound:
\begin{align}  
 \min GE_n \le \frac{p^4}{16}+\frac{p^3}{12} +\frac{3p^2}{2} + \frac{14p}{12} + n\left(\frac{p^2}{2}-1\right).\label{eq:upper-bounds-for-minimum-negative}
\end{align}
\end{corollary}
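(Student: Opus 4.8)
The plan is to take $S=S(p,\tau)$ with $\tau=n-\frac{p}{4}\left(1-\frac{p}{2}\right)$, check that this is a genuine member of $\mathcal S_n$, and then read off its genus from the formulas already established. First I would verify that $\tau$ is a non-negative integer: since $p=\left\lceil 1+\sqrt{1-8n}\right\rceil$ (and $p$ is even, so that $S(p,\tau)$ is defined) we have $p\ge 1+\sqrt{1-8n}$, equivalently $(p-1)^2\ge 1-8n$, equivalently $\frac{p}{4}\left(1-\frac{p}{2}\right)=\frac{p}{4}-\frac{p^2}{8}\le n$; hence $\tau\ge 0$. By Theorem~\ref{th:formula_w0} (this is exactly the content of Corollary~\ref{cor:every_negative_integer_is_Eliahou_number}), $\eliahouoper(S)=\frac{p}{4}\left(1-\frac{p}{2}\right)+\tau=n$, so $S\in\mathcal S_n$, $\genusoper(S)\in GE_n$, and therefore $\min GE_n\le\genusoper(S)$. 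Thus the ``in particular'' inequality~(\ref{eq:upper-bounds-for-minimum-negative}) is nothing more than the claimed value of $\genusoper(S)$, and it suffices to compute that value.

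For the computation I would use $S(p,\tau)=S^{(0,0)}(p,\tau)$ together with Proposition~\ref{prop:genus-S0jptau} at $j=0$, which gives $\genusoper(S(p,\tau))=\genusoper(S(p))+\tau\left(\frac{p^2}{2}-1\right)$, and then the closed form $\genusoper(S(p))=p\left(\frac{5}{24}p^2+\frac{13}{8}p+\frac{11}{12}\right)$ from Proposition~\ref{prop:genus-Sp}. (Alternatively one argues directly: $\genusoper(S)=\conductoroper(S)-\lvert\leftsoper(S)\rvert$, because the gaps are precisely the non-left elements below the conductor, and one then plugs in Proposition~\ref{prop:conductor_Sptau} for $\conductoroper$ and Remark~\ref{rem:number_lefts-Sp-Sptau} together with Corollary~\ref{cor:number_left_elts} for $\lvert\leftsoper\rvert$.) Substituting $\tau=\frac{p^2}{8}-\frac{p}{4}+n$ and collecting terms yields
\begin{align*}
\genusoper(S)&=p\left(\tfrac{5}{24}p^2+\tfrac{13}{8}p+\tfrac{11}{12}\right)+\left(\tfrac{p^2}{8}-\tfrac{p}{4}+n\right)\left(\tfrac{p^2}{2}-1\right)\\
&=\frac{p^4}{16}+\frac{p^3}{12}+\frac{3p^2}{2}+\frac{14p}{12}+n\left(\frac{p^2}{2}-1\right),
\end{align*}
and one checks this coincides with $\frac{p}{4}\left(\frac{p^3}{4}+\frac{p^2}{3}+6p+\frac{14}{3}\right)+n\left(\frac{p^2}{2}-1\right)$, which is the asserted closed form; the displayed polynomial is then exactly the bound in~(\ref{eq:upper-bounds-for-minimum-negative}).

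The only genuine subtlety is ensuring $p$ is even so that $S(p,\tau)$ makes sense: if $\left\lceil 1+\sqrt{1-8n}\right\rceil$ happens to be odd, one replaces it by the next integer, which still satisfies $p\ge 1+\sqrt{1-8n}$ and hence still gives $\tau\ge 0$, and nothing else in the argument changes. Beyond that, I expect no real obstacle: the entire proof is the non-negativity check for $\tau$, the citation of the Eliahou-number formula, and the polynomial bookkeeping in the substitution of $\tau$.
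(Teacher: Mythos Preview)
Your proof is correct and follows essentially the same route as the paper: invoke Corollary~\ref{cor:every_negative_integer_is_Eliahou_number} to get $\eliahouoper(S)=n$, then combine Proposition~\ref{prop:genus-S0jptau} (at $j=0$) with Proposition~\ref{prop:genus-Sp} and substitute $\tau=n-\tfrac{p}{4}(1-\tfrac{p}{2})$. Your explicit verification that $\tau\ge 0$ and your remark on the parity of $p$ are welcome additions that the paper leaves implicit.
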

\begin{proof}
  Recall that, by Corollary~\ref{cor:every_negative_integer_is_Eliahou_number}, $\eliahouoper(S)=n$.
  By Proposition~\ref{prop:genus-S0jptau}, we have:
  \begin{align*}
   \genusoper(S)&=\genusoper(S(p))+\tau\left(\frac{p^2}{2}-1\right)\\
   &= p\left(\frac{5}{24}p^2+\frac{13}{8}p+\frac{11}{12}\right) + \left(n-\frac{p}{4}(1-p/2)\right)\left(\frac{p^2}{2}-1\right)\\
   &= \frac{p}{4}\left(\frac{5}{6}p^2+\frac{13}{2}p+\frac{11}{3}+\left(\frac{p}{2}-1\right)\left(\frac{p^2}{2}-1\right)\right)         + n\left(\frac{p^2}{2}-1\right)\\
   &= \frac{p}{4}\left(\frac{5}{6}p^2+\frac{13}{2}p+\frac{11}{3}+ \frac{p^3}{4} -\frac{p^2}{2} -\frac{p}{2} +1\right)         + n\left(\frac{p^2}{2}-1\right)\\
   &= \frac{p}{4}\left(\frac{p^3}{4}+\frac{p^2}{3} +6p + \frac{14}{3}\right) + n\left(\frac{p^2}{2}-1\right)\qedhere
  \end{align*}
\end{proof}
The bound given by Corollary~\ref{cor:upper-bounds-for-minimum-negative} is not tight in general, as the below discussion shows. 
\smallskip

For $n=-1$ we have $p=4$ and $n-\frac{p}{4}(1-p/2)=0$. As we know, $S(4)=\langle 14, 22, 23\rangle_{56}$ is such that $\eliahouoper(S(4))=-1$ and $\genusoper(S(4))=43$, which is a minimum, by Fromentin's computations. Evaluating $\frac{p^4}{16}+\frac{p^3}{12} +\frac{3p^2}{2} + \frac{14p}{12} + n\left(\frac{p^2}{2}-1\right)$ with $p=4$ and $n=-1$ one gets $43$. Therefore, we can state the following remark.

\begin{remark}\label{rem:tight-bound--1}
 The bound given by Corollary~\ref{cor:upper-bounds-for-minimum-negative} is tight for $n=-1$.
\end{remark}

The tightness is no longer true for $n=-2$. In this case $p=6$ and ${n-\frac{p}{4}(1-p/2)=1}$. We have $S(6,1)=\langle 26, 44, 45\rangle_{155}$, which is a semigroup of genus $126$ (one can compute it directly or evaluate the right side of (\ref{eq:upper-bounds-for-minimum-negative}) at $p=6$ and $n=-2$). But one can verify that $\langle 30, 44, 48, 49\rangle_{118}$ has genus $99$ and Eliahou number $-2$, thus $126$ is not tight.
We do not know whether $99$ is tight.
\begin{question}\label{quest:smallest-genus-eliahou--2-99}
  Is $99$ the minimum possible genus of all the numerical semigroups satisfying $\eliahouoper(S)=-2$?
\end{question}

The bound given by Corollary~\ref{cor:upper-bounds-for-minimum-negative} for $n=-3$ is smaller than the one given for $n=-2$ and we do not know whether it is tight.
For $n=-3$, we get $p=6$ and $n-\frac{p}{4}(1-p/2)=0$. The semigroup $S(6,0)=\langle 23, 39, 40\rangle_{138}$ has genus $109$ and Eliahou number $-3$.
\begin{question}\label{quest:smallest-genus-eliahou--3-109}
  Is $109$ a lower bound for the set of genus of all numerical semigroups satisfying $\eliahouoper(S)=-3$?
\end{question}
\medskip

Let us now make some comments on a possible attempt to solve Questions~\ref{quest:smallest-genus-eliahou--2-99} and~\ref{quest:smallest-genus-eliahou--3-109} by exhaustive search.

Denote by $n_g$ the number of numerical semigroups of genus $g$ and by $N_g$ the number of numerical semigroups of genus not greater than $g$. The following was conjectured by Bras-Amoros~\cite{Bras-Amoros2008SF-Fibonacci-like} and proved by Zhai~\cite{Zhai2012SF-Fibonacci}.
\begin{proposition}\label{prop:fibonacci-like}
The sequence of the number of numerical semigroups by genus behaves like the Fibonacci sequence, that is, 
$$\lim_{g\to +\infty}\frac{n_{g-2}+n_{g-1}}{n_g}=1.$$ 
\end{proposition}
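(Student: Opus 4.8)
The plan is to reduce the statement to the sharper asymptotic $n_g\sim C\varphi^g$ as $g\to\infty$ for some positive constant $C$, where $\varphi=\frac{1+\sqrt5}{2}$: granting this and using $\varphi^2=\varphi+1$, one gets $\frac{n_{g-2}+n_{g-1}}{n_g}\to\varphi^{-2}+\varphi^{-1}=\varphi^{-2}(\varphi+1)=1$. So the goal becomes the asymptotic $n_g\sim C\varphi^g$. Throughout I would stratify the numerical semigroups of genus $g$ by their \emph{depth} $\qoper(S)=\lceil\conductoroper(S)/\multiplicityoper(S)\rceil$ (the $\qoper$-number of Section~\ref{subsec:partition-and-figures}), writing $n_g=\sum_{q\ge1}n_g^{(q)}$; since $\multiplicityoper(S)-1$ is always a gap one has $\conductoroper(S)\le 2g$, so for each $g$ only finitely many $n_g^{(q)}$ are nonzero.

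First I would establish the lower bound by counting the shallow semigroups explicitly. If $\qoper(S)\le2$ then $\conductoroper(S)\le2\multiplicityoper(S)$, so every sum of two nonzero elements of $S$ automatically lies in $\left[\conductoroper(S),+\infty\right[\subseteq S$; writing $m=\multiplicityoper(S)$ and $F=\Frobeniusoper(S)$, such an $S$ is then described freely by the choice of $m$, of $F$ with $m\le F\le 2m-1$, and of an arbitrary ``extra small elements'' set $U\subseteq\{m+1,\dots,F-1\}$, with $\genusoper(S)=F-1-\lvert U\rvert$. Evaluating the resulting sum of binomial coefficients (via the hockey-stick identity, which brings it to $\sum_k\binom{g-k}{k}$) yields $n_g^{(1)}+n_g^{(2)}=F_{g+1}$, the $(g+1)$-st Fibonacci number; this ``Fibonacci layer'' is exactly what accounts for the shape of the conjecture, and in particular it gives $\liminf_g n_g/\varphi^g\ge\varphi/\sqrt5>0$.

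For the matching upper bound I would rely on the two ingredients that make up Zhai's theorem. The hard one, and the genuine obstacle, is the structural estimate that \emph{semigroups of large depth are negligible}: the number of genus-$g$ numerical semigroups with $\Frobeniusoper(S)\ge3\multiplicityoper(S)$, i.e.\ $\qoper(S)\ge4$, is $o(\varphi^g)$ --- in fact it can be bounded by a quantity of exponential rate strictly below $\varphi$. Heuristically, a semigroup whose conductor is large relative to its multiplicity is forced to have a long, heavily constrained initial block (many ``levels'' in the sense of Section~\ref{subsec:partition-and-figures}), so that such semigroups admit an encoding by combinatorial data of subcritical size; turning that heuristic into an honest bound is where essentially all the difficulty lies. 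The second, much softer ingredient is a finer combinatorial analysis of the depth-$3$ case, in the same spirit as the depth-$\le2$ computation above, giving $n_g^{(3)}\sim C_3\varphi^g$; together with the Fibonacci layer this produces $\sum_{q\le3}n_g^{(q)}\sim C\varphi^g$.

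Combining the pieces, $n_g=\sum_{q\le3}n_g^{(q)}+\sum_{q\ge4}n_g^{(q)}=C\varphi^g+o(\varphi^g)\sim C\varphi^g$, which yields the asymptotic and hence, by the first paragraph, the stated limit. In short, the shallow counting and the final assembly are routine bookkeeping; the one substantive step, with no shortcut visible from the shallow analysis, is the estimate that numerical semigroups of large depth are too few to affect the leading term.
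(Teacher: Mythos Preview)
The paper does not prove this proposition: it is stated as a theorem of Zhai (conjectured by Bras-Amor{\'o}s) and simply cited, with no argument supplied. So there is no proof in the paper to compare your proposal against.

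That said, your outline is a reasonable high-level sketch of the Zhao--Zhai strategy. The reduction to $n_g\sim C\varphi^g$ via $\varphi^2=\varphi+1$ is correct, and the depth-$\le2$ count $n_g^{(1)}+n_g^{(2)}=F_{g+1}$ is a known elementary fact that you have set up correctly. You rightly identify the substantive step as the bound on semigroups of large depth, and you are honest that it functions here as a black box. One small point: in the actual argument the depth-$3$ layer is not isolated via a separate asymptotic $n_g^{(3)}\sim C_3\varphi^g$; rather the whole range $\qoper\le3$ (equivalently $\Frobeniusoper(S)<3\multiplicityoper(S)$) is analysed together to produce the constant $C$, and Zhai's contribution is precisely the $o(\varphi^g)$ bound on the complement. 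This does not change the overall shape of your sketch, but as written it is an outline that defers the real work to the cited theorem rather than an independent proof.
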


This implies that $\lim_{g\to +\infty}\frac{n_{g}}{n_{g-1}}=\varphi$, where $\varphi$ is the golden ratio.
Since the golden ratio is greater than $1,61$, it is not surprising that one can experimentally observe $n_g \ge n_{g-1}\cdot(1.6)$ even for small $g$. This can be observed in the various tables available in the literature containing the numbers $n_g$. The one that, to the best of our knowledge, goes the farthest is the one given by Fromentin and Hivert~\cite{FromentinHivert} where, in particular, the number of numerical semigroups of genus $67$ is given: $n_{67}=377\, 866\, 907\, 506\, 273$. 
Summing up the number of numerical semigroups of genus not greater $67$, one gets $$N_{67}=\sum_{g\in\{1,\ldots,67\}}n_g\simeq 9.86\cdot 10^{14}.$$
One can guess that the number of numerical semigroups of genus $77+i$ is approximately $n_{109} \ge n_{67}\cdot(1.6)^{i}$ and consequently $$N_{109}=\sum_{g\in\{1,\ldots,109\}}n_g\simeq 9.86\cdot 10^{14}+n_{67}\frac{(1.6)^{43}-1.6}{1.6-1}.$$

By doing some computations (with several unfavourable rounding) one gets that the number of numerical semigroups up to genus $109$ is well over $(3.2)\cdot 10^{23}$. 
It is out of reach using today’s computational means to do even the most trivial computation for such a number of semigroups. 

Suppose that one could do some computation with $10^6$ semigroups per second. The overall computation with the semigroups of genus up to $109$ would require $(3.2)\cdot 10^{17}$ seconds. Taking into account that one year has less that $(1.6)\cdot 10^7$ seconds, the computation would take about $2\cdot 10^{10}$ years, which is a little bit too much to wait for.

Although computing the Eliahou number of one numerical semigroup of small genus, up to $109$, say, takes almost no time, the above discussion should make clear that exhaustive search is not satisfactory when one has to deal with huge numbers of numerical semigroups. Further theoretical results are needed.

To the best of our knowledge, exhaustive search has only been performed for semigroups up to genus $60$. In particular, we are not able to guarantee that $61$ is not the answer for one or both questions above.


\section{A variety of examples}\label{sec:examples}

\begin{table}
$\begin{array}{|l|c|c|c|c|}
\hline
&\eliahouoper&\wilfoper&\langle \text{ left generators }\rangle_c&\text{genus}\\ \hline\hline
1&-16&6512&\langle 122,214,217,219\rangle_{976}&872\\ \hline 
2&-16&15795&\langle 219,379,389,390\rangle_{1740}&1635\\ \hline 
3&-15&16602&\langle 247,373,375,390\rangle_{1728}&1634\\ \hline 
4&-14&5791&\langle 170,290,293,297\rangle_{1020}&971\\ \hline 
5&-14&5920&\langle 173,292,300,301\rangle_{1038}&989\\ \hline 
6&-12&18513&\langle 303,400,422,423\rangle_{1818}&1737\\ \hline 
7&-11&18868&\langle 273,413,424,432\rangle_{1906}&1812\\ \hline 
8&-10&9106&\langle 247,415,420,424\rangle_{1478}&1429\\ \hline 
9&-10&13190&\langle 206,308,311,327\rangle_{1440}&1345\\ \hline 
10&-10&13865&\langle 241,319,327,332\rangle_{1444}&1363\\ \hline 
11&-8&3690&\langle 121,200,203,210\rangle_{720}&671\\ \hline 
12&-8&7087&\langle 200,336,343,347\rangle_{1194}&1145\\ \hline 
13&-8&8807&\langle 239,411,412,420\rangle_{1434}&1385\\ \hline 
14&-8&12520&\langle 199,306,312,316\rangle_{1392}&1298\\ \hline 
15&-7&9152&\langle 247,423,426,427\rangle_{1481}&1432\\ \hline 
16&-7&18698&\langle 271,412,419,424\rangle_{1888}&1794\\ \hline 
17&-5&2704&\langle 97,162,166,167\rangle_{579}&530\\ \hline 
18&-5&4405&\langle 133,218,223,232\rangle_{797}&746\\ \hline 
19&-5&7651&\langle 143,215,218,226\rangle_{997}&903\\ \hline 
20&-5&17743&\langle 259,388,401,408\rangle_{1809}&1715\\ \hline 
21&-5&20440&\langle 291,436,455,459\rangle_{2026}&1932\\ \hline 
22&-4&206&\langle 34,52,55,56\rangle_{136}&117\\ \hline 
23&-4&221&\langle 35,53,56,58\rangle_{140}&121\\ \hline 
24&-3&207&\langle 34,52,55,56\rangle_{135}&116\\ \hline 
25&-3&297&\langle 40,61,64,65\rangle_{159}&140\\ \hline 
26&-3&417&\langle 48,73,78,79\rangle_{191}&172\\ \hline 
27&-2&148&\langle 30,44,48,49\rangle_{118}&99\\ \hline 
28&-2&298&\langle 40,61,64,65\rangle_{158}&139\\ \hline 
29&-1&359&\langle 44,67,71,72\rangle_{173}&154\\ \hline 
30&-1&404&\langle 47,73,76,77\rangle_{185}&166\\ \hline 
\end{array}$
\caption{Sporadic semigroups with  4 left primitives}
\label{table:sporadic-4-primitives}
\end{table}
\begin{table}
$\begin{array}{|l|c|c|c|c|}
\hline
&\eliahouoper&\wilfoper&\langle \text{ left generators }\rangle_c&\text{genus}\\ \hline\hline
1&-9&16931&\langle 310,517,518,526,532\rangle_{1841}&1765\\ \hline 
2&-6&1798&\langle 116,178,182,188,193\rangle_{464}&438\\ \hline 
3&-5&4703&\langle 248,376,381,399,402\rangle_{991}&965\\ \hline 
4&-2&1274&\langle 91,137,141,148,154\rangle_{364}&338\\ \hline 
5&-1&2023&\langle 125,195,201,202,204\rangle_{499}&473\\ \hline 
6&-1&2309&\langle 139,218,225,228,229\rangle_{551}&525\\ \hline 
\end{array}$
\caption{Sporadic semigroups with  5 left primitives}
\label{table:sporadic-5-primitives}
\end{table}
\begin{table}
$\begin{array}{|l|c|c|c|c|}
\hline
&\eliahouoper&\wilfoper&\langle \text{ left generators }\rangle_c&\text{genus}\\ \hline\hline
1&-383&117926113&\langle 4505,8798,8816\rangle_{409867}&374352\\ \hline 
2&-381&118025985&\langle 4505,8798,8816\rangle_{409955}&374410\\ \hline 
3&-220&117772232&\langle 4078,6349,6351\rangle_{358048}&308204\\ \hline
4&-167&23242633&\langle 2103,3373,3377\rangle_{136469}&117035\\ \hline
5&-132&16188078&\langle 1993,3029,3044\rangle_{107601}&94998\\ \hline 
6&-124&1727371&\langle 792,1507,1508\rangle_{30877}&27633\\ \hline 
7&-18&1936766&\langle 830,1499,1502\rangle_{32229}&28694\\ \hline 
8&-17&1881412&\langle 1034,1644,1651\rangle_{32032}&29658\\ \hline
9&-16&1916340&\langle 824,1487,1489\rangle_{31996}&28460\\ \hline
 10&-15&1996941&\langle 793,1258,1266\rangle_{30924}&26367\\ \hline 
11&-13&4347428&\langle 984,1765,1769\rangle_{53852}&44284\\ \hline 
12&-7&2019449&\langle 852,1537,1541\rangle_{33106}&29561\\ \hline
13&-5&2432560&\langle 880,1530,1533\rangle_{36032}&31608\\ \hline 
14&-4&2309256&\langle 865,1374,1381\rangle_{33658}&29135\\ \hline
15&-3&1597047&\langle 731,1317,1322\rangle_{28395}&24846\\ \hline
16&-2&2150683&\langle 822,1257,1259\rangle_{31973}&27101\\ \hline
17&-1&2132899&\langle 854,1475,1478\rangle_{33274}&29385\\ \hline 
\end{array}$
\caption{Sporadic semigroups with  3 left large primitives}
\label{table:sporadic-3large-primitives}
\end{table}
In this section we give a number of examples of numerical semigroups whose Eliahou number is negative. We group these examples into various tables.
The choice of the examples, most of them obtained in the process that lead to obtaining the families considered in the paper, deserves some comments. 
One of the aims is to furnish a source of (counter-) examples for some natural questions. Another one is to give some insight on possible answers to others. 
Others examples are probably mere curiosities, that, as frequently happens when doing experiments, may help to ask the appropriate questions. 

How general are the families we considered in the paper? 
We believe that not much: its contribution to the characterization of $\mathcal{E}$ is probably very small. 
We present examples having more than $3$ minimal left generators as well as examples where none of its pairs of minimal left generator consists of consecutive numbers. None of these examples is of the form $S^{(i,j)}(p,\tau)$.  

The tables presented in this appendix are similar to the ones that already appeared in the text, namely in Section~\ref{subsec:table-Sp} where an explanation for the meaning of the various columns is given. The only difference occurs in the first column: instead of the notation used for the semigroup, a number is used so that the examples in the table are numbered sequentially.

Several of our examples have more that $3$ minimal left generators (see Tables~\ref{table:sporadic-4-primitives} and~\ref{table:sporadic-5-primitives}. Thus we can state the following remark.
\begin{remark}\label{remark:left_gens}
There exist numerical semigroups with more than $3$ minimal left generators whose Eliahou number is negative.
\end{remark}
Our examples just involve numerical semigroups with no more than $5$ minimal left generators, but we would not be surprised if the following question has a negative answer. 
\begin{question}\label{quest:left_gens}
Is there an upper bound for the number of left primitives of the numerical semigroups in $\mathcal E$?
\end{question}


\section{Wilf number of $S^{(i,j)}(p\tau)$}\label{appendix:proof-wilf-Sijptau}
The aim of this section is to prove Theorem~\ref{th:wilf-Sijptau}, which gives a formula in terms of $i,j,p$ and $\tau$ for the Wilf number of $S^{(i,j)}(p,\tau)$.
\begin{proof}
By definition, 
\begin{align*}
\wilfoper(S^{(i,j)}(p,\tau))&=\lvert \primitivesoper(S^{(i,j)}(p,\tau))\rvert \lvert\leftsoper(S^{(i,j)}(p,\tau)) \rvert - \conductoroper(S^{(i,j)}(p,\tau)).
\end{align*}
Using Corollary~\ref{cor:LSijpr}, Lemma~\ref{lemma:PSijpr} and Proposition~\ref{prop:conductor-Sijpr}, we get
\begin{align*}
\wilfoper(S^{(i,j)}(p,\tau))&=\left(\lvert\primitivesoper(S(p,\tau))\rvert+j\frac{p}{2}\right) \left(\lvert\leftsoper(S(p,\tau))\rvert+\frac{i}{48}(p^3+12p^2+44p+48)\right)\\
&\quad\quad - \left(\conductoroper(S(p,\tau))+j\frac{p^2}{2} + i\left(\frac{p}{2}+1\right)m^{(i,j)}\right)
\end{align*}
Through small manipulations we get:
\begin{align}
\wilfoper(S^{(i,j)}(p,\tau))&=\wilfoper(S(p,\tau))\nonumber\\
&\quad\quad + \frac{i}{48}\lvert\primitivesoper(S(p,\tau))\rvert(p^3+12p^2+44p+48)\label{parcel1}\\
&\quad\quad + j\frac{p}{2}\left(\lvert\leftsoper(S(p,\tau))\rvert+\frac{i}{48}(p^3+12p^2+44p+48)\right)\label{parcel2}\\
&\quad\quad -\left(j\frac{p^2}{2} + i\left(\frac{p}{2}+1\right)\left(\frac{p^2}{4}+(\tau+j+4)\frac{p}{2}+2\right)\right)\label{parcel3}
\end{align}

Now let us look separately to each of the numbered parcels.
To deal with Parcel~(\ref{parcel1}) we need to use Remark~\ref{rem:number_primitives-Sptau}.
\begin{align}
 &\frac{i}{48}\lvert\primitivesoper(S(p,\tau))\rvert(p^3+12p^2+44p+48)\nonumber\\
&\quad\quad = \frac{i}{8\cdot 48}\left(p^2+(6 + 4\tau) p+16\right)\left(p^3+12p^2+44p+48\right)\nonumber\\
&\quad\quad =\frac{i}{8\cdot 48}\left( p^5+ (12+(6+4\tau))p^4+(60+12(6+4\tau))p^3\right)\nonumber\\
&\quad\quad  + \frac{i}{8\cdot 48}\left((5\cdot 48+44(6+4\tau))p^2+(44\cdot 16+48(6+4\tau))p+48\cdot 16\right)\nonumber\\
&\quad\quad =\frac{i}{8\cdot 48}\left( p^5+ (18+4\tau)p^4+(11\cdot 12+48\tau)p^3\right)\nonumber\\
&\quad\quad  + \frac{i}{8\cdot 48}\left((21\cdot 24+(11\cdot 16)\tau)p^2+(62\cdot 16 +48\cdot 4\tau))p+16\cdot 48\right)\nonumber\\
\nonumber
\end{align}
To deal with Parcel~(\ref{parcel2}) we need to use Remark~\ref{rem:number_lefts-Sp-Sptau} (see also Corollary~\ref{cor:number_left_elts}).
\begin{align}
&j\frac{p}{2}\left(\lvert\leftsoper(S(p,\tau))\rvert+\frac{i}{48}(p^3+12p^2+44p+48)\right)\nonumber\\
&\quad\quad = j\frac{p}{2}\left(\frac{p^3}{24}+\frac{3}{8}p^2 + \frac{13}{12}p+\frac{i}{48}(p^3+12p^2+44p+48)\right)\nonumber\\
&\quad\quad = \frac{j}{2\cdot 48}\left((i+2)p^4+(12i+ 18)p^3 + (44i+52)p^2+48ip\right)\nonumber\\
\nonumber
\end{align}
Next we rewrite Parcel~(\ref{parcel3}):
\begin{align}
&-\left(j\frac{p^2}{2} + i\left(\frac{p}{2}+1\right)\left(\frac{p^2}{4}+(\tau+j+4)\frac{p}{2}+2\right)\right)\nonumber\\
&\quad\quad = -j\frac{p^2}{2} - \frac{i}{8}\left(p+2\right)\left(p^2+(8+2(\tau+j))p+8\right)\nonumber\\
&\quad\quad = -j\frac{p^2}{2} -\frac{i}{8}\left(p^3+(10+2(\tau+j))p^2+(24+4(\tau+j))p+16\right)\nonumber
\end{align}

Joining up, we get:
\begin{align*}
\wilfoper(S^{(i,j)}(p,\tau))&=\wilfoper(S(p,\tau))\\
&+ \left(\frac{i}{8\cdot 48}      \right)p^5 \\
&+ \left(\frac{i}{8\cdot 48}(18+4\tau) + \frac{j}{2\cdot 48}(i+2) \right)p^4 \\
&+ \left(\frac{i}{8\cdot 48}(11\cdot 12+48\tau)   + \frac{j}{2\cdot 48}(12i+ 18)  -\frac{i}{8}\right)p^3 \\
&+ \left(\frac{i}{8\cdot 48}(21\cdot 24+(11\cdot 16)\tau)+ \frac{j}{2\cdot 48}(44i+52)-\frac{i}{8}(10+2\tau+2j) -\frac{j}{2}    \right)p^2 \\
&+ \left(\frac{i}{8\cdot 48} (62\cdot 16 +48\cdot 4\tau) + \frac{j}{2\cdot 48}48i -\frac{i}{8}(24+4(\tau+j))  \right)p \\
&+ \left(\frac{i}{8\cdot 48}16\cdot 48  -\frac{i}{8} 16  \right). 
\end{align*}
We proceed by slightly manipulating the right side of the above equality.
\begin{align*}
\wilfoper(S^{(i,j)}(p,\tau))&=\wilfoper(S(p,\tau))+ \left(\frac{i}{8\cdot 48}      \right)p^5 \\
&+ \left(3i/64+j/48 +   ij/96+ i\tau/96\right)p^4 \\
&+ \left(7i/32 + 3j/16+   ij/8 +i\tau/8\right)p^3 \\
&+ \left(i(21/16+11/24\tau)+ 11ij/24 + 13j/24 -5i/4  -i\tau/4-ij/4 -j/2  \right)p^2 \\
&+ i\left(31/12 +\tau/2 + j/2 -3-\tau/2-j/2  \right)p \\
&+ \left(2i  - 2i  \right).
\end{align*}
Simplifying a bit further:
\begin{align*}
\wilfoper(S^{(i,j)}(p,\tau))&=\wilfoper(S(p,\tau))+ \left(\frac{i}{8\cdot 48}      \right)p^5 \\
&=\wilfoper(S(p,\tau))+ \left(\frac{i}{8\cdot 48}      \right)p^5 \\
&+ \left(3i/64+j/48 +   ij/96+ i\tau/96\right)p^4 \\
&+ \left(7i/32 + 3j/16+   ij/8 +i\tau/8\right)p^3 \\
&+ \left( i/16 +j/24 +5ij/24 + 5i\tau/24  \right)p^2 \\
 & -\frac{5i}{12}p.
\end{align*}
Next we use Proposition~\ref{prop:wilf_holds_for_all_p_rho} to obtain the requested expression.
\begin{align*}
\wilfoper(S^{(i,j)}(p,\tau))&=\frac{p^5}{192}+\frac{5p^4}{64}+\frac{p^3}{4}-\frac{7p^2}{16}+\frac{p}{6}+ \tau\cdot\left(\frac{p^4}{48}+\frac{3}{16}p^3 + \frac{1}{24}p^2 + 1\right)\\
&+i\cdot\left(\frac{p^5}{384}+\frac{3p^4}{64}+\frac{7p^3}{32}+\frac{p^2}{16}-\frac{5p}{12}\right) +  j\cdot\left(\frac{p^4}{48}+\frac{3p^3}{16}+\frac{p^2}{24}\right)\\
&+ij\cdot\left(\frac{p^4}{96}+\frac{p^3}{8}+\frac{5p^2}{24}\right) +  i\tau\cdot\left(\frac{p^4}{96}+\frac{p^3}{8}+\frac{5p^2}{24}\right).\qedhere
\end{align*}
\end{proof}

\subsection*{Acknowledgments} 
Besides the formal acknowledgement to the Centre for Mathematics of the University of Porto (CMUP) made somewhere in this paper, I would like to thank CMUP for providing me the computational tools necessary to this kind of works.\\
I would like also to thank Shalom Eliahou for his suggestions and comments on preliminary versions. Special thanks to Shalom for encouraging me to publish this work.

\end{document}